\newcommand{\comment}[1]{}
\newcommand{\crb}{\mathcal{V}}
\newcommand{\C}{\mathbb{C}}
\newcommand{\Rn}{\mathbb{R}^n}
\newcommand{\R}{\mathbb{R}}
\newcommand{\N}{\mathbb{N}}
\newcommand{\dop}[1]{\frac{\partial}{\partial #1}}
\newcommand{\dopt}[2]{\frac{\partial #1}{\partial #2}}
\newcommand{\fpstwo}[2]{#1[[#2]]}
\newcommand{\todo}[1]{}
\newlength{\extendaxesby}\setlength{\extendaxesby}{.4cm}
\newcommand{\cinfty}{\mathcal{C}^\infty}
\newcommand{\diffable}[1]{\mathcal{C}^{#1}}
\newcommand{\ssheaf}{\mathcal{S}}
\newcommand{\esheaf}{\mathcal{E}}
\newcommand{\holforms}{\mathrm{T}'}
\newcommand{\charforms}{\mathrm{T}^0}
\newcommand{\holsheaf}{\mathcal{T}'}
\newcommand{\charsheaf}{\mathcal{T}^0}
\newcommand{\vectorfields}{\mathfrak{X}}
\newcommand{\distributions}{\mathcal{D}'}
\DeclareMathOperator{\singsupp}{sing supp}
\DeclareMathOperator{\ann}{ann}
\DeclareMathOperator{\rank}{rk}
\DeclareMathOperator{\imag}{Im}
\DeclareMathOperator{\real}{Re}
\DeclareMathOperator{\Hom}{Hom}
\DeclareMathOperator{\ourL}{L}
\DeclareMathOperator{\ourd}{d}
\renewcommand{\epsilon}{\varepsilon}
\renewcommand{\Re}{\mathrm{Re}\,}
\renewcommand{\Im}{\mathrm{Im}\,}
\theoremstyle{definition}
\newtheorem{defn}{Definition}[section]
\theoremstyle{remark}
\newtheorem{exa}[defn]{Example}
\newtheorem{rmk}[defn]{Remark}
\theoremstyle{plain}
\newtheorem{prop}[defn]{Proposition}
\newtheorem{cor}[defn]{Corollary}
\newtheorem{thm}[defn]{Theorem}
\newtheorem{lem}[defn]{Lemma}
\title[Regularity of infinitesimal automorphisms of involutive structures]{Regularity of infinitesimal automorphisms \\of involutive structures}
\author{Bernhard Lamel}\email{bernhard.lamel@univie.ac.at}
\author{Nicholas Braun Rodrigues}
\subjclass{32V05,35A30,58A30}
\begin{document}
\maketitle

\begin{abstract}
  In this paper, we prove that infinitesimal automorphisms of an involutive structure are
  smooth. For this, we build a regularity theory
  for sections of vector bundles over an involutive structure $(M,\crb)$ endowed
  with a connection compatible with $\crb$, which we call $\crb$-connection. We show that
  $\crb$-sections, i.e. sections which are parallel with respect to $\crb$ under
  the $\crb$-connection, satisfy an analogue of Hans Lewy's theorem as formulated
  for CR functions on an abstract CR manifold by Berhanu and Xiao, and introduce
  certain (generically satisfied) nondegeneracy conditions ensuring their smoothness. 
\end{abstract}

%%%%%%%%%%%%%%%%%%%%%%%%%%%%%%%
%%%%%%%%%%%%%%%%%%%%%%%%%%%%%%%
%
\section{Introduction}\label{sec:intro}
%
%%%%%%%%%%%%%%%%%%%%%%%%%%%%%%%
%%%%%%%%%%%%%%%%%%%%%%%%%%%%%%%

The purpose of this paper is twofold: First, we study microlocal regularity of certain vector-valued analogues
of solutions of involutive structures and second, we apply this to the study of regularity
of infinitesimal automorphisms of involutive structures. 

Historically, our first topic reaches back to Hans Lewy's
discovery of ``atypical'' partial differential equations and the related
regularity properties in \cite{lewy56}. Atypical refers to the fact
that those equations do not possess any solutions, and Lewy connected this
to the fact that solutions of the associated homogeneous equation can
be interpreted as boundary values of {\em holomorphic} functions, thus being
far from arbitrary. The close relationship between PDE and several complex variables
thus had one more rich facet, giving rise to the theory of involutive and CR structures
in particular, and one of the longstanding problems in CR geometry is to understand the microlocal
regularity of CR functions.

For embedded (i.e. locally integrable) CR manifolds, many authors
have pursued the problem of holomorphic extendability (or microlocal
hypo-analyticity) of CR functions. Lewy's result can be rephrased in
more general terms by saying that the {\em hypo-analytic wave front set} of
a CR function cannot include any non-zero eigenvectors of positive (or negative,
depending on convention) eigenvalues of the Levi form. It turns out that holomorphic
extendability to a wedge is characterized by an even weaker condition, namely
{\em minimality} of a CR structure, by work of Tumanov \cite{tumanov1989} and Baouendi and Rothschild \cite{baouendi1988},
based on the method of analytic discs. Another approach, introduced by  Baouendi, Chang and Treves \cite{baouendi1983} was based
on a variant of the FBI transform.

When dealing with abstract CR or more generally involutive structures, much less is known.
Berhanu and Xiao proved the abstract CR analogue of Hans Lewy's theorem in \cite[Theorem 2.9]{berhanu2015}
and applied it to the study of regularity of mappings from an abstract CR structure to a strictly
pseudoconvex hypersurface; embeddability of the target is a key property utilized in the proof,
as a CR map consists of CR functions (i.e. solutions) in this setting. If one studies
{\em abstract} CR structures, one can prove regularity of infinitesimal automorpisms
of nondegenerate CR structures with very low regularity
assumptions, such as in F\"urd\"os and Lamel \cite{Furdos:2016vu} if they
have the ``microlocal extension property'', as defined in a paper of Lamel and Mir \cite{MR4713116}.
In this approach, one interprets infinitesimal automorphisms as sections of appropriate
vector bundles which are parallel with respect to a canonical partial connection. Our first
result bridges the gap left in these papers by the absence of a vector-valued
analogue of Hans Lewy's theorem and gives a simple condition which ensures the microlocal
extension property for sections which are solutions (we refer to these as $\crb$-sections) holds. 

\begin{thm}\label{thm:levi_microlocal_regularity}
  Let $\Omega$ be a $\mathcal{C}^\infty$-smooth manifold endowed with
  an involutive structure $\mathcal{V}$, and let $E$ be a
  $\mathcal{V}$-bundle over $\Omega$. Let $p \in \Omega$ and
  $\sigma \in \mathrm{T}^0_p$, and assume that the Levi form in
  $\sigma$ has one negative eigenvalue. Then for any 
  distributional $\crb$-section  $u$ defined on some open neighborhood of $p$,
  we have $\sigma \notin WF(u)$.
\end{thm}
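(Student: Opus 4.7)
The plan is to adapt the FBI-transform proof of Berhanu and Xiao \cite[Theorem 2.9]{berhanu2015} from the scalar setting (CR functions, or distributional solutions of $Lu=0$ for all $L\in\crb$) to the vector-valued setting of $\crb$-sections. The essential observation is that the parallelism condition produces a first-order system whose principal part is scalar and given by the involutive vector fields, with only smooth, zeroth-order coupling between the components.

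First I would localize. Pick local coordinates on $\Omega$ near $p$ together with a smooth frame $L_1,\dots,L_n$ for $\crb$ adapted to the covector $\sigma$, and trivialize $E$ near $p$ by a smooth local frame $e_1,\dots,e_r$. Writing $u=\sum_i u_i e_i$, the condition $\nabla_L u=0$ for $L\in\crb$ becomes the system
\[
L_j u_i \;=\; -\sum_{k=1}^{r} (A_j)_{ik}\,u_k,\qquad j=1,\dots,n,\ i=1,\dots,r,
\]
where the matrix coefficients $(A_j)_{ik}$ are smooth and encode the $\crb$-connection. The coupling between components is thus algebraic in $u$ and smooth in the base.

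Next I would run the FBI transform argument of Berhanu and Xiao, adapted to the covector $\sigma$ and to the involutive structure $\crb$, applied componentwise to $u_1,\dots,u_r$. In the scalar case the proof hinges on an integration by parts against the $L_j$ with a phase chosen so that the Levi form at $\sigma$ appears in the real part of the exponent; a negative eigenvalue then produces exponential decay of the FBI transform in a conic neighborhood of $\sigma$. In the present setting the same integration by parts reproduces the scalar estimate, plus a perturbative term obtained by substituting $L_j u_i = -\sum_k(A_j)_{ik}u_k$. Because this perturbation contains no derivatives of $u$ and the $A_j$ are smooth with bounded derivatives on a compact neighborhood of $p$, it is dominated pointwise by a constant times $|u|$.

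This leaves a coupled system of scalar inequalities for the FBI transforms $F_1,\dots,F_r$ of the components of $u$, roughly of the form ``exponentially decaying principal term plus bounded linear coupling among the $F_k$''. The main obstacle, and the technical heart of the argument, is to close this bootstrap: one must show that the vector $F=(F_1,\dots,F_r)$ itself decays exponentially in the relevant FBI parameter. I would handle this by a Gr\"onwall-type estimate in the radial FBI variable, exploiting the fact that the exponential rate furnished by the Levi form is strictly positive and dominates any polynomial factor produced by the coupling matrices $A_j$. Once exponential decay of $F$ in a conic neighborhood of $\sigma$ is established, the standard FBI characterization of the wave front set of a distribution immediately yields $\sigma\notin WF(u)$, finishing the proof.
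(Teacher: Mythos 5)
The localization step is correct and matches the paper: in suitable coordinates and with a local frame of $E$ the parallelism condition reads $L_j u_i = -\sum_k (A_j)_{ik}u_k$ with $A_j$ smooth, and after rectifying $\Re L$ for one $L$ adapted to $\sigma$ one is reduced to a single vector field $L = \partial_t + i b(x,t)\cdot\partial_x$ plus a zeroth-order matrix term. The divergence is in what you do next, and I think there is a real gap there.

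Your plan is to run the FBI argument componentwise, accept a ``bounded linear coupling among the $F_k$'' and close it with a Gr\"onwall-type estimate in the FBI radial variable. As written this does not close, because the quantities that appear after the Stokes/integration-by-parts step are not the FBI transforms $F_k = \mathfrak{F}_\kappa[\chi u_{k,0}]$ of the traces at $s=0$. The Baouendi--Chang--Treves machinery forces you into an auxiliary $s$-variable: you work with an extension $u(x,t,s)$, apply Stokes in $V\times[0,\gamma]$, and the interior term contains $\int_0^\gamma \int \chi\,(L^\sharp u_i)(x',t',s)\,e^{Q(\cdot,s)}\,dZ\,ds$. Substituting the system gives a term of the form $\int_0^\gamma \int \chi\, A_{ik}(x',t')\,u_k(x',t',s)\,e^{Q}\,dZ\,ds$, which is a weighted FBI-like integral of $u_k(\cdot,\cdot,s)$ at \emph{positive} $s$, not $F_k$. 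There is no a priori control of $u_k(\cdot,\cdot,s)$ in terms of the trace $u_{k,0}$ that would turn this into a Volterra inequality in the FBI variable, and the phase gain $\Re Q \lesssim -\rho s^2|\xi|$ only helps if the integrand is already flat in $s$; here it is merely bounded. So the ``bootstrap'' you flag as the technical heart is exactly the missing step, and the route through a pointwise bound $|L^\sharp u|\lesssim |u|$ gives no gain that a Gr\"onwall iteration can exploit.

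The paper avoids the coupling altogether. For the conjugated operator $D_1 = \partial_s + i(L + A)$ it first manufactures, by a Borel resummation (their Lemma on $s$-approximate solutions), a smooth frame $\omega^1,\dots,\omega^r$ on $V\times]-\delta,\delta[$ with $\omega^j|_{s=0}=e^j$ and $D_1\omega^j$ flat at $s=0$. Writing $\sigma = \sigma_j M^j_\ell\,\omega^\ell$ (with $M = (T^{-1})$ from the change of frame), the new components $\sigma_j M^j_\ell$ satisfy a \emph{scalar} equation $L_1(\sigma_j M^j_\ell) = -\sigma_j M^j_k\, D^k_\ell(L_1)$ whose coefficient matrix is flat in $s$ by construction, and they restrict to $\sigma_\ell$ at $s=0$. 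This reduces the matrix problem to $r$ independent applications of the scalar microlocal regularity theorem for $s$-approximate solutions. In effect the paper diagonalizes the connection to infinite order in $s$ before doing analysis, rather than doing analysis and then trying to undo the coupling. If you want to salvage your route, you would need to construct the comparison quantities at positive $s$ explicitly and prove a genuine decay estimate for them; at that point you would essentially be rebuilding the approximate-solution frame.
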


We note here the important corollary in the CR case:

\begin{cor} Assume that $M$ is an abstract Levi-nonflat CR manifold. Then $M$ has
  the microlocal extension property for sections of CR bundles over $M$. In particular,
  if $M$ is in addition finitely nondegenerate, then every
  bounded infinitesimal automorphism of $M$ is smooth. 
\end{cor}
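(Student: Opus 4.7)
The corollary is obtained by applying Theorem~\ref{thm:levi_microlocal_regularity} on top of two structural features specific to the CR case.

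First I would verify the microlocal extension property. Fix $p \in M$. In the CR setting the Levi form $\mathcal{L}_\sigma$ at a characteristic covector $\sigma \in \mathrm{T}^0_p$ is a genuine Hermitian form, and it depends $\mathbb{R}$-linearly on $\sigma$. In particular $\mathcal{L}_{-\sigma} = -\mathcal{L}_\sigma$, so negating a characteristic direction negates all eigenvalues. Under the Levi-nonflat hypothesis, for each $p$ there exists $\sigma \in \mathrm{T}^0_p$ with $\mathcal{L}_\sigma \ne 0$; after possibly replacing $\sigma$ by $-\sigma$, I may arrange that $\mathcal{L}_\sigma$ has a negative eigenvalue. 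Applying Theorem~\ref{thm:levi_microlocal_regularity} to any distributional $\crb$-section $u$ of a CR bundle then gives $\sigma \notin WF(u)$. This is precisely the microlocal extension property at $p$ in the sense of Lamel--Mir \cite{MR4713116}.

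For the second assertion, the plan is to follow the scheme of F\"urd\"os--Lamel \cite{Furdos:2016vu}: a bounded infinitesimal CR automorphism $X$ corresponds to a bounded $\crb$-section of a suitable jet bundle equipped with its canonical $\crb$-connection, which in the present setup is a CR bundle in the sense developed earlier in the paper. The finite nondegeneracy assumption of order $k$ is exactly the algebraic condition guaranteeing that $X$ is determined by its $\crb$-jet of order $k$, so smoothness of $X$ reduces to smoothness of the corresponding jet-valued $\crb$-section.

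The main step, and the place I expect the real work to live, is upgrading the microlocal extension property from the first part into smoothness of this bounded $\crb$-section. Here I would invoke the bootstrap of \cite{MR4713116}, whose precise input hypotheses are microlocal extendability together with finite nondegeneracy: one iterates hypo-analytic extension into a wedge with the algebraic reconstruction coming from finite nondegeneracy, propagating regularity up through the chain of jet orders until the $0$-jet, and hence $X$ itself, is smooth. The main obstacle is essentially bookkeeping: one must check that the jet bundles encoding $X$ indeed fit into the $\crb$-connection framework of this paper uniformly in the jet order, so that Theorem~\ref{thm:levi_microlocal_regularity} is applicable at every step of the induction.
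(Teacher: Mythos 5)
Your proposal is essentially correct and matches the paper's (implicit) reasoning: the paper states this corollary immediately after Theorem~\ref{thm:levi_microlocal_regularity} without proof, relying precisely on the $\mathbb{R}$-linearity observation $\mathfrak{L}^{-\sigma}_p = -\mathfrak{L}^{\sigma}_p$ to upgrade Levi-nonflatness to the ``one negative eigenvalue'' hypothesis, and on \cite{Furdos:2016vu} for the second assertion. One small correction: the bootstrap combining microlocal extendability with finite nondegeneracy to conclude smoothness of infinitesimal automorphisms is the content of Fürdős--Lamel \cite{Furdos:2016vu}, whereas \cite{MR4713116} supplies the definition of the microlocal extension property for sections of CR bundles; you attribute the bootstrap to the latter, which inverts the roles the paper assigns to those two references.
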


 The novelty here is that instead of a homogeneous, purely
first order partial differential equation we have a first order term
that involves all the components of the section, \textit{i.e.} the
equation that a $\crb$-section $u$ satisfies (locally) is of the kind
\[
  L u + A u = 0,
\]
where $L$ is a section of $\crb$, and $A$ is an $r\times r$ matrix with
$\mathcal{C}^\infty$-smooth coefficients, and $r$ is the complex
dimension of the $\crb$-bundle.

Our second main topic is the regularity of infinitesimal automorphisms of involutive structures, which
seems to be a completely open problem. One of the issues that arises in the non-integrable setting
is that it becomes hard to interpret components of a map as solutions to a PDE. We show
how to overcome this problem under certain natural conditions by interpreting an infinitesimal
automorphism $X$ as a section $X^\sharp$ of the ``bidual'' $(T'M)^* \subset (\C T^*M)^*$. This comes with several hitherto
unexplored problems, as we have to impose certain conditions which imply that a vector field
is actually determined by this associated section $X^\sharp$, which is automatically true in the CR setting. The
conditions we introduce are  sufficient to ensure  that any locally bounded infinitesimal automorphism
of a smooth involutive structure satisfying these conditions is smooth, and we show by examples that
generically they are also necessary. One of them is called {\em nondegeneracy} and introduced in
Defintion~\ref{def:degeneracy}. This condition yields exactly the finite nondegeneracy condition introduced 
in the CR setting by Baouendi, Huang, and Rothschild \cite{baouendi1996}, but at the level of generality here, relies on the $\crb$-module calculus introduced in Section~\ref{sec:cr_modules}
below; we can also reduce the nondegeneracy assumption to include certain mild degeneracies, which we call ``normal crossing degeneracy locus''. The second condition we  call ``normal crossing exceptional locus''
and it ensures that the smoothness of $X^\sharp$ is actually related to the smoothness of $X$. It
is introduced in  Definition~\ref{def:normalcrossings} below. This condition is automatically
satisfied by CR structures. 

We note that both of these conditions are generically {\em necessary} in the sense that
if they are not fulfilled on an open subset, we give examples of nonsmooth infinitesimal
automorphisms. 

\begin{thm}\label{thm:mainautos}
  Assume that $(M,\crb)$ is an involutive structure, $p\in M$. If $M$ is 
  nondegenerate at $p$ and possesses normal crossing exceptional locus,
  then every bounded infinitesimal automorphism $X$ near $p$ which extends
  microlocally is smooth near $p$. 
\end{thm}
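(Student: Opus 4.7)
The plan is to follow the strategy outlined in the introduction: first, translate the problem about the vector field $X$ into one about the associated section $X^\sharp$ of the bidual $(T'M)^*$; then combine the assumed microlocal extension with Theorem~\ref{thm:levi_microlocal_regularity} and the nondegeneracy hypothesis to deduce smoothness of $X^\sharp$; and finally use the normal crossing exceptional locus to transfer this smoothness back to $X$ itself.

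First I would set up the bidual section. Given a locally bounded infinitesimal automorphism $X$ near $p$, one defines $X^\sharp \in \Gamma((T'M)^*)$ by pairing $X$ with characteristic covectors. A short computation using the $\crb$-module calculus of Section~\ref{sec:cr_modules} and the fact that $[X,\crb] \subset \crb$ shows that $X^\sharp$ is a $\crb$-section of the $\crb$-bundle $(T'M)^*$, so that in a local frame it satisfies a system of the form $Lu + Au = 0$ with $L$ a section of $\crb$ and $A$ a smooth matrix. Local boundedness of $X$ yields boundedness, and in particular the distributional property, of $X^\sharp$. The microlocal extension hypothesis on $X$ translates into the statement that $WF(X^\sharp)$ is contained in a one-sided cone over each fiber of the characteristic bundle $\mathrm{T}^0$.

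Next I would apply Theorem~\ref{thm:levi_microlocal_regularity} to $X^\sharp$ at $p$: any characteristic covector $\sigma \in \mathrm{T}^0_p$ at which the Levi form has an eigenvalue of sign opposite to that allowed by the one-sided wave front is excluded from $WF(X^\sharp)$. Combined with one-sidedness, this forces the remaining wave-front directions to lie in the null locus of the Levi form. The nondegeneracy condition of Definition~\ref{def:degeneracy} would then be used to bootstrap this partial regularity into full smoothness: by iteratively applying $\crb$-derivatives to $Lu + Au = 0$, and exploiting the fact (asserted by the authors) that the condition recovers the BHR finite nondegeneracy in the CR case, one expresses higher jets of $X^\sharp$ in terms of data that lie in the permissible half-space; an iterated reflection/FBI argument along the lines of Baouendi-Huang-Rothschild and Berhanu-Xiao then upgrades $X^\sharp$ to a smooth section near $p$.

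Finally, I would apply the normal crossing exceptional locus hypothesis from Definition~\ref{def:normalcrossings}: by its very design, this condition ensures that the assignment $X \mapsto X^\sharp$ admits a smooth inverse even across the exceptional locus where the pairing degenerates in a controlled (normal crossing) fashion, so smoothness of $X^\sharp$ implies smoothness of $X$. I expect the main obstacle to be the bootstrap step in the preceding paragraph: in the matrix-valued, non-integrable setting one cannot directly invoke the classical BHR reflection, and the $\crb$-connection couples components when iterating, so the combinatorics of the reflection identities must be recast carefully inside the $\crb$-module calculus. The final transfer from $X^\sharp$ to $X$, while conceptually secondary, also requires care because one only controls $X^\sharp$ modulo the degeneracy ideal, and extracting a genuinely smooth representative of $X$ is precisely what the normal crossing hypothesis is built to guarantee.
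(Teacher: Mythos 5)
Your overall skeleton (pass to $X^\sharp$, get smoothness of $X^\sharp$ in characteristic directions, bootstrap via nondegeneracy, transfer back to $X$ via normal crossings) matches the paper's, but the crucial middle step is approached in the wrong way, and this is a genuine gap, not just a presentational difference.

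The paper does \emph{not} run any iterated reflection or FBI argument to exploit nondegeneracy. Instead it uses the $\crb$-module calculus of Section~\ref{sec:cr_modules} in a purely algebraic way: for a $\crb$-module $\Omega$ of generalized sections, the module of smoothness multipliers $\ssheaf(\Omega)$ is itself a $\crb$-module (Lemma~\ref{lem:smoothnessV}, one line of computation with $D^*_L$). Once one knows that every characteristic form $\theta$ satisfies $\theta(X^\sharp)\in\cinfty$, i.e.\ $\charsheaf\subset\ssheaf(X^\sharp)$, the $\crb$-module property immediately forces $\hat\charsheaf\subset\ssheaf(X^\sharp)$, and nondegeneracy is \emph{defined} to mean $\hat\charsheaf=\holsheaf$, so $X^\sharp$ is smooth outright (this is Proposition~\ref{prop:nondegenerate}). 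The ``iterated reflection / BHR'' machinery you propose is precisely what this soft argument is designed to avoid; as you yourself observe, it is unclear how to make the reflection identities work in the matrix-valued non-integrable setting, and the paper sidesteps that entirely. You already have all the ingredients — $X^\sharp$ is a $\crb$-section, the connection couples the components, and the $\crb$-module formalism is set up to absorb exactly that coupling — you just need to invoke Lemma~\ref{lem:smoothnessV} rather than an analytic bootstrap.

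A secondary issue: you invoke Theorem~\ref{thm:levi_microlocal_regularity} inside the proof. But Theorem~\ref{thm:mainautos} already \emph{assumes} that $X$ extends microlocally, so the first step of the argument — that $\theta(X^\sharp)$ is smooth for $\theta\in\charsheaf$ — follows directly from the one-sided wave front set combined with the realness of $\theta(X^\sharp)$ ($\imag X^\sharp(\theta)=0$ since $X$ is a real vector field and $\theta$ a real form); no Levi-form information is needed. Theorem~\ref{thm:levi_microlocal_regularity} only enters when one wants to \emph{derive} the microlocal extension from Levi non-flatness, which is the content of Corollary~\ref{cor:mainautos}, not of Theorem~\ref{thm:mainautos}. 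Your final paragraph on passing from $X^\sharp$ to $X$ via normal crossings is essentially correct and matches Lemma~\ref{lem:smoothness}.
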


Combining Theorem~\ref{thm:levi_microlocal_regularity} and Theorem~\ref{thm:mainautos} yields the following powerful
corollary.
\begin{cor}\label{cor:mainautos}
  Assume that $(M,\crb)$ is a Levi-nonflat involutive structure, $p\in M$. If $M$ is 
  nondegenerate at $p$ and possesses normal crossing exceptional locus,
  then every bounded infinitesimal automorphism $X$ near $p$  is smooth near $p$. 
\end{cor}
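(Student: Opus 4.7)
The plan is to derive Corollary~\ref{cor:mainautos} as a direct consequence of the two main theorems, with the interpretation of the infinitesimal automorphism $X$ as a $\crb$-section $X^\sharp$ of the bidual bundle $(T'M)^*\subset (\C T^*M)^*$ serving as the bridge. Let $X$ be a bounded infinitesimal automorphism on a neighborhood of $p$. The first step is to observe that $X^\sharp$ is a bounded distributional section of the $\crb$-bundle $(T'M)^*$, which is parallel with respect to the canonical partial $\crb$-connection coming from the involutivity of $\crb$ (so $X^\sharp$ is, locally, a solution of a system $Lu+Au=0$ as in the discussion following Theorem~\ref{thm:levi_microlocal_regularity}). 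Thus $X^\sharp$ is a $\crb$-section to which Theorem~\ref{thm:levi_microlocal_regularity} can be applied.

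Next, I would invoke the Levi-nonflatness hypothesis to produce, at every point near $p$, a characteristic covector $\sigma\in \mathrm{T}^0$ at which the Levi form has a negative eigenvalue (applying this also to $\overline{\crb}$ to handle positive eigenvalues, since the real structure exchanges signs of the Levi form). Theorem~\ref{thm:levi_microlocal_regularity} then asserts $\sigma \notin WF(X^\sharp)$. In other words, Theorem~\ref{thm:levi_microlocal_regularity} supplies precisely what is required for $X^\sharp$, and hence $X$, to ``extend microlocally'' in the sense used as a hypothesis in Theorem~\ref{thm:mainautos}.

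Having secured the microlocal extension hypothesis, the remaining step is to feed $X$ into Theorem~\ref{thm:mainautos}. By assumption, $M$ is nondegenerate at $p$ and has normal crossing exceptional locus, and we have just shown that the bounded infinitesimal automorphism $X$ extends microlocally. Theorem~\ref{thm:mainautos} then directly yields that $X$ is smooth near $p$.

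I expect the only real subtlety to be matching the conclusion of Theorem~\ref{thm:levi_microlocal_regularity} to the precise definition of ``extends microlocally'' that is fed into Theorem~\ref{thm:mainautos}: one needs to check that the wave-front exclusion provided at each point of a neighborhood is enough for the microlocal extension property as defined in \cite{MR4713116} (and used in \cite{Furdos:2016vu}) to hold for $X^\sharp$. This is essentially a bookkeeping step using the Levi-nonflatness to produce sufficient covectors outside $WF(X^\sharp)$, but it is the single place where a routine verification is needed; everything else is a formal combination of the two theorems.
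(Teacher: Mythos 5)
Your proposal is correct and matches the paper's own (very terse) argument: the paper states the proof of \cref{cor:mainautos} is ``immediate'' from combining \cref{thm:levi_microlocal_regularity} and \cref{thm:mainautos}, via the passage $X\mapsto X^\sharp$ and the observation that Levi-nonflatness supplies the microlocal extension hypothesis. One small imprecision worth fixing: to handle characteristic covectors $\sigma$ at which the Levi form is positive semidefinite but nonzero, one replaces $\sigma$ by $-\sigma$ (which reverses the sign of $\mathfrak{L}^\sigma$) rather than passing to $\overline{\crb}$; this is the standard bookkeeping behind why Levi-nonflatness gives the microlocal extension property.
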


Let us again outline the main difficulties we encounter in the involutive setting. 
One great
advantage in the CR case is that one
studies hypo-analytic (microlocal) regularity (or holomorphic
extendability) which turns out to reduce to the study of  the trace of CR functions
to maximally real submanifolds, because of the Baouendi-Treves
approximation formula. The same type of  argument fails for
$\mathcal{C}^\infty$-smooth (microlocal) regularity, and this might be
the source of the difficulty for obtaining microlocal regularity
results for CR functions on the abstract setting. Also, in the
abstract case, there is no notion of holomorphic extendability, so we
do not benefit from the theory of holomorphic functions anymore.
Therefore, moving from  the CR case to the one of general
involutive structures (see section \ref{sec:involutive} for more
details) is a rather big step.

%%%%%%%%%%%%%%%%%%%%%%%%%%%%%%%
%%%%%%%%%%%%%%%%%%%%%%%%%%%%%%%
%
\section{Involutive structures}\label{sec:involutive}
%
%%%%%%%%%%%%%%%%%%%%%%%%%%%%%%%
%%%%%%%%%%%%%%%%%%%%%%%%%%%%%%%
In this section we shall recall some basic definitions and facts about
involutive structures, and we refer to Chapter I of \cite{cordarobook}
for more details.

Let $M$ be a $\mathcal{C}^\infty$-smooth, $n+m$-dimensional
manifold. We say that a subbundle
$\mathcal{V} \subset \mathbb{C} TM$ defines an \emph{involutive
  structure} on $M$ if
$[\mathcal{V},\mathcal{V}] \subset \mathcal{V}$, \textit{i.e.} for
every open set $U \subset M$, if
$L_1,L_2 \in \Gamma(U, \mathcal{V})$ then
$[L_1, L_1] \in \Gamma(U, \mathcal{V})$. For a given involutive
structure $\mathcal{V} \subset \mathbb{C}T M$, we define the
bundle $\mathrm{T}^\prime$ (or $\mathcal{V}^\perp)$ as the subbundle
of $\mathbb{C}T^\ast M$ given by the one-forms that annihilate
every section of $\mathcal{V}$, \textit{i.e.} for every open set
$U \subset M$, a one-form
$\omega \in \Gamma(U, \mathbb{C} T^\ast M)$ is a section of
$\mathrm{T}^\prime$ if and only if
\begin{equation*}
  \omega(L) = 0, \qquad \forall L \in \Gamma(U, \mathcal{V}).
\end{equation*}
We set the characteristic set of $\mathcal{V}$ as
$\mathrm{T}^0 = \mathrm{T}^\prime \cap T^\ast M$, and write $\holforms(M)$
resp. $\charforms{M}$ if we need to refer to the structure. We note that in
general $\mathrm{T}^0$ is not a bundle, but at least
$\dim \mathrm{T}^0$ is upper semicontinuous. We use the convention
that $m=\dim \mathcal{V}$ and $n= \dim T'$. A number of involutive
structures are of special importance, and have been given names; we
say that $\mathcal{V}$ is:
\begin{itemize}
\item \emph{elliptic} if $\mathrm{T}^0_p = 0$ for every
  $p \in M$;
\item \emph{complex} if
  $\mathcal{V}_p \oplus \overline{\mathcal{V}}_p =
  \mathbb{C}T_pM$ for every $p \in M$;
\item \emph{CR} if $\mathcal{V}_p \cap \overline{\mathcal{V}}_p = 0$
  for every $p \in M$;
\item \emph{real} if $\mathcal{V}_p =\overline{\mathcal{V}}_p$ for
  every $p \in M$.
\end{itemize}
A function (or distribution) $u$ defined in some open set
$U \subset M$ is said to be a \emph{solution} if $Lu = 0$ for
every $L \in \Gamma(U,\mathcal{V})$ (if $\mathcal{V}$ is CR then we
call $u$ a CR function). Let $p \in M$, and suppose that
$\mathrm{dim} \mathrm{T}^0_p = d$ and take $\theta_1, \dots, \theta_d$ real forms in an open neighborhood of $p$ such that $\{ \theta_1 (p), \dots, \theta_d (p)\}$ span $\mathrm{T}^0_p$.

%\footnote{I have changed that - on a neigbourhood can't be, right?}
Then
there exists a local coordinate system vanishing at $p$,
$(U, x_1, \dots, x_\nu, y_1, \dots, y_\nu, s_1, \dots, s_d, t_1,
\dots, t_\mu)$, and one-forms
$\omega_1, \dots, \omega_\nu$ such that,
writing $z_j = x_j + iy_j$ for $j =1, \dots, \nu$, we have that
$\{\omega_1, \dots, \omega_\nu, \theta_1, \dots, \theta_d\}$ spanns
$\Gamma(U, \mathrm{T}^\prime)$, and
\begin{equation*}
  \begin{cases}
    \mathrm{d}z_j\big|_p = \omega_j \big|_p, \quad j = 1, \dots, \nu;\\
    \mathrm{d}s_k \big|_p = \theta_k\big|_p, \quad k = 1, \dots, d.
  \end{cases}
\end{equation*}
Furthermore, there exists complex vector fields
$L_1, \dots, L_{\nu + \mu}$ spanning $\Gamma(U, \mathcal{V})$ of the
form
\begin{align*}
  &L_{j} = \frac{\partial}{\partial \bar{z}_j} + \sum_{\ell = 1}^\nu a_{j\ell} \frac{\partial}{ \partial z_\ell} + \sum_{k =1}^d b_{jk}\frac{\partial}{\partial s_k}, \qquad j = 1, \dots, \nu;\\
  &L_{j} = \frac{\partial}{\partial t_{j-\nu}} + \sum_{\ell = 1}^\nu a_{j\ell} \frac{\partial}{ \partial z_\ell} + \sum_{k =1}^d b_{jk}\frac{\partial}{\partial s_k}, \qquad j = \nu+1, \dots, \nu+\mu,
\end{align*}
where the coefficients $a_{j\ell}$ and $b_{jk}$ are smooth on $U$ and
they vanish at $p$. If $\mathcal{V}$ is CR, then we can take
$\mu = 0$, and if $\mathcal{V}$ is elliptic, $d = 0$. An important
Hermitian form on $\mathcal{V}$ is the so-called the \emph{Levi form}:
For $p \in M$ and $\xi \in \mathrm{T}^0_p$, $\xi \neq 0$, the
Levi form of $\mathcal{V}$ in $\xi$ is the Hermitian form given by
\begin{equation*}
  \mathfrak{L}^\xi_p(v,w) =
  \frac{1}{2i}\xi   \left(\left[\,  L, \overline{K}\, \right]\right), \quad \forall v,w \in \mathcal{V}_p,
\end{equation*}
where $L$ and $K$ are smooth sections of $\mathcal{V}$ on an open
neighborhood of $p$ such that $L_p = v$ and $K_p = w$.
An important class of involutive structures are the so called
\emph{locally integrable structures}. We say that an involutive
structure $\mathcal{V}$ is locally integrable (and call it a locally
integrable structure of rank $n$) on $M$ if, for every
$p \in M$, there exists an open neighborhood $U$ of $p$ and
complex-valued smooth functions $Z_1, \dots, Z_m$ (sometimes called
\emph{first integrals}) defined on $U$, such that
\begin{equation*}
  \begin{cases}
    \mathrm{d}Z_1 \wedge \cdots \wedge \mathrm{d} Z_m \neq 0, \, \text{on}\, U;\\
    \mathrm{d}Z_j (L) = 0,\quad \forall L \in \Gamma(U, \mathcal{V}), \, j = 1, \dots, m.
  \end{cases}
\end{equation*}
In this case we have a better local description of the sections of
$\mathcal{V}$. Let $p \in M$ be arbitrary, and let
$d = \text{dim} \mathrm{T}^0_p$, then there exist a local coordinate
system $(U, x_1, \dots, x_\nu, y_1, \dots, y_\nu, s_1, \dots, s_d, t_1,
\dots, t_\mu)$ vanishing at $p$ and a map $\phi : U \longrightarrow \mathbb{R}^m$,
with $\phi(0) = 0$ and $\mathrm{d}\phi(0) = 0$, such that the
differentials of the functions
\begin{align*}
  &Z_k(x,y,s,t) = x_k + iy_k, \quad k = 1, \dots, \nu;\\
  &W_k(x,y,s,t) = s_k + i\phi_k (x,y,s,t), \quad k =1, \dots, d,
\end{align*}
span $\mathrm{T}^\prime$ on $U$. Thus the following complex vector
fields span $\mathcal{V}$ on $U$:
\begin{align*}
  &L_j = \frac{\partial}{\partial \bar{z}_j} - i \sum_{k=1}^d \frac{\partial \phi_k}{\partial \bar{z}_j} N_k, \quad  j = 1, \dots, \nu;\\
  &L_{\nu+ j} = \frac{\partial}{\partial t_j} - i\sum_{k=1}^d \frac{\partial \phi_k}{\partial t_j} N_k, \quad j=1, \dots, \mu,
\end{align*}
where the complex vector fields $N_1, \dots, N_d$ are giving by
\begin{equation*}
  N_k = \sum_{\ell = 1}^d \big({}^tW_s^{-1}\big)_{k\ell} \frac{\partial}{\partial s_\ell}, \quad k = 1, \dots, d,
\end{equation*}
and we are assuming $U$ small enough so $W_s$ is invertible on $U$. By
Newlander-Nirenberg Theorem we have that every complex involutive
structure is locally integrable, and as a consequence, every elliptic
involutive structure is also locally integrable. We also note that if
$M$ is $\mathcal{C}^\omega$-smooth and if $\mathcal{V}$ is
$\mathcal{C}^\omega$-smooth, then $\mathcal{V}$ is locally integrable.
\begin{exa}
  Let $M \subset \mathbb{C}^{n+1}$ be a real hypersurface given by
  \begin{equation*}
    M = \{(z,w) \in \mathbb{C}^n \times \mathbb{C} \,: \, \Im w = \phi(z,\bar z, \Re w) \},
  \end{equation*}
  for some real-valued, smooth function $\phi$, such that
  $\phi(0) = 0$. We can write $M$ as the image of the map
  $\mathbb{R}^{2n+1} \ni (x,y,s) \mapsto (x + iy, s + i\phi(x,y,s))$,
  which then defines a locally integrable structure on $M$ (which is
  CR).
\end{exa}
If $\mathcal{V}$ is a CR involutive structure, we call the pair
$(M, \mathcal{V})$ an abstract CR manifold, and if $\mathcal{V}$
is locally integrable, then we say that $(M, \mathcal{V})$ is
locally embeddable. It
is also worthwhile noticing that in the CR case (even the abstract
one), the characteristic set $\mathrm{T}^0$ is actually a bundle.

An important class of locally integrable structures that are not CR
are the so-called Mizohata structures. An involutive structure
$\mathcal{V}$ on some $\mathcal{C}^\infty$-smooth, $n+1$-dimensional
manifold $M$ is called a Mizohata structure, if it satisfies:
\begin{itemize}
\item $\mathcal{V}$ has rank $n$;
\item the characteristic set $\mathrm{T}^0$ is not trivial;
\item the Levi form is nondegenerate in every point of
  $\mathrm{T}^0\setminus \{0\}$.
\end{itemize}
\begin{exa}
  (The standard Mizohata structure) Let $0 \leq \nu \leq n$. The
  standard Mizohata structure of type $\{\nu, n- \nu\}$ on
  $\mathbb{R}^{n+1}$ is the locally integrable structure generated by
  the following complex vector fields:
  \begin{equation*}
    L = \frac{\partial}{\partial t_j} - i\epsilon_jt_j\frac{\partial}{\partial x}, \quad t = 1, \dots, n,
  \end{equation*}
  where $\epsilon_j = 1$ for $j = 1, \dots, \nu$, and
  $\epsilon_j = -1$ for $j = \nu + 1, \dots, n$. So the characteristic
  set is given by
  \begin{equation*}
    \mathrm{T}^0_{x,t} = 
    \begin{cases}
      \{\lambda \mathrm{d}x \, : \, \lambda \in \mathbb{R} \}, &  t = 0,\\
      \{0\}, & t \neq 0,
    \end{cases}
  \end{equation*}
  and the projection of $\mathrm{T}^0$ to the base manifold is the
  curve $\Sigma = \{(x,t) \in \mathbb{R}^{n+1} \, : \, t = 0\}$.
\end{exa}
Actually, every Mizohata structure can be modeled on the standard one
in the following manner. Let $\mathcal{V}$ be a Mizohata structure on
$M$, and denote by $\Sigma$ the projection of $\mathrm{T}^0$ on
the base manifold $M$. Now let $p \in \Sigma$, so
$\text{dim}\mathrm{T}^0 = 1$. Suppose that the Levi form in the
characteristic direction at $p$ has $\nu$ positive and $n - \nu$
negative eigenvalues. Therefore (see for instance Theorem VII.1.1 of
\cite{trevesbook}) one can find a local coordinate system vanishing at
$p$, $(U, x, t_1, \dots, t_n)$, such that $\mathcal{V}$ is generated
on $U$ by the following complex vector fields,
\begin{equation*}
  L = \frac{\partial}{\partial t_j} - i\epsilon_jt_j(1+\rho_j(x,t))\frac{\partial}{\partial x}, \quad t = 1, \dots, n,
\end{equation*}
where $\epsilon_j = 1$ for $j = 1, \dots, \nu$, and $\epsilon_j = -1$
for $j = \nu_1, \dots, n$, and the functions $\rho_j$ are smooth and
vanish of infinite order at $t=0$ for every $j$. Furthermore, the
Mizohata structure is integrable if and only if we can take
$\rho \equiv 0$ (see Theorem VII.1.1 of \cite{trevesbook}). This
Mizohata structures provide us with a large set of examples of
involutive structures outside the "CR world", and they were
extensively studied, see for instance
\cite{hounie1992,hounie1993,hounie1997,meziani1995,meziani1997,meziani1999,jacobowitz1993,sjostrand1980}
.
%%%%%%%%%%%%%%%%%%%%
\subsection{Compatible maps and infinitesimal $\mathcal{V}$-automorphisms}

\begin{defn}
  \label{def:compatible} If $M$ and $M'$ are endowed with involutive structures $\crb$ and $\crb'$,
  respectively, then we say that a map $h\colon M\to M'$ of class at least $\diffable{1}$ is
  {\em compatible} if $h_* \crb \subset \crb'$. In this case, we write $h\colon (M,\crb) \to (M',\crb')$. 
\end{defn}

\begin{lem}
  \label{lem:compatible} Let $h\colon M\to M'$ be a $\diffable{1}$-smooth  map. Then $h$ is
  compatible if and only if $h^* \holforms{M'} \subset \holforms(M)$, and we also have
  that for every solution $f$ of $\crb'$, the
  function $h^*f = f\circ h$ is a solution of $\crb$. If in addition
  $\crb'$ is locally integrable,  the following statements are equivalent:
  \begin{compactenum}
  \item $h$ is compatible;
    \item $h^*f = f\circ h$ is a solution of $\crb$ if $f$ is a solution of $\crb'$. 
  \end{compactenum}
\end{lem}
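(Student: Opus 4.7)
The plan is to split the lemma into three assertions and treat them separately.

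First, for the algebraic equivalence \emph{$h$ compatible $\iff h^*\holforms(M')\subset\holforms(M)$}, I would work pointwise and use the dual pairing. Fix $p\in M$ and $q=h(p)$, and recall the basic identity $h^*\omega(L)=\omega(h_*L)$ for $L\in\C T_pM$, $\omega\in\C T_q^*M'$. If $h$ is compatible and $\omega\in\holforms_q(M')$, then for any $L\in\crb_p$ we have $h_*L\in\crb'_q$, hence $h^*\omega(L)=\omega(h_*L)=0$, so $h^*\omega\in\holforms_p(M)$. Conversely, suppose $h^*\holforms(M')\subset\holforms(M)$. For $L\in\crb_p$ and arbitrary $\omega\in\holforms_q(M')$, $\omega(h_*L)=h^*\omega(L)=0$. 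Since $\holforms(M')$ is the annihilator of $\crb'$ in $\C T^*M'$ and $\crb'$ is a vector bundle (so the double annihilator recovers $\crb'_q$), this forces $h_*L\in\crb'_q$. Hence $h$ is compatible.

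Second, for the implication \emph{compatibility $\Rightarrow$ pullbacks of solutions are solutions}, I would apply the chain rule: if $f$ is a solution of $\crb'$ on an open set containing $h(U)$ and $L\in\Gamma(U,\crb)$, then
\[
L(f\circ h)=df(h_*L)=(h_*L)f=0,
\]
since $h_*L$ is a section of $\crb'$ by compatibility. This takes care of the ``we also have'' part, and also yields $(1)\Rightarrow(2)$ in the second half.

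Third, for the converse $(2)\Rightarrow(1)$ under local integrability of $\crb'$, I would reduce to the equivalence established in the first step by producing enough solutions to recover $\holforms(M')$ as pullbacks. Let $p\in M$, $q=h(p)$, and pick a local integrability chart near $q$ with first integrals $Z_1,\dots,Z_{m'}$ whose differentials $dZ_1,\dots,dZ_{m'}$ span $\holforms(M')$ on a neighborhood of $q$. Each $Z_j$ is a solution of $\crb'$, so hypothesis (2) gives that $Z_j\circ h$ is a solution of $\crb$; equivalently, for every $L\in\Gamma(\crb)$ near $p$,
\[
0=L(Z_j\circ h)=h^*(dZ_j)(L),
\]
so $h^*(dZ_j)\in\holforms(M)$. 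Since the $dZ_j$ span $\holforms(M')$ near $q$, we conclude $h^*\holforms(M')\subset\holforms(M)$ near $p$, and the first part of the lemma then yields compatibility of $h$ near $p$. Running $p$ over $M$ finishes the argument.

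The only nontrivial step is the last one, and the main obstacle — really the point where local integrability is essential — is the need to have \emph{enough} solutions $f$ whose differentials span $\holforms(M')$; without that, condition (2) cannot detect the subbundle $\crb'$. Everything else is bundle duality plus the chain rule.
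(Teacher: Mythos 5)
Your proof is correct and follows essentially the same route as the paper's: the algebraic equivalence via the pullback identity $(h^*\omega)_p(L_p)=\omega_{h(p)}((h_*)_pL_p)$, the chain-rule computation for pullbacks of solutions, and the use of local integrability to produce enough solutions whose differentials span $\holforms(M')$. The only cosmetic difference is that you spell out the double-annihilator step in the converse of the first equivalence, which the paper compresses into ``the converse follows in the same way.''
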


\begin{proof}
  The first statement follows easily from $(h^* \omega)_p (L_p) = \omega_{h(p)} ((h_*)_p L_p)$: If we
  assume $h$ is compatible, then the right hand side is zero for every $\omega_{h(p)} \in \mathrm{T}'_{h(p)}( M')$,
  so $(h^* \omega)_p \in \holforms(M)$; the converse follows in the same way.

  Now assume that $\crb'$ is locally integrable. If $h$ is compatible, then
  we already know that $f\circ h$ is a solution for every solution $f$. If on the
  other hand, $f \circ h$ is a solution, then $ L f\circ h= d(h^*f) (L) = (h^* df) (L) = 0$ for every section
  $L$ of $\crb$, so
  that $h^* df \in  \holforms(M)$; since the differentials of solutions span $\holforms{M'}$ by local integrability,
  it follows that $h^* \holforms{M'} \subset \holforms(M)$. 
\end{proof}

The ``tangent space'' to the space of compatible diffeomorphisms (of class at least $\diffable{1}$) consists of
vector fields satisfying a certain PDE, which we now describe (the reader can skip to
\cref{def:infaut} to see the result). Let $h_\tau : M \longrightarrow M$ be an one-parameter family of compatible maps such that $h_0 = \text{Id}$, and $\frac{\mathrm{d}}{\mathrm{d} \tau} \big|_{\tau = 0} h_\tau = X$. Let $(U, y_1, \dots, y_{N})$ be a local coordinate system on $M$, with $N = n+m$. Let  $\omega \in \Gamma(U, \mathrm{T}^\prime)$ and $L \in \Gamma(U, \mathcal{V})$. Since $h_\tau$ is a compatible map, we have that $h_\tau^*\omega(L) = 0$, for every $\tau$. Writing $\omega = \sum_{j = 1}^N \beta_j \mathrm{d} y_j$, $L = \sum_{j = 1}^N a_j \frac{\partial}{\partial y_j}$, and $h_\tau(y) = (h_\tau^1, \dots, h_\tau^N)$, we have that
\begin{align*}
	0 & = h_\tau^\ast\omega(L)(y)) \\
	& =  \omega((h_\tau)_\ast L)(h_\tau(y)) \\
	& = \sum_{k = 1}^N \beta_k(h_\tau(y)) \mathrm{d}y_k \left(\sum_{\ell,j = 1}^N a_\ell(y) \frac{h_\tau^j}{\partial y_\ell}(y) \frac{\partial}{\partial y_j} \right) \\
	& = \sum_{k, \ell = 1}^N \beta_k(h_\tau(y)) a_\ell(y) \frac{\partial h_\tau^k}{\partial y_\ell}(y) \\
	& = \sum_{k = 1}^N \beta_k(h_\tau(y)) Lh_\tau^k(y),
\end{align*}
for every $y \in U$, and every $\tau$. Now taking the $\tau$-derivative of the above equation and placing $\tau = 0$, and keeping in mind that $h_0(y) = y$ and writing $X = \sum_{j = 1}^N \alpha_j \frac{\partial}{\partial y_j}$, we obtain that
\begin{align*}
	0 & = \sum_{k,j = 1}^N \frac{\beta_k}{\partial y_j}(y)\frac{\mathrm{d}}{\mathrm{d} \tau}\Big|_{\tau = 0} h_\tau^j (y) L h_0^k(y) + \sum_{k = 1}^N \beta_k(y) L \left( \frac{\mathrm{d}}{\mathrm{d} \tau}\Big|_{\tau = 0} h_\tau^j \right) (y) \\
	& = \sum_{k = 1}^N \frac{\beta_k}{\partial y_k}(y)\alpha_k(y) a_k(y) + \sum_{k = 1}^N \beta_k(y) L \alpha_k (y) \\
	& =  \sum_{k = 1}^N X \beta_k (y) a_k(y) + \sum_{k = 1}^N \beta_k(y) L \alpha_k (y) \\
	& = - \sum_{k = 1}^N  \beta_k (y) X a_k(y) + \sum_{k = 1}^N \beta_k(y) L \alpha_k (y),
\end{align*}
for $\sum_{k = 1}^N \beta_k(y) a_k(y) = 0$. Therefore, $X$ satisfies the following equation: $\omega \big( [L, X] \big) = 0$. Since $\omega(L) = 0$, we have that $\mathrm{d}\omega(L, X) = L \omega(X) - \omega([L,X]) = L \omega(X)$, and also that $\mathcal{L}_L \omega(X) = \mathrm{d}\omega(L,X) + X \omega(L) = \mathrm{d} \omega(L,X) = L \omega(X)$, where $\mathcal{L}$ is the Lie derivative. 
\begin{defn}\label{def:infaut}
Let $X$ be a real vector field in $M$. We say that $X$ is a \emph{infinitesimal $\mathcal{V}$-automorphism} if $\omega([X,L]) = 0$ for every $\omega \in \mathrm{T}^\prime(M)$ and $L \in \mathcal{V}$. 
\end{defn}
We point out that if $\mathcal{V}$ is locally integrable, we can get a simpler equation. If $\mathrm{T}^\prime M$ is spanned on $U$ by the first integrals $Z_1, \dots, Z_m$ and then $X$ is an infinitesimal $\mathcal{V}$-automorphism if and only if $L \mathrm{d}Z_j (X) = 0$, for every $j = 1, \dots, m$. In spirit, this
corresponds to writing $X = \sum_j X_j \dop{Z_j} $ with $X_j$ being a solution. 

\subsection{An interesting example}\label{sub:example}

Here we describe an example of a locally integrable structure, which is generically strictly
pseudoconvex, but with a nontrivial exceptional locus.  
Let $k , \ell \in \mathbb{N}$. Consider in $\mathbb{R}^3$ the locally integrable structure generated by the following first integrals:
\begin{align*}
	Z(x,y,t) & = x + i \frac{t^{\ell+ 1}}{\ell + 1} \\
	W(x,y,t) & = y + i \frac{t^{k + 1}}{k + 1}.  
\end{align*}
Then $\mathcal{V}$ is spanned by the vector field 
\[
	L = \frac{\partial}{\partial t} - i t^\ell \frac{\partial}{\partial x} - i t^k \frac{\partial}{\partial y}.
\]
Note that $\mathbb{C} T \mathbb{R}^3$ is spanned by the vector fields $\{L, \partial / \partial x, \partial / \partial y\}$, and that  $\mathbb{C} T^* \mathbb{R}^3$ by the one-forms $\{\mathrm{d}t, \mathrm{d} Z, \mathrm{d} W \}$.  Consider the following real one-form $\theta \in \mathrm{T}^0 \mathbb{R}^3$ given by
\[
	\theta = t^k \mathrm{d} Z - t^\ell \mathrm{d} W. 
\]
Now observe that,
\[
	\mathcal{L}_L \theta  =  \left( \frac{\partial}{\partial t} t^\ell \right) \mathrm{d} Z - \left( \frac{\partial}{\partial t} t^k \right) \mathrm{d} W,
\]
thus,
\[
	\mathcal{L}_{L^q} \theta = \mathcal{L}_L \cdots \mathcal{L}_L \theta =   \left( \frac{\partial^q}{\partial t^q} t^\ell \right) \mathrm{d} Z - \left( \frac{\partial^q}{\partial t^q} t^k \right) \mathrm{d} W =  \left(q! \binom{\ell}{q} t^{\ell-q} \right) \mathrm{d} Z - \left( k! \binom{\ell}{k} t^{k-q} \right) \mathrm{d} W.
\]
Therefore, if $k \neq \ell$, we have that $\langle \mathcal{L}_{L^k} \theta, \mathcal{L}_{L^\ell} \theta \rangle = \mathrm{T}^\prime \mathbb{R}^3$. Now let $X = a \partial / \partial t + b \partial / \partial x + c \partial / \partial y$ be a real vector field. Then $X$ is an infinitesimal $\mathcal{V}$-automorphism if
\begin{align*}
	\left(\frac{\partial}{\partial t} - i t^\ell \frac{\partial}{\partial x} - i t^k \frac{\partial}{\partial y}\right)\left(i t^\ell a + b \right) & = 0, \\
	\left(\frac{\partial}{\partial t} - i t^\ell \frac{\partial}{\partial x} - i t^k \frac{\partial}{\partial y}\right)\left(i t^k a + c \right) & = 0,
\end{align*}
in other words,
\begin{align*}
	& \frac{\partial b}{\partial t}  + t^{2\ell} \frac{\partial a}{\partial x} + t^{\ell + k} \frac{\partial a}{\partial y}  = 0,\\
	 &\frac{\partial}{\partial t} (t^\ell a) - t^\ell \frac{\partial b}{\partial x} - t^k \frac{\partial b}{\partial y}  = 0, \\
	&\frac{\partial c}{\partial t}  + t^{\ell + k} \frac{\partial a}{\partial x} + t^{2 k} \frac{\partial a}{\partial y} = 0, \\
	 &\frac{\partial}{\partial t} (t^k a) - t^\ell \frac{\partial c}{\partial x} - t^k \frac{\partial c}{\partial y}  = 0 \\
\end{align*}
One can now see by hand that we can completely eliminate the
derivatives of $a$ from this PDE system: the first and third equations yield $t^k b_t = t^\ell c_t$,
and the second and forth equation yields
\[ (\ell -  k )t^{k+\ell -1} a = t^{\ell + k} b_x + t^{2 k} b_y - t^{2\ell} c_x - t^{\ell + k} c_y.   \]
The structural reason for this is going to be made clear by the interpretation of the
condition to be an infinitesimal automorphism as being parallel with respect to a certain connection. The next sections introduce the necessary technicalities.

%%%%%%%%%%%%%%%%%%%%%%%%%%%%%%%%%
%%%%%%%%%%%%%%%%%%%%%%%%%%%%%%%%%
%
\section{$\mathcal{V}$-bundles}\label{sec:V_bundles}
%
%%%%%%%%%%%%%%%%%%%%%%%%%%%%%%%%%
%%%%%%%%%%%%%%%%%%%%%%%%%%%%%%%%%
%
In this section we generalize the notion of an abstract CR
bundle using the notion of involutive structures. Let $M$ be a
$\mathcal{C}^\infty$-smooth, $n+m$-dimensional manifold endowed with a
involutive structure $\mathcal{V}\subset \mathbb{C}TM$ of rank
$n$.
\begin{defn}
  We say that a $\mathcal{C}^\infty$-smooth complex vector bundle $E$
  over $M$ is a $\mathcal{V}$-bundle over $M$ if it is
  equipped with a flat partial connection $D$, defined on
  $\mathcal{V}$.

  More precisely, for every open set $U \subset M$, $D$ satisfies
  the following properties:
  \begin{compactenum}[\rm i)]
  \item $D_{a L + K} \omega = a D_{L} \omega + D_{K} \omega$, \, for
    all $a \in \diffable{\infty}(U)$,
    $L, K \in \Gamma (U, \mathcal{V})$, $\omega\in \Gamma (U,E)$.
  \item $D_{L} (\omega + \eta) = D_{L} \omega + D_{L}\eta $, \, for
    all $L \in \Gamma (U, \mathcal{V})$,
    $\omega, \eta\in \Gamma (U,E)$.
  \item $D_{L} a \omega = (L a) \omega + a D_{L} \omega$,\, for all
    $a \in \diffable{1}(U)$, $L \in \Gamma (U, \mathcal{V})$,
    $\omega\in \Gamma (U,E)$.
  \item For all $L, K \in \Gamma(U, \mathcal{V})$
    % there exists $\bar M \in \Gamma(U, \crb)$ such that
    % $[D_{\bar L}, D_{\bar K} ] = D_{\bar M} $.
    $[D_{L}, D_{K} ] = D_{[L, K]} $.
  \end{compactenum}
\end{defn}

\begin{exa} The flat bundle $M \times \C^r$ has a canonical $\mathcal{V}$-vector bundle structure by 
declaring that constant sections are solutions; using $e^1, \dots, e^r$ as sections, 
this means that $D_{L} \eta_j e^j = (L \eta_j) e^j$.
\end{exa}

\begin{exa}
The orthogonal bundle of $\mathcal{V}$, $\mathrm{T}^\prime$, becomes a $\mathcal{V}$-bundle with the Lie derivative of forms, 
\[ (D_{L} \omega) (X) = L \omega (X) - \omega ([L, X]) = d\omega (L, X). \]
First note that the bundle $\mathrm{T}^\prime$ is a complex bundle over $M$ for it is a subbundle of $\mathbb{C}T^\ast M$, and the later has a natural complex vector bundle structure.
Since $[\mathcal{V},\mathcal{V}]\subset \mathcal{V}$, it is easy to see that $D_L\omega \in \Gamma(U,\mathrm{T}^\prime)$ for all $L \in \Gamma(U, \mathcal{V})$ and $\omega \in \Gamma(U,\mathrm{T}^\prime)$. The only non-obvious condition that we need to check is \textit{iv)}: For any $\omega \in \Gamma(U, \mathrm{T}^\prime)$, $L,K \in \Gamma(U,\mathcal{V})$, and $X \in \Gamma (U, \mathbb{C}TU)$, we have that
\begin{align*}
	[D_L,D_K]\omega (X) & = D_L(D_K\omega)(X) - D_K(D_L\omega)(X)\\
	&=   L \big(D_K\omega (X)\big) - D_K\omega ([L, X]) -  K\big( D_L\omega (X)\big) + D_L\omega ([K, X]) \\
	&= L \big( K \omega (X) - \omega ([K, X]) \big) - K\omega([L,X]) + \omega([K,[L,X]]) \\
	&\quad - K\big( L\omega(X) - \omega([L,X]) \big) + L\omega([K,X]) - \omega([L,[K,X]]) \\
	&= [L,K]\omega(X) - \omega([K,[X,L]] + [L,[K,X]]) \\
	&= [L,K]\omega(X) - \omega([[L,K],X]) \\
	&= D_{[L,K]}\omega(X).
\end{align*}
\end{exa}
%
%\[ 
%\begin{aligned}
%[D_{\bar L}, D_{\bar K}] \omega &= \left( [\bar L, \bar K ]  
%+ (\bar L D(\bar K) - \bar K D(\bar L)) +  D(\bar L) D(\bar K) - D(\bar K) D(\bar L) \right) \omega \\ 
%&=   \left( [\bar L, \bar K ]  
%+ (d D (\bar L, \bar K) + D([\bar L,\bar K])) +  D(\bar L) D(\bar K) - D(\bar K) D(\bar L) \right) \omega \\ 
%&= \left( D_{[\bar L, \bar K]} + (dD +  D\wedge D) (\bar L, \bar K) \right) \omega;
%\end{aligned}
 %\]
%i.e. $[D_{\bar L}, D_{\bar K}] = D_{[\bar L\bar K]}$ if and only if
%$dD (\bar L, \bar K) = - D\wedge D (\bar L, \bar K)$.
%
If $U\subset M$ is open, and $\omega^1, \dots, \omega^r$ of $\Gamma(U,E)$ form a frame, 
then $D$ is expressed over $U$ by its {\em connection matrix} $(D^\alpha_\beta)_{\alpha,\beta=1}^r$,  
\[ D_{L} \omega^\alpha = \sum_{\beta= 1}^r  D^\alpha_{\beta} (L) \omega^\beta.\]
The  forms $D^\alpha_{\beta}$ completely determine $D$, because for a
general section $\eta = \eta_\alpha \omega^\alpha$ (from now one we are using Einstein's summation convention) we have 
\[ \begin{aligned}
D_{L} \eta =  (L \eta_\beta  +  \eta_\alpha D^\alpha_\beta (L)) \omega^\beta.
\end{aligned} \]
After choosing a basis of vector fields $L_1, \dots, L_n \in \Gamma(U, \mathcal{V})$, we  shall also employ the notation
$D^\alpha_{j,\beta} = D^\alpha_\beta (L_j)$, and we shall write $[L_j, L_k] = C_{j,k}^\ell L_\ell$. It follows that the condition $[D_{ L}, D_{ K}] = D_{[ L,  K]}$ for all $ L,  K$ is equivalent to the
system of PDEs:
\begin{equation}\label{e:compatibilitycondition}
	 L_j D^\alpha_{k, \beta}  - 
	 L_k D^\alpha_{j, \beta}  
	+ D^\alpha_{k,\gamma} D^\gamma_{j,\beta}  - 
	D^\alpha_{j,\gamma} D^\gamma_{k,\beta} = C_{j,k}^l D^\alpha_{l,\beta} 
\end{equation}
for all $j,k = 1,\dots,n$, and $\alpha,\beta = 1,\dots, r$.
\begin{defn}
We say that a (distribution) section $\eta\in \Gamma(U,E)$ is a solution (over $U$) if $D_{ L} \eta =0$ (in the distributional sense) for every $ L \in \Gamma(U,\mathcal{V})$. Equivalently, $\eta = \eta_\alpha\omega^\alpha $ is a solution if its components
$\eta_\alpha$ satisfy the following system of first order PDEs:
\[ L_j \eta_\alpha = - D_{j,\alpha}^\beta  \eta_\beta, \quad j=1,\dots, n, \quad \alpha=1,\dots,r.\] 	
\end{defn}
Thus, the principal symbol of the bundle operator necessarily agrees with the vector fields in $\mathcal{V}$, but the solution condition also involves a zeroth order term. 
We can now reformulate Definition~\ref{def:infaut}: Any real vector field $X$ can be considered
as a section of $\holforms(M)^*$, and it is an infinitesimal automorphism if and only if $X$ is a solution of the canonical
structure on $\holforms(M)^*$. 
%
%%%%%%%%%%%%%%%%%%%%%%%%%%%%%%%%%%%
%%%%%%%%%%%%%%%%%%%%%%%%%%%%%%%%%%%
%
\subsection{The canonical involutive structure of a $\mathcal{V}$ bundle} % (fold)
\label{sub:the_canonical_cr_structure_of_a_cr_bundle}
%
%%%%%%%%%%%%%%%%%%%%%%%%%%%%%%%%%%%
%%%%%%%%%%%%%%%%%%%%%%%%%%%%%%%%%%%
%

% subsection the_canonical_cr_structure_of_a_cr_bundle (end)
It is an interesting question to ask whether or not a bundle $E$ as above
can be endowed with an involutive structure in such a way that sections solutions
are exactly the compatible maps from $M$ to $E$. 

Let $\eta = \eta_\alpha \omega^\alpha$ be a section over an open set $U$ over
which the bundle trivializes, with bundle coordinates $(x,\omega_1, \dots, \omega_r) \in U \times \C^r$. Let $L$ be a section of $\mathcal{V}$. Then we can write $\eta_\ast L$ as
\begin{equation*}
	\eta_\ast L = L + \sum_{\alpha=1}^r L\eta_\alpha \frac{\partial}{\partial \omega_\alpha} + \sum_{\alpha=1}^r L \bar{\eta}_\alpha\frac{\partial}{\partial \bar{\omega}_\alpha}.
\end{equation*}
If we assume $\eta$ is a solution, then 
\begin{align*}
	\eta_\ast L & = L  - \sum_{\alpha=1}^r D_\alpha^\beta(L)\eta_\beta \frac{\partial}{\partial \omega_\alpha} + \sum_{\alpha=1}^r L \bar{\eta}_\alpha\frac{\partial}{\partial \bar{\omega}_\alpha}\\
	& =  L  - \sum_{\alpha=1}^r D_\alpha^\beta(L)\omega_\beta(\eta) \frac{\partial}{\partial \omega_\alpha} + \sum_{\alpha=1}^r L \bar{\eta}_\alpha\frac{\partial}{\partial \bar{\omega}_\alpha}.
\end{align*}
We therefore define $\mathcal{V}_E \subset \mathbb{C} TE$ to consist of all tangent vectors of the form
\begin{equation}\label{eq:defn_V_E}
	 L - \sum_{\alpha} D^\beta_{\alpha} ( L) \omega_\beta \dop{\omega_\alpha} + X^{(0,1)}, 
\end{equation}
where $X^{(0,1)}$ denotes a $(0,1)$ vector field in $\mathbb{C}^r$ with $\mathcal{C}^\infty$-smooth coefficients in $U\times\mathbb{C}^r$, \textit{i.e.} $X^{(0,1)}$ is a vector field of the form
\[X^{(0,1)} = \sum_{\alpha=1}^r a_\alpha \frac{\partial}{\partial \bar{\omega}_\alpha},\]
and $a_1,\dots,a_r \in \mathcal{C}^\infty(U\times\mathbb{C}^r)$. We claim that $\mathcal{V}_E$ is well-defined, \textit{i.e.} it's definition is independent of the frame $\{\omega^1 ,\dots, \omega^r \}$. So let $\tilde \omega^\alpha = T^\alpha_\beta \omega^\beta$ be a different frame. Denoting $\tilde{T}$ the inverse of $T$, \textit{i.e.} $T_\alpha^\beta \tilde{T}_\beta^\gamma = \tilde{T}_\beta^\gamma  T_\alpha^\beta = \delta_{\alpha,\gamma}$, then the coordinate function relate as ${\tilde \omega}_\alpha = {\tilde T}_\alpha^\beta \omega_\beta $. The connection matrix transforms as follows: for every section $L$ of $\mathcal{V}$, and every $\alpha,\beta =1, \dots, r$, 
\[\tilde{D}_\beta^\alpha(L) = \big( LT_\gamma^\alpha + T_\delta^\alpha D_\gamma^{\delta}(L)\big) \tilde{T}_\beta^\gamma.\]
We also have that the vector fields $\partial/\partial x_j$ and $\partial/\partial \omega_\beta$ transform as
\[\dop{x_j} \rightsquigarrow \dop{x_j}  + \sum_\alpha\dopt{{\tilde T}_{\alpha}^\beta}{x} \omega_\beta \dop{{\tilde \omega}_\alpha} +  \sum_\alpha\dopt{{\bar{\tilde T}}_{\alpha}^\beta}{x} \bar{\omega}_\beta \dop{{\bar{\tilde{\omega}}}_\alpha}, \quad \dop{\omega_\beta} \rightsquigarrow \sum_\alpha {\tilde T}_{\alpha}^\beta \dop{{\tilde \omega}_\alpha}, \]
Then a vector field of the form \eqref{eq:defn_V_E} in the new coordinates $(x,\tilde{\omega})$ can be written as
\[ L - \sum_\gamma (D_\alpha^\beta(L)\tilde{T}_\gamma^\alpha  -  L \tilde{T}_\gamma^\beta)\omega_\beta \frac{\partial}{\partial \tilde{\omega}_\gamma} + \tilde{X}^{(0,1)}.\]
Now using that $(LT_\gamma^\beta)\tilde{T}_\alpha^\gamma = -T_\gamma^\beta L \tilde{T}_\alpha^\gamma$, one can check that 
\[ \tilde{D}_\beta^\gamma(L)\tilde{\omega}_\beta = (D_\alpha^\beta(L)\tilde{T}_\gamma^\alpha  -  L \tilde{T}_\gamma^\beta)\omega_\beta,\]
and therefore $\mathcal{V}_E$ is well defined. We claim that $\mathcal{V}_E$ defines an involutive structure on $E$. So let $L$ and $K$ be two sections of $\mathcal{V}$. Then by \eqref{e:compatibilitycondition} we have that
\[
\begin{aligned}
	&\left[  L - \sum_\alpha D_{ \alpha}^\beta ( L) \omega_\beta \dop{\omega_\alpha} + X^{(0,1) } , 
	K - \sum_\alpha D_{ \alpha}^\beta ( K) \omega_\beta \dop{\omega_\alpha} + Y^{(0,1) } \right] \\ &= [ L,  K] -\sum_\alpha \left(  L D_{ \alpha}^\beta (K) -  K D_{ \alpha}^\beta (L) + D_{\gamma}^\beta (L) D_{\alpha}^\gamma (K) - D_{\gamma}^\beta (K) D_{\alpha}^\gamma (L) \right)\omega_\beta \dop{\omega_\alpha} + Z^{(0,1)} \\
	&=
	[L, K] -\sum_\alpha  D_{ \alpha}^\beta ([L, K]) \omega_\beta \dop{\omega_\alpha} + Z^{(0,1)}, \\
\end{aligned}
\]
\textit{i.e.} $[\crb_E, \crb_E ] \subset \crb_E$.
We therefore have: 
\begin{prop}\label{pro:existenceVE}
If $E$ is a $\mathcal{V}$-bundle over $M$, then there 
exists a (unique) involutive structure $\mathcal{V}_E$ on $E$ with the property that the projection 
$E\to M$ is $(\mathcal{V}_E,\mathcal{V})$-compatible, and such that a section $\sigma \colon M \to E$ is 
a solution of $\mathcal{V}$ if and only if the map $\sigma \colon (M,\crb) \to (M,\crb_E)$
is $(\mathcal{V},\mathcal{V}_E)$-compatible. The rank of $\crb_E$ is $\text{rank}\,\mathcal{V} + r$. 
\end{prop}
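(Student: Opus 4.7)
The construction of $\crb_E$ has in effect already been carried out in the text leading up to the proposition; my plan is to package the local formula \eqref{eq:defn_V_E} into a global definition, verify the two characterising properties, and then address uniqueness.

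For existence I proceed locally. In each chart $U\subset M$ over which $E$ trivialises as $U\times\C^r$ with fibre coordinates $(\omega_1,\dots,\omega_r)$ and over which $\mathcal{V}$ admits a frame $L_1,\dots,L_n$, set
\[ \tilde L_j := L_j - D^\beta_\alpha(L_j)\,\omega_\beta\,\dop{\omega_\alpha},\qquad j=1,\dots,n, \]
and declare $\crb_E$ to be locally spanned by $\tilde L_1,\dots,\tilde L_n$ together with $\dop{\bar\omega_1},\dots,\dop{\bar\omega_r}$. This is a complex subbundle of $\C TE$ of rank $n+r$. Independence under change of trivialisation of $E$ is verified in the excerpt, and a parallel (easier) check handles a change of $\mathcal{V}$-frame $\{L_j\}$, so these local descriptions glue to a well-defined global subbundle. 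Involutivity is the excerpt's bracket computation, which uses precisely the compatibility condition \eqref{e:compatibilitycondition}.

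Next I would verify the two properties. Projection compatibility is immediate from $\pi_*\tilde L_j = L_j \in \mathcal{V}$ and $\pi_*\dop{\bar\omega_\alpha}=0$. For the solution/compatibility correspondence, a local section $\sigma$ with components $\eta_\alpha$ satisfies
\[ \sigma_* L_j = L_j + (L_j\eta_\alpha)\,\dop{\omega_\alpha} + (L_j\bar\eta_\alpha)\,\dop{\bar\omega_\alpha}, \]
as computed at the start of Subsection 4.1. Writing a general element of $\crb_E|_{\sigma(p)}$ as $a^k\tilde L_k + b^\alpha\dop{\bar\omega_\alpha}$ and matching coefficients: the horizontal part forces $a^k L_k = L_j$, the $\dop{\bar\omega_\alpha}$-coefficient yields $b^\alpha = L_j\bar\eta_\alpha$, and the $\dop{\omega_\alpha}$-coefficient collapses to $L_j\eta_\alpha = -D^\beta_\alpha(L_j)\eta_\beta$, which is exactly the solution condition. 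Hence $\sigma$ is compatible if and only if $\sigma$ is a solution.

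For uniqueness, let $\crb_E'$ be a second involutive structure satisfying the listed properties. Rank plus projection compatibility imply that at each $(p,w_0)\in E$ there is a splitting of $\crb_E'|_{(p,w_0)}$ as the graph of some lift $L\mapsto L+\Phi(L,w_0)$ of $\mathcal{V}_p$, plus a complex $r$-dimensional vertical piece $V'\subset\ker\pi_*$. Applying the iff condition first to the zero section (which is trivially a solution) pins down horizontal lifts at $w_0=0$, and then applying it to solutions through a general point $(p,w_0)$ forces $\Phi(L,w_0) = -D^\beta_\alpha(L)w_{0,\beta}\,\dop{\omega_\alpha}$; a further variation through solutions with prescribed $L\bar\eta_\alpha(p)$ identifies $V' = \spanc\{\dop{\bar\omega_\alpha}\}$, giving $\crb_E'=\crb_E$. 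I expect this last variational step to be the main obstacle, because it relies on local existence of sufficiently many solutions through a given point with prescribed antiholomorphic first-order behaviour, which is delicate in the abstract involutive setting; if this local solvability fails, uniqueness should instead be read among involutive structures additionally containing the fibrewise $(0,1)$ vector fields, a condition automatic for the constructed $\crb_E$.
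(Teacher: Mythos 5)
Your proposal is correct and takes essentially the same route as the paper: the paper's ``proof'' of the proposition is exactly the construction in the paragraphs preceding it (local formula \eqref{eq:defn_V_E}, frame-independence, involutivity via \eqref{e:compatibilitycondition}), and you reproduce this and then spell out the two characterising verifications that the paper leaves implicit. Your caution on uniqueness is also exactly right --- the paper's remark immediately after the proposition concedes that uniqueness holds only ``if we have enough independent sections,'' which is the local-solvability issue you identify, and is why ``unique'' appears in parentheses.
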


We remark that if there are no compatible sections, then of course any structure on $E$ will satisfy
the conclusion of the proposition, however, if we have enough independent sections, then the structure on $E$ is unique. This  is why we put ``unique'' in parentheses in the formulation of the proposition.

\begin{prop}
For every $p \in M$, $\dim \left(\mathcal{V}\big|_p \cap \overline{\mathcal{V}}\big|_p\right) = \dim \left(\mathcal{V}_E\big|_p \cap \overline{\mathcal{V}_E}\big|_p\right)$.
\end{prop}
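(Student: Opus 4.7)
My plan is to show that the bundle projection $\pi\colon E\to M$ induces a linear isomorphism
$d\pi\colon \mathcal{V}_E|_q \cap \overline{\mathcal{V}_E}|_q \;\longrightarrow\; \mathcal{V}|_{\pi(q)}\cap \overline{\mathcal{V}}|_{\pi(q)}$
at every $q\in E$, from which the equality of dimensions follows immediately. Throughout I would work in a trivializing chart with bundle frame $\omega^1,\dots,\omega^r$ so that, by \eqref{eq:defn_V_E}, every element of $\mathcal{V}_E|_q$ has the form $L - \sum_\alpha D^\beta_\alpha(L)\,\omega_\beta\,\dop{\omega_\alpha} + X^{(0,1)}$ with $L\in\mathcal{V}|_{\pi(q)}$ and $X^{(0,1)}$ an arbitrary vertical $(0,1)$-vector at $q$; conjugating, every element of $\overline{\mathcal{V}_E}|_q$ has the form $\bar K - \sum_\alpha \overline{D^\beta_\alpha(K)}\,\bar\omega_\beta\,\dop{\bar\omega_\alpha} + Y^{(1,0)}$ with $K\in\mathcal{V}|_{\pi(q)}$ and $Y^{(1,0)}$ an arbitrary vertical $(1,0)$-vector.

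For injectivity, suppose $v\in \mathcal{V}_E|_q\cap \overline{\mathcal{V}_E}|_q$ projects to zero. Reading $v$ in the first representation forces $L=0$, so $v=X^{(0,1)}$ is purely vertical of type $(0,1)$; reading $v$ in the second representation forces $K=0$, so $v=Y^{(1,0)}$ is purely vertical of type $(1,0)$. Since the vertical $(1,0)$- and $(0,1)$-subspaces at $q$ meet only in $\{0\}$, we conclude $v=0$.

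For surjectivity, given $L\in\mathcal{V}|_{\pi(q)}\cap\overline{\mathcal{V}}|_{\pi(q)}$ I would exhibit the explicit preimage
$\tilde v := L - \sum_\alpha D^\beta_\alpha(L)\,\omega_\beta\,\dop{\omega_\alpha} - \sum_\alpha \overline{D^\beta_\alpha(\bar L)}\,\bar\omega_\beta\,\dop{\bar\omega_\alpha}$.
Viewed in the first form with $X^{(0,1)} := -\sum_\alpha \overline{D^\beta_\alpha(\bar L)}\,\bar\omega_\beta\,\dop{\bar\omega_\alpha}$, we see $\tilde v\in\mathcal{V}_E|_q$; viewed in the second form, with $K:=\bar L\in\mathcal{V}|_{\pi(q)}$ (which is legitimate precisely because $L\in\overline{\mathcal{V}}|_{\pi(q)}$) and $Y^{(1,0)} := -\sum_\alpha D^\beta_\alpha(L)\,\omega_\beta\,\dop{\omega_\alpha}$, we see $\tilde v\in\overline{\mathcal{V}_E}|_q$. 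Clearly $d\pi(\tilde v)=L$, so surjectivity is established.

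I do not expect any genuine obstacle: the content of the argument is that $\mathcal{V}_E$ is constructed by horizontally lifting $\mathcal{V}$ and then adding an arbitrary vertical $(0,1)$-piece, so the vertical parts of $\mathcal{V}_E$ and $\overline{\mathcal{V}_E}$ sit in transverse subspaces and contribute nothing to the intersection. The only place that demands mild care is checking that the second representation of $\tilde v$ is admissible, which comes down to the fact that $L\in\overline{\mathcal{V}}$ is equivalent to $\bar L\in\mathcal{V}$; after noting this, the well-definedness of $\mathcal{V}_E$ already verified above guarantees that the constructed $\tilde v$ lies in both $\mathcal{V}_E$ and $\overline{\mathcal{V}_E}$ independently of the choice of frame.
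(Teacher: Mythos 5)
Your proof is correct and follows essentially the same route as the paper: the paper also observes that for a vector of $\mathcal{V}_E|_q$ to also lie in $\overline{\mathcal{V}_E}|_q$ its base component must lie in $\mathcal{V}\cap\overline{\mathcal{V}}$ and the vertical $(0,1)$ component is then uniquely determined by that base component, which is exactly the injectivity-plus-surjectivity of $d\pi$ you spell out. Your write-up is a little more explicit (it names the isomorphism and exhibits the preimage), but the underlying idea is identical.
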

\begin{proof}
So let $p \in M$, and let $L$ be a section of $\mathcal{V}$ in a neighborhood of $p$, and consider 
\[ \mathcal{L} =  L - \sum_{\alpha} D^\beta_{\alpha} ( L) \omega_\beta \dop{\omega_\alpha} + X^{(0,1)}\]
the associated section of $\mathcal{V}_E$. Then $\bar{\mathcal{L}}_p \in \mathcal{V}_E\big|_p$ if and only if $\bar{L}_p \in \mathcal{V}\big|_p$ and 
\[ \overline{X^{(0,1)}} = - \sum_{\alpha} D^\beta_{\alpha} ( \bar{L}) \omega_\beta \dop{\omega_\alpha},\]
and this condition fixes the $(0,1)$ component of $\mathcal{L}$.
\end{proof}
\begin{cor}
If $\mathcal{V}$ is CR then $\mathcal{V}_E$ is also CR.
\end{cor}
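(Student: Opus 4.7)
The statement is an immediate consequence of the preceding proposition, and the plan is simply to unpack its dimension identity at the level of fibers. By definition, $\mathcal{V}$ is CR precisely when $\mathcal{V}|_p \cap \overline{\mathcal{V}}|_p = 0$ for every $p \in M$, and similarly $\mathcal{V}_E$ is CR precisely when $\mathcal{V}_E|_q \cap \overline{\mathcal{V}_E}|_q = 0$ for every $q \in E$. So the plan is to observe that the dimension identity from the proposition propagates the vanishing assumption on the base to the total space.

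First I would fix $q \in E$ and set $p = \pi(q) \in M$, where $\pi\colon E \to M$ is the bundle projection. Under the CR hypothesis, $\mathcal{V}|_p \cap \overline{\mathcal{V}}|_p = 0$. Applying the preceding proposition then yields $\mathcal{V}_E|_q \cap \overline{\mathcal{V}_E}|_q = 0$. Since $q \in E$ was arbitrary, $\mathcal{V}_E$ is CR.

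The only point worth checking — the main (minor) obstacle — is that although the preceding proposition is phrased for a point $p \in M$, the assertion is really a statement about an arbitrary fiber point $q \in E$ above $p$. This is clear from the explicit description \eqref{eq:defn_V_E}: the horizontal part of a vector in $\mathcal{V}_E|_q$ is an element $L \in \mathcal{V}|_{\pi(q)}$, the connection term $-\sum_\alpha D^\beta_\alpha(L)\omega_\beta \partial/\partial\omega_\alpha$ is determined by that $L$ and the coordinates of $q$, and the $(0,1)$ fiber component is arbitrary. Hence the argument used in the proof of the proposition (a vector in the intersection has horizontal part lying in $\mathcal{V}|_{\pi(q)} \cap \overline{\mathcal{V}}|_{\pi(q)}$, forcing $L=0$; with $L=0$ the connection term vanishes, leaving a vector whose vertical part must be simultaneously $(0,1)$ and $(1,0)$, hence zero) applies verbatim at every $q \in E$. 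No further verification is needed.
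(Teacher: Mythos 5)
Your proof is correct and takes essentially the same route as the paper: the corollary is an immediate consequence of the preceding proposition's dimension identity, which you correctly unpack at the level of fibers. Your remark about the proposition's statement being (somewhat loosely) phrased at $p\in M$ while the assertion really concerns fiber points $q\in E$ is a fair observation, and your check that the proposition's argument applies verbatim at every such $q$ closes that small gap cleanly.
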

\begin{prop}
If $\mathcal{V}$ is elliptic then $\mathcal{V}_E$ is also elliptic.
\end{prop}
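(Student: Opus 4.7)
The plan is to show directly that $\charforms E$ is trivial at every point of $E$. Fix $q \in E$ with $\pi(q) = p \in M$, and suppose $\sigma \in \charforms_q E$; by definition $\sigma$ is a real one-form on $T_qE$ that extends to a complex linear functional annihilating every vector in $(\mathcal{V}_E)_q$. The goal is to force $\sigma = 0$ using only ellipticity of $\mathcal{V}$ on the base.

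The key observation is that, by taking $L = 0$ in \eqref{eq:defn_V_E}, the entire space of pure $(0,1)$ fiber vectors $\{X^{(0,1)}\}$ lies in $(\mathcal{V}_E)_q$. Consequently $\sigma$ must vanish on every $\partial/\partial\bar\omega_\alpha$, which means $\sigma$ has no $d\bar\omega_\alpha$ components. Since $\sigma$ is real and complex conjugation swaps $d\omega_\alpha$ with $d\bar\omega_\alpha$, the vanishing of the $d\bar\omega_\alpha$ coefficients forces the $d\omega_\alpha$ coefficients to vanish as well. Therefore $\sigma$ is horizontal, i.e. $\sigma = \pi^*\xi$ for some real one-form $\xi \in T^*_pM$.

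It remains to plug a general element of $(\mathcal{V}_E)_q$ with $L \neq 0$ into $\sigma$. Since $\sigma$ has no fiber components, the $\omega_\beta \partial/\partial \omega_\alpha$-correction contributes nothing and the pairing reduces to $\xi(L)$. Requiring this to vanish for every $L \in \mathcal{V}_p$ places $\xi$ in $\charforms_p M$, which is zero by ellipticity of $\mathcal{V}$. Hence $\sigma = 0$ and $\mathcal{V}_E$ is elliptic. There is no real obstacle here; the definition of $\mathcal{V}_E$ builds in full ellipticity in the fiber directions via the free $X^{(0,1)}$-summand, and ellipticity on the base handles the horizontal part.
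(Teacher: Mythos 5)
Your proof is correct and is the dual formulation of the paper's argument. The paper observes that every $(0,1)$ fiber vector lies in $\mathcal{V}_E$ and concludes directly that $\mathcal{V}_E + \overline{\mathcal{V}_E} = \mathbb{C}TE$, which is an equivalent characterization of ellipticity; you work instead with the annihilator $\charforms E$, using exactly the same key fact (setting $L=0$ in \eqref{eq:defn_V_E} yields all pure $(0,1)$ fiber vectors) to kill the fiber components of a characteristic covector and then invoking $\charforms_p M = 0$ for the horizontal part. Both arguments are short and hinge on the same two ingredients; yours simply makes the reality/conjugation step explicit, while the paper leaves it implicit in the span statement.
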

\begin{proof}
Since every $(0,1)$ vector on the fiber belongs to $\mathcal{V}_E$, it is clear that $\mathcal{V}_E + \overline{\mathcal{V}_E} = \mathbb{C} T E$, given that $\mathcal{V} + \overline{\mathcal{V}} = \mathbb{C} T M$.
\end{proof}
\begin{cor}
If $\mathcal{V}$ is complex then $\mathcal{V}_E$ is also complex.
\end{cor}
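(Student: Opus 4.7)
The plan is to observe that ``complex'' is exactly the conjunction of the two structural properties already established for $\crb_E$ in the preceding results, so the proof reduces to a short combination.

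Recall from the definitions in Section~\ref{sec:involutive} that $\crb$ being complex means $\crb_p \oplus \overline{\crb}_p = \C T_p M$ for every $p$, which is equivalent to the simultaneous conditions $\crb_p \cap \overline{\crb}_p = 0$ (the CR condition) and $\crb_p + \overline{\crb}_p = \C T_p M$ (the elliptic condition). So the plan is to verify these two conditions for $\crb_E$ separately, each of which has essentially been done.

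For the CR part: since $\crb$ is complex it is in particular CR, so by the corollary preceding this one (the CR-preservation corollary), $\crb_E$ is CR, i.e.\ $\crb_E\big|_q \cap \overline{\crb_E}\big|_q = 0$ at every $q \in E$. Alternatively, this is immediate from the dimension equality in the preceding proposition. For the elliptic part: since $\crb$ is complex it is in particular elliptic, so by the proposition just proved, $\crb_E$ is elliptic, i.e.\ $\crb_E + \overline{\crb_E} = \C TE$. Combining these two gives $\crb_E\big|_q \oplus \overline{\crb_E}\big|_q = \C T_q E$ for every $q \in E$, which is the definition of $\crb_E$ being complex.

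There is really no obstacle here; the only thing worth flagging is to make explicit (for the reader) the decomposition ``complex $=$ CR $+$ elliptic'' so that the corollary follows by quoting the two immediately preceding results. No new calculation is required.
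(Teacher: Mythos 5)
Your proof is correct and is exactly the (implicit) argument the paper intends: the corollary is placed immediately after the CR-preservation corollary and the ellipticity-preservation proposition precisely because ``complex'' is the conjunction $\crb_p \cap \overline{\crb}_p = 0$ and $\crb_p + \overline{\crb}_p = \C T_p M$, and the two cited results handle each conjunct. No gap; nothing further to add.
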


% TODO: Discuss
% If on the other hand, $\tilde\crb_E$ is a CR structure 
% bundle on $E$ of such that  $\crdim E = \crdim M + e$, and 
% for which the projection to $M$ is CR, 
% then we can try to construct a derivative
%  operator $D$
% such that every section $\sigma \colon M\to E$ which 
% is CR as a map $(M,\crb)$ to $(M,\tilde \crb_E)$ is a CR section
% with respect to the operator $D$.

%  We first decompose any vector
% $X$ taking values in $\tilde \crb_E$ into 
% a horizontal and a vertical component,
% \[ X = H + V^{(1,0)} + V^{(0,1)} , \]
% where $H\in \C T M$, and $V\in \C T E $, and 
% write in coordinates as before
% \[ V^{(1,0)} = \sum_\alpha A_\alpha \dop{\omega_\alpha}.  \]

% For a given CR vector $\bar L \in \crb$, we can choose CR vectors $X^1, \dots, X^e$

%
%%%%%%%%%%%%%%%%%%%%%%%%%%%%%%%%%%%
%%%%%%%%%%%%%%%%%%%%%%%%%%%%%%%%%%%
%
\subsection{Constructions with $\mathcal{V}$-bundles: Dual bundles and tensor products} % (fold)
\label{sub:constructions_with_cr_bundles}
%
%%%%%%%%%%%%%%%%%%%%%%%%%%%%%%%%%%%
%%%%%%%%%%%%%%%%%%%%%%%%%%%%%%%%%%%
%
Just like the extension of a connection to the tensor algebra, we can 
use natural operations on $\mathcal{V}$-bundles. 
As a first example, given a $\mathcal{V}$-bundle $E \to M$, with derivative operator $D$, the dual 
bundle $E^*$ is also a $\mathcal{V}$-bundle if one equips it with the 
natural dual operation
\[ (D^*_{ L} \eta) (\omega) = 
 L (\eta (\omega)) - \eta (D_{L} \omega)  , \quad \omega \in \Gamma (U,E), 
\, \eta \in \Gamma (U,E^*), \, L \in \Gamma (U,\crb),
 \]
 where $U\subset M$ is any open set.
 In order to see that this 
 definition makes sense, we check  that the expression on the right hand side
 is actually linear in $\omega$: 
 \[ \begin{aligned}
 L  (\eta (a \omega)) - \eta (D_{ L}  (a \omega))  & =  {L} (a (\eta (\omega))) - 
 \eta ( ({L} a) \omega + a D_{L} \omega ) \\ 
 & =   ({L} a) \eta (\omega) + 
 a L (\eta (\omega)) - ({L} a) \eta ( \omega ) - a \eta( D_{L} \omega ) \\
 & =  a \big(L (\eta (\omega)) -  D_{L} (\eta (\omega)))\big),
 \end{aligned}
 \]
 and therefore the above definition defines $D^*_{L} \eta \in \Gamma (U, E^*)$. Let us also verify that $D^*$ satisfies the Leibniz rule: 
 \[ 
\begin{aligned}
(D^*_{L} a \eta) (\omega) &= 
 L (a \eta (\omega)) - a \eta (D_{L} \omega)  \\
&= ( a D^*_{L}  \eta + (L  a) \eta) (\omega).
\end{aligned}
 \]
 Last we check that $D^*$ satisfies
 that $[D^*_{L}, D^*_{K}] = D^*_{[L, K]} $:
 \begin{align*}
 \big([D_L^*, D_K^*]\eta \big)(\omega) &= D_L^*\big(D_K^*\eta\big)(\omega) - D_K^*\big(D_L^*\eta\big)(\omega) \\
 & = L\big(D_K^*\eta\big)(\omega) - D_K^*\eta(D_L\omega) -  K\big(D_L^*\eta\big)(\omega) +D_L^*\eta(D_K\omega)\\
 & = L\big(K\eta(\omega) - \eta(D_K\omega)\big) - K\eta(D_L\omega) + \eta(D_KD_L \omega)\\
 &\quad - K\big(L\eta(\omega) - \eta(D_L\omega)\big) + L\eta(D_K\omega) - \eta(D_LD_K \omega)\\
 & = [L,K]\eta(\omega) - \eta(D_{[L,K]}\omega).
 \end{align*}

Now let $E$ and $F$ be $\mathcal{V}$-bundles over $M$ with derivative operators $D^E$ and $D^F$, 
respectively. We then define a derivative operator on $E \otimes F$  by
\[ D_L^{E\otimes F} \omega\otimes \eta = D_L^E \omega \otimes \eta + \omega \otimes D_L^F \eta.   \]
One checks that the operator defined in this way satisfies all of the required properties.
In particular, if $E$ and $F$ are as above, then $\Hom (E,F)= E^* \otimes F $ is a $\mathcal{V}$-vector 
bundle, with the bundle operation $D$ defined by 
\[ (D_{L} A) (\omega) = D^F_{L} (A(\omega)) - A (D^E_{L} \omega).   \]
% subsection constructions_with_cr_bundles (end)

%
%%%%%%%%%%%%%%%%%%%%%%%%%%%%%%%%%%%
%%%%%%%%%%%%%%%%%%%%%%%%%%%%%%%%%%%
%
\subsection{Subbundles} % (fold)
\label{sub:subbundles}
%
%%%%%%%%%%%%%%%%%%%%%%%%%%%%%%%%%%%
%%%%%%%%%%%%%%%%%%%%%%%%%%%%%%%%%%%
%
We say that  a subbundle $F \subset E$ of a $\mathcal{V}$-vector bundle 
$E$ over $M$ is a $\mathcal{V}$-subbundle if the differential operator 
$D^E_{L}$ has the property that it maps sections of 
$F$ to sections of $F$, i.e. if 
\[ D^E_{ L} \omega \in \Gamma (U,F), \quad \omega \in \Gamma (U,F), \]
for every open set $U\subset M$.
In that case, the restriction of $D^E$ to sections of $F$ of 
course defines a $\mathcal{V}$-vector bundle on $F$. 

\begin{lem}\label{lem:perp} Let $E$ be a $\mathcal{V}$-bundle, and
	let $F\subset E$ be a vector subbundle. Then $F^\perp \subset E^*$ 
	is a $\mathcal{V}$-subbundle of $E^*$ if and only if $F$ is a $\mathcal{V}$-subbundle
	of $E$. 
\end{lem}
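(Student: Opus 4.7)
The plan is to establish both implications by direct computation using the defining formula of the dual connection
\[ (D^*_L \eta)(\omega) = L(\eta(\omega)) - \eta(D_L\omega), \]
together with two standard facts: first, that whenever $F \subset E$ is a smooth vector subbundle, its annihilator $F^\perp \subset E^*$ is again a smooth vector subbundle; and second, that under the canonical fiberwise identification $E^{**} = E$, the double annihilator satisfies $(F_p^\perp)^\perp = F_p$ at every point $p$.

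For the direction ``$F$ a $\crb$-subbundle $\Rightarrow$ $F^\perp$ a $\crb$-subbundle'', I take arbitrary $\eta \in \Gamma(U, F^\perp)$, $\omega \in \Gamma(U, F)$, and $L \in \Gamma(U,\crb)$, and compute
\[ (D^*_L \eta)(\omega) \;=\; L(\eta(\omega)) - \eta(D_L \omega) \;=\; L(0) - 0 \;=\; 0, \]
where the first term vanishes because $\eta$ annihilates sections of $F$ identically, and the second vanishes because $D_L\omega \in \Gamma(U,F)$ by hypothesis. Since this holds for every $\omega \in \Gamma(U,F)$, we conclude $D^*_L \eta \in \Gamma(U, F^\perp)$.

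For the converse, I assume $F^\perp$ is a $\crb$-subbundle of $E^*$ and test $D_L\omega$ against sections of $F^\perp$. For any $\eta \in \Gamma(U, F^\perp)$, the same formula rearranged gives
\[ \eta(D_L\omega) \;=\; L(\eta(\omega)) - (D^*_L\eta)(\omega) \;=\; 0 - 0 \;=\; 0, \]
since $\eta(\omega) \equiv 0$ and $D^*_L\eta$ is a section of $F^\perp$ by hypothesis, hence annihilates $\omega \in \Gamma(U,F)$. Evaluating at a point $p \in U$, and using that local sections of $F^\perp$ span the fiber $F^\perp_p$, we deduce $(D_L\omega)_p \in (F^\perp_p)^\perp = F_p$. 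Therefore $D_L\omega \in \Gamma(U,F)$, as required.

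There is no serious obstacle here; the argument is symmetric once one observes that the mixed term $L(\eta(\omega))$ always vanishes when pairing a section of $F$ with a section of $F^\perp$, and the only nontrivial point is the passage from ``$\eta(D_L\omega) = 0$ for all local sections $\eta$ of $F^\perp$'' to ``$D_L\omega$ lies pointwise in $F$'', which is handled by the standard fact that sections of a smooth subbundle span its fibers together with the double-annihilator identity.
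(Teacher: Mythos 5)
Your argument is correct and is essentially the same as the paper's: the identity $\omega^*(D_L\omega) = -(D_L^*\omega^*)(\omega)$ for $\omega \in \Gamma(U,F)$, $\omega^* \in \Gamma(U,F^\perp)$ is the whole content, and both directions fall out of it. You simply spell out the pointwise spanning and double-annihilator step that the paper leaves implicit.
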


\begin{proof}
	If $\omega\in \Gamma(U,F)$
	is a section of $F$, 
	and $\omega^*\in \Gamma(U,F^\perp)$ is a section of $F^\perp$,
	we have 
	\[ \omega^* (D_{L} \omega ) =  - (D_{L}^* \omega^* ) (\omega) \]
	and hence  the claim follows.
\end{proof}

% subsection subbundles (end)
%
%%%%%%%%%%%%%%%%%%%%%%%%%%%%%%%%%%%
%%%%%%%%%%%%%%%%%%%%%%%%%%%%%%%%%%%
%
\subsection{Factor bundles}
%
%%%%%%%%%%%%%%%%%%%%%%%%%%%%%%%%%%%
%%%%%%%%%%%%%%%%%%%%%%%%%%%%%%%%%%%
%
The factor bundle $E/F$ is a $\mathcal{V}$-bundle with the derivative operator
induced from $E$ if $F$ is a $\mathcal{V}$-subbundle. For the dual bundles we have
the equalities 
\[ \left( E/F \right)^* = F^\perp, 
\quad F^*  = \left( E^* / F^\perp \right) \]
not only as vector bundles, but also as $\mathcal{V}$-bundles. 

%
%%%%%%%%%%%%%%%%%%%%%%%%%%%%%%%%%%%
%%%%%%%%%%%%%%%%%%%%%%%%%%%%%%%%%%%
%
\subsection{Extension of the derivative operator to $\crb + \bar \crb$} 
%
%%%%%%%%%%%%%%%%%%%%%%%%%%%%%%%%%%%
%%%%%%%%%%%%%%%%%%%%%%%%%%%%%%%%%%%
%
If $E$ is a $\mathcal{V}$-bundle over $M$, it is tempting to try to define
\[ D_{\bar L} \omega = \overline{ D_{ L} \bar \omega }. \]
This needs a conjugation operator on $E$, or equivalently, a 
maximally totally real subbundle $R\subset E$. We shall thus 
assume that we are given a 
real subbundle $R$ of $E$ with the property that
\[ E = R \oplus i R; \]
the conjugation operator on $E$ is then defined by $v + i w \mapsto v - i w$. 
Indeed, the following holds. 
\begin{lem}
Let $R$ be a maximally real subbundle of $E$. Then $\sigma_R (v+i w) = v-iw$ for 
$v,w \in R$ defines an antilinear involution of $E$. On the other hand, 
given an antilinear involution $\sigma$ of $E$, the set $R_\sigma = \left\{ v \colon \sigma (v) = v \right\}$ is a maximally real subbundle. This sets up a one-to-one
correspondence of antilinear involutions of $E$ on the one hand and maximally real subbundles of $E$ on the other hand.
\end{lem}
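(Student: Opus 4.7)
The plan is to verify both constructions fiberwise by linear algebra and then argue that smoothness and constant rank transfer, so the correspondence lives at the bundle level. The two directions $R \mapsto \sigma_R$ and $\sigma \mapsto R_\sigma$ are essentially independent, and the mutual inverse property reduces at the end to a one-line computation in each direction.

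For the first direction, the decomposition $E = R \oplus iR$ represents every fiber element uniquely as $v + iw$ with $v,w \in R$, so $\sigma_R(v+iw) = v-iw$ is well-defined pointwise. Additivity is automatic; antilinearity follows from $\sigma_R(i(v+iw)) = \sigma_R(-w+iv) = -w - iv = -i\sigma_R(v+iw)$; the involution property is $\sigma_R(v-iw) = v+iw$. Smoothness of $\sigma_R$ comes from the fact that $R$ is a smooth real subbundle of $E$, so the projections $E \to R$ and $E \to iR$ determined by the direct sum are smooth bundle maps, and $\sigma_R$ is their difference.

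For the second direction, I define $R_\sigma$ fiberwise as the fixed-point set of $\sigma$. It is a real subspace, since antilinearity of $\sigma$ specialised to real scalars gives $\mathbb{R}$-linearity. To see $R_\sigma \cap iR_\sigma = 0$: if $v = iw$ with $\sigma(v) = v$ and $\sigma(w) = w$, then $\sigma(v) = -i\sigma(w) = -iw = -v$, forcing $v = 0$. For $R_\sigma + iR_\sigma = E$, any $v$ splits as $v = \tfrac{v+\sigma(v)}{2} + i\cdot \tfrac{v-\sigma(v)}{2i}$, and a direct check using $\sigma^2 = \mathrm{id}$ and antilinearity shows both summands lie in $R_\sigma$. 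The delicate point, and what I expect to be the main obstacle, is ensuring that $R_\sigma$ is a smooth subbundle rather than merely a fiberwise family of real subspaces; this I would handle by observing that $v \mapsto \tfrac{v+\sigma(v)}{2}$ is a smooth $\mathbb{R}$-linear idempotent on $E$ (viewed as a real bundle) whose image is exactly $R_\sigma$, of constant fiber rank equal to the complex rank of $E$ by the decomposition above, so that the image is a smooth subbundle by the standard smooth-idempotent criterion.

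Finally, the two constructions are mutually inverse. For $R_{\sigma_R} = R$, an element $v+iw$ with $v,w \in R$ is $\sigma_R$-fixed iff $v - iw = v + iw$ iff $w = 0$, recovering $R$. For $\sigma_{R_\sigma} = \sigma$, decompose $v \in E$ as $v_1 + iv_2$ with $v_1,v_2 \in R_\sigma$; then $\sigma(v) = \sigma(v_1) + \sigma(iv_2) = v_1 - iv_2 = \sigma_{R_\sigma}(v)$. These identities are bookkeeping once the fiberwise structure and the smooth-subbundle issue are in place, so no further obstacle arises.
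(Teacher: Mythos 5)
The paper states this lemma without a proof of its own---it is recorded as a routine fact and the text moves on directly to the dual bundle construction---so there is no ``paper approach'' to contrast yours with. Your argument is correct and complete. The fiberwise linear algebra (well-definedness of $\sigma_R$ from the direct sum, antilinearity and the involution property, the fixed-point computation showing $R_\sigma\cap iR_\sigma=0$ and $R_\sigma+iR_\sigma=E$ via $v=\tfrac{v+\sigma(v)}2+i\cdot\tfrac{v-\sigma(v)}{2i}$, and the two mutual-inverse checks) is all as one would expect. The one point that genuinely needs an argument, and which you correctly identify and handle, is that $R_\sigma$ is a \emph{smooth subbundle} and not merely a fiberwise family of subspaces: passing through the smooth $\mathbb R$-linear idempotent $\tfrac12(\mathrm{id}+\sigma)$ on the underlying real bundle, whose image has constant real rank $r=\dim_{\mathbb C}E$, is the standard and correct way to get this. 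Likewise, deducing smoothness of $\sigma_R$ from smoothness of the two projections associated to the direct sum $E=R\oplus iR$ is fine. No gaps.
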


% Many of the bundles we have considered so far are 
% complexifications of real bundles and thus have natural choices for 
% such a subbundle $R$: The real (co)tangent bundle $TM \subset \C TM$
% ($T^*M \subset \C T^* M$). 
If $E$ is endowed with a maximally totally 
real subbundle $R\subset E$, so is $E^*$; in fact, 
we can identify 
\[  R^*_x = \{ w \in E^*_x : w(v) \in \R \text { for all } v\in R_x  \}.
\]
In coordinates, if we assume that $R$ is spanned by the 
$\R$-linear span of the sections $\eta^1 , \dots, \eta^r$, then in 
coordinates $\eta_1, \dots , \eta_r$ defined through
$\eta = \eta_\alpha \eta^\alpha$ we have $\sigma (\eta)_\alpha = \overline{\eta_\alpha}$. If we are using a different frame $\omega^\alpha = T^\alpha_\beta {\eta}^\beta $, $\eta^\alpha = {\tilde T}^\alpha_\beta \omega^\beta$ then the conjugation $\sigma$ has the following
coordinate expression:
\[ \sigma(\omega_\alpha \omega^\alpha) = (\overline{ \omega}_\alpha {\overline T}_\beta^\alpha \tilde T^\beta_\gamma) \omega^\gamma.  \]
In other words, $\omega$ transforms  by $\omega\mapsto M \bar \omega$ with the matrix $M = T^{-1} \bar T$, if we identify sections of $E$ with column vectors
\[ \omega = \begin{pmatrix}
	\omega_1 \\ \vdots \\ \omega_r
\end{pmatrix}; \]
$T_\beta^\alpha$ is then identified with the matrix $T$ by taking $\beta$ as the 
row and $\alpha$ as the column index. 
 Note that $\bar M = M^{-1}$ and that every antilinear involution 
can be written in this way.

Given a maximally real subbundle (or equivalently, an involution $\sigma$), 
we define the extension of $D_{L}$ to $\bar \crb$ by the formula
\[ D_{\bar L} \omega = \sigma (D_{L} \sigma \omega). \]
In coordinates, this extends $D_\alpha^\beta$ to $\bar \crb$ by 
\[ D_{\bar L} \omega = (\bar L + D(\bar L)) \omega, \qquad D(\bar L) = M \bar L \bar M + M \overline{D(L)} \bar M. \]
%
%%%%%%%%%%%%%%%%%%%%%%%%%%%%%%%%%%%
%%%%%%%%%%%%%%%%%%%%%%%%%%%%%%%%%%%
%
\section{Integrability}
%
%%%%%%%%%%%%%%%%%%%%%%%%%%%%%%%%%%%
%%%%%%%%%%%%%%%%%%%%%%%%%%%%%%%%%%%
%

This section is not strictly speaking necessary for the further development, but illustrates nicely the use of
the bundle calculus introduced above. 

\begin{defn}
	\label{def:integrability} Let $E$ be a $\mathcal{V}$-bundle over $M$. 
	We say that $E$ is locally integrable at $p$ if there 
	exist sections solutions defined on some open neighborhood $U$ of $p$, $\omega^1, \dots , \omega^e\in \Gamma (M,E)$, 
	such that $\{ \omega^1 (p), \dots \omega^r (p) \}$ spans
	$E_p$.  We say that $E$ is locally integrable if it is locally integrable 
	at every $p\in M$.
      \end{defn}

      \begin{rmk}
        Definition~\ref{def:integrability} is the reason we require that $D$ is partially flat, because if $D$ is locally
        integrable, it is partially flat. 
      \end{rmk}
We compute that for each $j = 1, \dots , n$, if 
$\sigma^1 , \dots , \sigma^r$ is a set of sections such that 
$\{ \sigma^1 (p) , \dots , \sigma^r (p) \}$ spans $E_p$, then we 
have for $\omega^\alpha = \sum_\beta \lambda^\alpha_\beta \sigma^\beta $ and for $j = 1,\dots ,n $ that
\[
\begin{aligned}
	D_{L_j} \omega^\alpha &= 
	D_{L_j} \sum_\beta \lambda^\alpha_\beta \sigma^\beta  \\
	&= \sum_\beta (L_j \lambda^\alpha_\beta) \sigma^\beta + 
	\sum_\beta \lambda^\alpha_\beta \left( D_{L_j}\sigma^\beta \right) \\
	&= \sum_\gamma ( L_j \lambda^\alpha_\gamma) \sigma^\gamma + 
	\sum_{\beta ,\gamma}\lambda^\alpha_\beta \left( D_{j,\gamma}^
	\beta \sigma^\gamma \right) \\ 
	&= \sum_\gamma \left(( L_j \lambda^\alpha_\gamma) + 
	\sum_{\beta}\lambda^\alpha_\beta D_{j,\gamma}^
	\beta \right) \sigma^\gamma.
\end{aligned}
\]
With the $e\times e$ matrix $\Lambda  = (\lambda_\gamma^\alpha)_{\alpha,\gamma}$
and $D_j = (D_{j,\gamma}^\beta)_{\gamma,\beta} $ we see that the equations
\[ 	D_{ L_j} \omega^\alpha  = 0, \quad j=1,\dots, n, \quad \alpha=1,\dots, e, \]
are equivalent to 
\[ L_j \Lambda = - \Lambda D_j, \quad j=1,\dots ,n. \]

% We shall now assume that if we consider the space of 
% CR sections of $E$, which we denote by $\Gamma_{CR} (M,E)$, has
% the property that it spans a subbundle $\tilde E \subset E$,
% i.e. that 
% \[ \dim_\C \Gamma_{CR} (M,E) (p) = \dim_C \{ \omega(p) \colon
%  \omega\in\Gamma_{CR} (M,E) \}\]
%  is constant for $p\in M$. The subbundle $\tilde E \subset E$
% is obviously a CR subbundle. 

We shall now examine certain inheritance properties of integrability.

\begin{prop}\label{pro:integrable2} If $E$ is locally integrable, 
and $F\subset E$ is a $\mathcal{V}$-subbundle, then $F$ is locally integrable. 
\end{prop}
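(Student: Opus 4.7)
My plan is to use the local integrability of $E$ to reduce to a trivial connection, then to exploit the $\mathcal{V}$-subbundle condition to express a frame of $F$ by scalar solutions.

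Concretely, let $p\in M$, and choose solutions $\omega^1,\dots,\omega^r\in\Gamma(U,E)$ on a neighborhood $U$ of $p$ which form a frame of $E$ over $U$; such sections exist by local integrability of $E$. After relabeling, I may assume that $\omega^1(p),\dots,\omega^e(p)$ span $F_p$, where $e=\mathrm{rank}\,F$. Shrinking $U$ if necessary, the projection $F_x\to \mathbb{C}^e$ onto the first $e$ coordinates in the $\omega$-frame remains an isomorphism for every $x\in U$, so $F$ is locally the graph of a linear map: there are smooth functions $a^\gamma_\beta\colon U\to\mathbb{C}$ for $\beta=1,\dots,e$ and $\gamma=e+1,\dots,r$ such that, for every $\beta\le e$, the section
\[
\eta^\beta := \omega^\beta + \sum_{\gamma>e} a^\gamma_\beta\,\omega^\gamma
\]
is a section of $F$, and the collection $\eta^1,\dots,\eta^e$ forms a frame of $F$ over $U$ (they project to the standard basis in the first $e$ coordinates, hence are linearly independent at $p$ and nearby).

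It remains to show that each $\eta^\beta$ is a solution. Since $\omega^1,\dots,\omega^r$ are solutions, the connection matrix of $D$ in this frame vanishes, and a direct computation gives
\[
D_L \eta^\beta = D_L \omega^\beta + \sum_{\gamma>e}(L a^\gamma_\beta)\,\omega^\gamma = \sum_{\gamma>e}(L a^\gamma_\beta)\,\omega^\gamma
\]
for every $L\in\Gamma(U,\mathcal{V})$. The crucial step is the following: because $F$ is a $\mathcal{V}$-subbundle and $\eta^\beta$ is a section of $F$, the right-hand side is again a section of $F$. But a vector in $F_x$ is completely determined by its first $e$ coordinates in the $\omega$-frame, and $D_L\eta^\beta(x)$ has vanishing first $e$ coordinates. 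Hence $D_L\eta^\beta=0$, equivalently $L a^\gamma_\beta=0$ for all $\gamma>e$ and all $L\in\Gamma(U,\mathcal{V})$ (the $a^\gamma_\beta$ are scalar solutions), which concludes the proof.

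The only real obstacle is justifying the vanishing step above, and this is precisely where the $\mathcal{V}$-subbundle hypothesis enters decisively: without it, one cannot close the graph computation, since nothing would prevent the derivatives $L a^\gamma_\beta$ from being nonzero. The argument can equivalently be phrased in the quotient $E/F$: the images $\pi(\omega^{e+1}),\dots,\pi(\omega^r)$ form a solution frame of $E/F$ near $p$, one expands $\pi(\omega^\beta)=\sum_{\gamma>e} b^\beta_\gamma\,\pi(\omega^\gamma)$ for $\beta\le e$, and applying $D_L$ shows $L b^\beta_\gamma=0$; then $\omega^\beta-\sum_{\gamma>e} b^\beta_\gamma\,\omega^\gamma$ is a solution lying in $F$, giving the same frame up to sign.
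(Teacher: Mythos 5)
Your proof is correct and follows essentially the same route as the paper's: starting from a solution frame $\omega^1,\dots,\omega^r$ of $E$, you normalize a frame of $F$ so that it is the graph of a linear map over the first $e$ coordinates, observe that $D_L\eta^\beta$ then lies in the span of $\omega^{e+1},\dots,\omega^r$, and use the $\mathcal{V}$-subbundle hypothesis to conclude it is also a section of $F$, hence zero. The only cosmetic difference is that you spell out the graph normalization explicitly, while the paper states it as the WLOG condition $T^\beta_\alpha=\delta^\beta_\alpha$ for $\alpha,\beta\le f$.
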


\begin{proof}
Let $\omega^1, \dots, \omega^r$ be a local basis of sections solutions of $E$. 
Let $\eta^\beta = T_\alpha^\beta \omega^\alpha$ be a local basis
of sections of $F$, where $\beta = 1,\dots, f = \dim_\C F$; without 
loss of generality we can assume that $T_\alpha^\beta = \delta_\alpha^\beta$
for $\alpha,\beta = 1,\dots, f$.
%
%We can always change $\omega^1, \dots, \omega^r$ by a linear combination such that $T_\alpha^\beta = \delta_{\alpha,\beta}$ if $\alpha,\beta = 1, \dots, f$, and $T_\alpha^\beta$ otherwise in a given point. Therefore in a neighborhood of this point we have that the matriz $(T_\alpha^\beta)_{\alpha,\beta = 1}^f$ is invertible, thus replacing $\eta^1, \dots, \eta^f$ by a smooth linear combination of them, we achieve the desired for $T_ \alpha^\beta$.
%
 Now $D_{ L} \eta^\beta$ is a linear combination of $\omega^{f+1} , \dots , \omega^r$ and also an element of $F$,
i.e. $D_{ L} \eta^\beta = 0$ for $\beta = 1,\dots, f$. 
\end{proof}

\begin{prop}
\label{pro:integrable2} If $F\subset E$ is  a $\mathcal{V}$-subbundle, then $F^\perp$
is locally integrable if and only if $(E/F)^*$ is. 
\end{prop}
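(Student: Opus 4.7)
The plan is to deduce this as a direct consequence of the identification $(E/F)^* = F^\perp$ as $\mathcal{V}$-bundles, which was already asserted in the Factor Bundles subsection just above. Since local integrability is a property defined purely in terms of the $\mathcal{V}$-bundle structure (existence of a spanning set of solutions, i.e.\ sections $\omega$ with $D_L \omega = 0$ for all $L \in \Gamma(U,\crb)$), any isomorphism of $\mathcal{V}$-bundles preserves it. Thus the proposition is a corollary of that identification.

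To make the argument self-contained, I would first spell out the natural map $\Phi\colon F^\perp \to (E/F)^*$ defined by $\Phi(\eta)([\omega]) = \eta(\omega)$ for $\eta \in F^\perp$, $\omega \in E$. This is well-defined because $\eta$ annihilates $F$, and is a fiberwise linear isomorphism of vector bundles. Then I would verify that $\Phi$ intertwines the partial connections on the two sides: for $L\in \Gamma(U,\crb)$, $\eta\in \Gamma(U, F^\perp)$, and $\omega \in \Gamma(U,E)$, one computes
\[
\Phi(D^*_L \eta)([\omega]) = (D^*_L \eta)(\omega) = L(\eta(\omega)) - \eta(D_L \omega),
\]
while on the other side, using the induced connection $\bar D$ on $E/F$,
\[
(D^*_L \Phi(\eta))([\omega]) = L(\Phi(\eta)([\omega])) - \Phi(\eta)(\bar D_L [\omega]) = L(\eta(\omega)) - \eta(D_L\omega),
\]
where the last equality uses that $\bar D_L [\omega] = [D_L \omega]$ and $\eta$ kills $F$. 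Hence $\Phi\circ D^* = D^* \circ \Phi$, so $\Phi$ is an isomorphism of $\crb$-bundles.

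Given this, if $\{\eta^1,\dots,\eta^k\}$ is a local frame of solutions of $F^\perp$ at $p$, then $\{\Phi(\eta^1),\dots,\Phi(\eta^k)\}$ is a local frame of solutions of $(E/F)^*$ at $p$, and conversely; so one is locally integrable if and only if the other is. The only subtlety worth noting is the verification that $\Phi$ respects the connections, but this is routine from the definitions of $D^*$ on $E^*$ and of the induced connection on $E/F$; no obstacle remains beyond bookkeeping.
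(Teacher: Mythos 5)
Your proposal is correct and matches the paper's (implicit) approach: the proposition is stated immediately after the Factor Bundles subsection precisely because it is a direct consequence of the identification $(E/F)^* = F^\perp$ as $\mathcal{V}$-bundles, which is what you spell out. The paper leaves the verification to the reader; your computation that $\Phi$ intertwines $D^*$ with the dual of the induced connection supplies exactly the missing bookkeeping.
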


\begin{prop}
\label{pro:integrable} Assume that  $F \oplus G = E$ for $\mathcal{V}$-subbundles $F, G$. If $G$ is locally integrable, then  $F^\perp$ is. 
\end{prop}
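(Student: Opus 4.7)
My plan is to produce, on a neighborhood of any point $p\in M$, a collection of sections of $F^\perp$ which are solutions and which form a frame. The natural candidates come from pairing a frame of solutions for $G$ (available by hypothesis) with an arbitrary local frame of $F$ and then passing to the dual basis in $E^*$.

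Concretely: since $G$ is locally integrable, choose near $p$ a frame $\sigma^1,\dots,\sigma^g$ of $G$ consisting of solutions, i.e. $D^E_L\sigma^\beta = D^G_L\sigma^\beta = 0$ for all $L\in\Gamma(U,\crb)$ and $\beta=1,\dots,g$. Next, choose any local frame $\sigma^{g+1},\dots,\sigma^r$ of $F$. Because $E=F\oplus G$, the collection $\sigma^1,\dots,\sigma^r$ is a local frame of $E$, and I take $\sigma_1,\dots,\sigma_r\in\Gamma(U,E^*)$ to be the dual frame. By construction $\sigma_1,\dots,\sigma_g$ annihilate $\sigma^{g+1},\dots,\sigma^r$, hence vanish on $F$, so they are sections of $F^\perp$; and since $\dim F^\perp = r-\dim F = g$, they in fact form a frame of $F^\perp$ on $U$.

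It remains to check that $\sigma_1,\dots,\sigma_g$ are solutions of the dual $\crb$-structure on $E^*$. Using the defining formula for $D^*$ from Section~\ref{sub:constructions_with_cr_bundles},
\[
(D^*_L\sigma_\alpha)(\sigma^\beta) \;=\; L(\sigma_\alpha(\sigma^\beta))\;-\;\sigma_\alpha(D^E_L\sigma^\beta) \;=\; -\sigma_\alpha(D^E_L\sigma^\beta),
\]
for $1\le\alpha\le g$. I split on $\beta$: if $\beta\le g$ then $D^E_L\sigma^\beta=0$ because $\sigma^\beta$ is a solution of $G$; if $\beta>g$ then $\sigma^\beta\in F$ and, because $F$ is a $\crb$-subbundle, $D^E_L\sigma^\beta\in F$, on which $\sigma_\alpha$ vanishes. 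In either case $(D^*_L\sigma_\alpha)(\sigma^\beta)=0$, so $D^*_L\sigma_\alpha=0$, and the restriction to $F^\perp$ of this derivative (which agrees with the induced connection on the subbundle $F^\perp\subset E^*$) also vanishes.

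I expect no serious obstacle: the whole point is that the dual frame decouples $F$ from $G$, so the two uses of the connection pick up one term each, both of which vanish for orthogonal reasons — integrability of $G$ on one side, $\crb$-subbundle-ness of $F$ on the other. The only mild point worth recording is that we do need $F$ to be a $\crb$-subbundle (not merely a subbundle) in order to conclude $D^E_L\sigma^\beta\in F$ for $\beta>g$; this is part of the hypothesis that $F\oplus G = E$ is a splitting of $\crb$-bundles.
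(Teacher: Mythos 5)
Your argument is correct and is essentially the paper's own proof, up to relabeling the indices: the paper places the $F$-frame first and the $G$-frame second, while you reverse the order, but both constructions take the dual basis to a concatenated frame with the $G$-part made of solutions, identify the $G$-dual covectors as a frame of $F^\perp$, and verify they are solutions by splitting the evaluation into the $G$-part (killed by integrability of $G$) and the $F$-part (killed because $F$ being a $\crb$-subbundle forces $D^E_L$ to preserve $F$). Your note at the end, that $F$ must be a $\crb$-subbundle for the second half of the argument, is exactly the point the paper also leans on.
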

\begin{proof}
Let $p\in M$, and denote $f = \rank F$, $g= r-f$. We choose a basis $\omega^1 , 
\dots , \omega^f$, and $\omega^{f+1}, \dots \omega^r$ of sections of $F$ and $G$ near $p$, with $\omega^{f+\beta}$ being solutions, $\beta = 1,\dots,g$.  
 Now consider the dual forms $\omega_{f+1}, \dots, \omega_{r} \in E^* $, which form a basis of $F^\perp$.  We have 
 \[ D_{L}^* \omega_{f+\alpha} (\omega^{f+\beta}) = -\omega_{f+\alpha} (D_{L} \omega^{f+\beta}) = 0, \qquad \alpha,\beta = 1,\dots ,g \]
 and since $D_L \omega^\beta$ is a section of $F$, for $F$ is a $\mathcal{V}$-subbundle, we also have
 \[ D_{L}^* \omega_{f+\alpha} (\omega^\beta) = \omega_{f+\alpha} (D_{L} \omega^\beta) = 0, \qquad \alpha = 1,\dots,g ,\quad \beta = 1,\dots, f. \]
 % \omega_{f+\alpha} (D({L })^\beta_\gamma \omega^\gamma)
 Hence, $D^*_{L} \omega_{f+\alpha} = 0$ for $\alpha = 1, \dots , g$, and 
 so $F^\perp$ is locally integrable. 
\end{proof}

\section{$\crb$-modules} % (fold)
\label{sec:cr_modules}
Assume that $E$ is a $\crb$-bundle over $M$, and consider the sheaf $\Gamma(\cdot, E)= \mathcal{E}$, or more generally
the sheaf $\mathcal{D}'(\cdot,E)=:\mathcal{D}'_E (\cdot)$, as a sheaf of modules over $\cinfty(M)$.
We say that a submodule 
$\mathcal{S} \subset \mathcal{E}$ is a $\crb$-module if for every $U\subset M$ and any section $\omega\in \mathcal{S}(U)$ and vector
field $L\in \Gamma(U,\crb)$ we have that $D_L \omega \in \mathcal{S} (U)$. Here are some examples.

\begin{exa}
  Assume that $F\subset E$ is a $\crb$-subbundle. Then $\Gamma(\cdot,F) \subset \mathcal{E}$ is a $\crb$-bundle. 
\end{exa}

\begin{exa}
  For any $U\subset M$, let $\ker D (U) := \{ \omega \in \mathcal{E} (U) \colon D_L \omega = 0 \, \forall L \in \Gamma (U,\crb) \}$.
  Then $\ker D$ is {\em not} a $\crb$-module in general, as typically $D_L a \omega = La \omega\neq 0$ for $\omega \in \ker D (U)$. 
\end{exa}

We say that a $\crb$-module $\mathcal{S}$ is locally integrable if for every open $U\subset M$ there exist finitely many
$\crb$-sections $\sigma_1, \dots, \sigma_r \in \mathcal{S}(U)$ spanning $\mathcal{S} (U)$ (over $\cinfty(M)$).

Let $\mathcal{S}\subset \mathcal{E}$ be an arbitrary submodule. Then there exists a smallest $\crb$-submodule $\hat{\mathcal{S}}$ which contains $\mathcal{S}$
(since the intersection of two $\crb$-submodules is again one); we shall call this the $\crb$-hull of $\ssheaf$.
We can describe $\hat{\mathcal{S}}$ in the following way.

\begin{lem}
  \label{lem:sclosure} Given a submodule  $\mathcal{S}\subset \mathcal{E}$, we define an ascending chain
  of submodules inductively by setting $\mathcal{S}_0 = \mathcal{S}$ and for every $j>0$ and $U\subset M$
  \[ \mathcal{S}_j =
    \left\{ \sum_\ell a_\ell \omega_\ell + \sum_k b_k  D_{\bar L_k} \eta_k \colon a_\ell, b_k \in \cinfty (U), \, \omega_\ell, \eta_k \in \mathcal{S}_{j-1} (U), \, L_k \in \Gamma(U,\crb) \right\}. \]\
  Then $\hat{\mathcal{S}} = \bigcup_{j=0}^\infty \mathcal{S}_j$. Since $ \dim \ssheaf(p)$ is upper semi-continuous on $M$,
  there exists a dense open subset $\hat M\subset M$ which decomposes as $\hat M = \cup \hat M_j$ with $\dim \ssheaf(p) = j$,
  where $1\leq j \leq \dim_\C E=:r$. It then holds that on a dense open subset $\tilde M_j \subset \hat M_j$, $\hat \ssheaf|_{\tilde M_j} =
  \ssheaf_{j}|_{\tilde M_j}$. In particular, $\hat \ssheaf$ is generically finitely generated if $\mathcal{S}$ is. 
\end{lem}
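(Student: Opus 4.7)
The plan is to verify the two assertions in order, first the identification $\hat{\ssheaf} = \bigcup_j \ssheaf_j$ and then the dimension-theoretic statements about the stratification.

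For the first claim, the inclusion $\hat{\ssheaf} \supset \bigcup_j \ssheaf_j$ is proved by induction on $j$: any $\crb$-submodule containing $\ssheaf = \ssheaf_0$ is closed under $\cinfty$-linear combinations and under the derivative operators appearing in the inductive step, so it contains each $\ssheaf_j$. For the reverse inclusion, I would verify that $\bigcup_j \ssheaf_j$ is itself a $\crb$-submodule. Closure under $\cinfty(U)$-linear combinations is built into the definition of $\ssheaf_j$, so the work lies in establishing closure under $D_L$ for $L\in \Gamma(U,\crb)$. Arguing by induction on $j$, I would write a generic element $\omega\in\ssheaf_j$ as $\sum a_\ell\omega_\ell + \sum b_k D_{\bar L_k}\eta_k$ and compute, by the Leibniz rule,
\begin{equation*}
  D_L\omega \;=\; \sum\bigl((La_\ell)\omega_\ell + a_\ell D_L\omega_\ell\bigr) \;+\; \sum\bigl((Lb_k)D_{\bar L_k}\eta_k + b_k D_L D_{\bar L_k}\eta_k\bigr);
\end{equation*}
the commutator identity $D_L D_{\bar L_k} = D_{\bar L_k} D_L + [D_L,D_{\bar L_k}]$ and the inductive hypothesis that $D_L \omega_\ell, D_L \eta_k \in \ssheaf_j$ then exhibit each term of $D_L \omega$ as a member of $\ssheaf_{j+1}$.

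For the second assertion, the key observation is that for every $p\in M$ the chain of subspaces $\ssheaf_0(p) \subset \ssheaf_1(p) \subset \cdots$ of the finite-dimensional space $E_p$ must stabilize in finitely many steps at the subspace $\hat{\ssheaf}(p)$. Upper semicontinuity of $p \mapsto \dim \ssheaf(p)$ gives the dense open stratification $\hat M = \bigcup_j \hat M_j$. On each stratum $\hat M_j$ I would exploit lower semicontinuity of the dimension of the span of finitely many chosen sections evaluated pointwise: at any $p_0 \in \hat M_j$ where the chain stabilizes by step $j$, linear independence of a generating set of $\ssheaf_j$ persists on an open neighborhood of $p_0$, cutting out the dense open $\tilde M_j \subset \hat M_j$ on which $\hat{\ssheaf} = \ssheaf_j$. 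Generic finite generation of $\hat{\ssheaf}$ then follows because, starting from finitely many generators of $\ssheaf$, each $\ssheaf_j$ is spanned by finitely many iterated $D_{\bar L}$-derivatives of them.

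The main difficulty I anticipate is in the closure argument for $\bigcup_j \ssheaf_j$: the flatness identity $[D_L,D_K]=D_{[L,K]}$ from the definition of a $\crb$-bundle applies only for $L,K\in\Gamma(U,\crb)$, so the commutator $[D_L,D_{\bar L_k}]$ must be controlled via the extension of the derivative operator to $\crb+\bar\crb$ constructed in Section~\ref{sec:V_bundles}; pinning down that this commutator reduces to an admissible operation on $\ssheaf_{j-1}$, so the inductive step lands in $\ssheaf_{j+1}$ rather than running off to an uncontrolled level, is the crux of the proof and also supplies the sharp stabilization index appearing in the second assertion.
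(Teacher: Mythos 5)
Your argument for the first part goes astray at the crucial step, and you yourself flag the resulting gap. The difficulty you describe — controlling $[D_L, D_{\bar L_k}]$ — is not actually present, but your Leibniz-plus-commutator expansion manufactures it. Note that everywhere else in the paper the $\crb$-module condition is closure under $D_L$ for $L\in\Gamma(U,\crb)$ (see the definition of $\crb$-module, the proof of Lemma~\ref{lem:smoothnessV}, and the example immediately following the present lemma, which applies $\mathcal{L}_L$ with $L$ the CR vector field). Reading the inductive step as $D_{L_k}$ rather than $D_{\bar L_k}$, the closure of $\bigcup_j \ssheaf_j$ under $D_L$ is a one-line consequence of the definition: if $\omega\in\ssheaf_j$, then $D_L\omega$ appears literally as one of the generators of $\ssheaf_{j+1}$ (take a single $\eta=\omega$, $b=1$, $L_k=L$, and no $a_\ell$ terms). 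There is nothing to expand and nothing to commute. By contrast, your route requires an inductive hypothesis of the form $D_L\omega_\ell\in\ssheaf_j$ whose base case you never actually obtain from the $\bar L$ definition, and your commutator $[D_L, D_{\bar L_k}]$ is indeed uncontrolled by the flatness axiom — so as written the induction does not close. The remedy is not to repair the commutator but to abandon the expansion entirely.

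For the second part, your plan has the right ingredients — stabilization of the pointwise chain $\ssheaf_0(p)\subset\ssheaf_1(p)\subset\cdots$ inside the finite-dimensional $E_p$, plus semicontinuity of ranks — but you do not explicitly establish the lemma that actually does the work: if $\ssheaf_{j+1}(p)\subset\ssheaf_j(p)$ holds for every $p$ in an open set $U$, then $\ssheaf_{j+1}(U)\subset\ssheaf_j(U)$ as modules of sections, and hence (by induction on the chain) $\hat\ssheaf(U)=\ssheaf_j(U)$, since $D_L$ can no longer enlarge the module. This is the step that converts pointwise stabilization into sheaf-level stabilization and is what forces the generic ranks to be strictly increasing until they hit their ceiling. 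Your appeal to lower semicontinuity of the rank of a chosen generating family gives local constancy of $\dim\ssheaf_j(p)$ on an open dense set, which is useful, but by itself it does not yield $\hat\ssheaf|_{\tilde M_j}=\ssheaf_j|_{\tilde M_j}$; you still need the stabilization implication above to conclude that further $D_L$-derivatives stay inside $\ssheaf_j$. Inserting that one observation closes the argument and also delivers the generic finite generation you assert at the end.
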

\begin{proof}
The first part of the lemma is a standard verification, as the union of the $\mathcal{S}_j$ obviously is a $\crb$-submodule, and every
$\crb$-submodule containing $\mathcal{S}$ necessarily contains all of the $\mathcal{S}_j$.

Assume that on an open subset $U$, we have $\mathcal{S}_{j+1} (p) \subset \mathcal{S}_j (p)$ for every $p\in U$. Then in
particular, $\mathcal{S}_{j+1}(U) \subset \mathcal{S}_j(U)$, so that we have $\hat \ssheaf (U) = \ssheaf_j (U)$ by induction. It follows
that generically the dimensions of the spaces spanned by $\mathcal{S}_j (p)$ need to be strictly increasing. 
\end{proof}

\begin{exa}
  The genericity assumption is necessary, even in the real-analytic CR setting. Indeed, let $\Phi(z)$ be a holomorphic entire
  function on $\C$ with zeroes of order $k$ exactly at $k \in \N$, and consider the hypersurface $M \subset \C^2$ defined by  $\imag w= \real z \overline{\Phi(z)}$.
  Then the CR vector field on $M$ is given by $L = \dop{\bar z} + i (\Phi(z) + z \overline{\Phi'(z)}) \dop {\bar w}$, and
  a characteristic form is given by $\theta = 2i \partial (\imag w - \real z \overline{\Phi (z)}) = dw - (\bar z \Phi'(z) + \overline{\Phi(z)}) dz $. It follows that
  \[ (\mathcal{L}_L)^k \theta = - \overline{\Phi^{(k)} (z)} dz, \quad k\geq 2,  \]
  and so for $k\geq 2$, we need $k$ derivatives to obtain $dz$. In this example, if we let $\ssheaf$ consist of the sections of the
  characteristic bundle $\C \charforms{M}$, and we use the canonical structure of $\holforms(M)$, we have $\ssheaf_1 (U) = \holforms M$ where
  $U = \{ (z,w) \in M \colon z \notin \N \}$ and $\ssheaf_j (k) = T'M$ only if $j\geq k$. 
\end{exa}

We now turn to modules of smoothness multipliers. For this, we assume that we are given a $\crb$-submodule
$\Omega$  of the sheaf of generalized sections $\mathcal{D}'_E $.

\begin{defn}
  The module of smoothness multipliers of $ \Omega$ is defined by
  \[ \mathcal{S}(\Omega) (U) = 
    \left\{
      \eta \in \mathcal{E}^* (U) = \Gamma (U, E^*) \colon  \eta (\omega) \in \cinfty(U) \, \forall
      \omega \in \Omega
    \right\} \]
\end{defn}

We first note that this sheaf is well-defined,
since any section  $\omega \in \mathcal{D}'_E(U)$ can be evaluated along a
smooth section of $\eta \in  \mathcal{E}^*(U)$; in general, this yields a distribution $\eta(\omega)\in\mathcal{D}'(U)$, so it
makes sense to say that the section is smooth.

\begin{lem}\label{lem:smoothnessV}
  For any $\crb$-module $\Omega$ it holds that  its module of
  smoothness multipliers $\mathcal{S}(\Omega)$ is also a $\crb$-module. 
\end{lem}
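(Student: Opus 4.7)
The plan is to unwind the definition of $\mathcal{S}(\Omega)$ and then apply the defining formula of the dual connection $D^*$. Fix an open $U \subset M$, a section $\eta \in \mathcal{S}(\Omega)(U)$, and a vector field $L \in \Gamma(U, \crb)$. I need to show $D^*_L \eta \in \mathcal{S}(\Omega)(U)$, i.e.\ that $(D^*_L \eta)(\omega) \in \cinfty(U)$ for every $\omega \in \Omega(U)$.

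The key identity is the one set up in Section~\ref{sub:constructions_with_cr_bundles}, namely
\[
  (D^*_L \eta)(\omega) \;=\; L\bigl(\eta(\omega)\bigr) \;-\; \eta\bigl(D_L \omega\bigr),
\]
which is valid with $\omega$ a distributional section because $\eta$ is smooth (the pairing $\eta(\omega)$ is a well-defined distribution, and $D_L\omega$ is also a well-defined distributional section via its action on test objects). Once this formula is in hand, the proof is a one-line verification.

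For the first term: the hypothesis $\eta \in \mathcal{S}(\Omega)(U)$ says exactly that $\eta(\omega) \in \cinfty(U)$ for all $\omega \in \Omega(U)$. Applying the smooth vector field $L$ to a smooth function yields a smooth function, so $L(\eta(\omega)) \in \cinfty(U)$. For the second term: since $\Omega$ is a $\crb$-module and $L \in \Gamma(U,\crb)$, we have $D_L \omega \in \Omega(U)$; invoking the smoothness multiplier hypothesis once more gives $\eta(D_L\omega) \in \cinfty(U)$.

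Combining, $(D^*_L\eta)(\omega)$ is the difference of two smooth functions, hence smooth. As $\omega \in \Omega(U)$ was arbitrary, $D^*_L\eta \in \mathcal{S}(\Omega)(U)$, proving that $\mathcal{S}(\Omega)$ is closed under the dual derivation $D^*$ and is therefore a $\crb$-submodule of $\mathcal{E}^*$. There is no real obstacle here; the only point that deserves a brief remark in the write-up is that the formula for $D^*_L$ continues to make sense when $\omega$ is merely distributional, since both $\eta(\omega)$ and $D_L\omega$ are defined as distributions and the equality holds by duality with the defining relation for $D^*$ on smooth sections.
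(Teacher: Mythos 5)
Your proof is correct and takes essentially the same approach as the paper: both unwind the definition using the dual-connection identity $(D^*_L\eta)(\omega) = L(\eta(\omega)) - \eta(D_L\omega)$, observe that $\Omega$ being a $\crb$-module gives $D_L\omega \in \Omega$, and conclude each term is smooth. Your version simply spells out the one-liner in the paper at more length, including the (sensible) remark that the formula for $D^*_L$ still makes sense when $\omega$ is distributional.
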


\begin{proof}
  We have $D_L^* \eta (\omega) = L \eta(\omega) - \eta(D_L \omega) \in \cinfty(U)$
  if $L \in \Gamma (U,\crb)$, $\eta \in \mathcal{S}(M)$, and $\omega \in \Omega $, hence $D_L^* \eta \in\mathcal{S}(M) $ if $\eta\in \mathcal{S}(M)$. 
\end{proof}

Observe that for every  $\crb$-module $\Omega$ we have the
following dichotomy: If $p\notin \singsupp \Omega$, then there exists a neigbourhood $U$ of
$p$ such that $\mathcal{S}(\Omega)|_U = \Gamma (U,E^*) $; if $p\in \singsupp \Omega$, then
there exists a neighbourhood $U$ of $M$ such that $\mathcal{S}(\Omega)|_U \subsetneq \Gamma(U,E^*)$.
In the second case, a dense open subset
$V\subset U$ can be decomposed as $V= \bigcup_{j=0}^{r-1}  V_j$ such
that over each $V_j$, $\mathcal{S}(\Omega)|_{V_j} = \Gamma(\cdot,E_j^*)$, where
$E_j^* \subset E^*|_{V_j}$ is a $\crb$-subbundle of rank $j$ defined over $V_j$.

This translates to the following statement: If $\Omega$ is not smooth in a neighbourhood of $p$,
then every $q$ in a dense open subset of a neighbourhood of $p$ has a neighbourhood
$U$ on which there exists a $\crb$-subbundle $\tilde E \subset E|_U$ such that
$ \Gamma (U, \tilde E) \subset \Omega|_U $.
Indeed, assume both $\Omega$ and $\mathcal{S}(\Omega)$ are of constant rank over $U$.
Then note that
$\Omega^\perp \subset \mathcal{S}(\Omega) $ by definition,
so if we let $\tilde E= (E_j^*)^\perp$,
then $(\Omega|_U)^\perp \subset \Gamma(U, E_j^*)$,
so $\Gamma(U, \tilde E^\perp) \subset \Omega|_U$.
Furthermore, in that case the quotient section module
$\pi(\Omega|_U) \in \Gamma(U,E/\tilde E)$ is smooth.

One might be tempted to generalize this last cumbersome translation to a more
global statement, but in our setting this is not possible because the ``smoothness ranks''
of $\Omega$ can change. However, one obtains a simple criterion which forces smoothness
of $\Omega$.

\begin{defn}
  We say that a submodule $\mathcal{R} \subset \Gamma(\cdot, E)$ is {\em nondegenerate}
  if $ \hat{ \mathcal{R}} =  \Gamma(\cdot, E)$. We say that it is $k$-{\em nondegenerate}
  if $\mathcal{R}_{k-1} \subsetneq \mathcal{R}_k = \Gamma(\cdot, E)$. 
\end{defn}

\begin{prop}\label{prop:nondegenerate}
  Assume that $\Omega$ is a $\crb$-bundle and that
  $ \mathcal{R} \subset \mathcal{S}(\Omega)$ for some nondegenerate
  $\mathcal{R} \subset \Gamma(\cdot,E^*)$. Then $\Omega$ is smooth. 
\end{prop}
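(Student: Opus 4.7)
The plan is to chain together \Cref{lem:smoothnessV} with the minimality of the $\crb$-hull to force $\mathcal{S}(\Omega)$ to be everything, and then extract pointwise smoothness of $\Omega$ from a local frame of $E^*$.

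First, I would apply \Cref{lem:smoothnessV} to conclude that $\mathcal{S}(\Omega)\subset \Gamma(\cdot,E^*)$ is itself a $\crb$-module. Since by assumption $\mathcal{R}\subset \mathcal{S}(\Omega)$, and since the $\crb$-hull $\hat{\mathcal{R}}$ is defined as the smallest $\crb$-submodule of $\Gamma(\cdot,E^*)$ containing $\mathcal{R}$, it follows that
\[ \hat{\mathcal{R}} \subset \mathcal{S}(\Omega). \]
The nondegeneracy assumption on $\mathcal{R}$ says precisely that $\hat{\mathcal{R}} = \Gamma(\cdot,E^*)$, and hence $\mathcal{S}(\Omega) = \Gamma(\cdot,E^*)$.

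Next, I would translate this equality back into a smoothness statement about sections of $\Omega$. Fix an arbitrary open $U\subset M$ over which $E$ trivializes, and pick a local smooth frame $e_1,\dots,e_r$ of $E$ with dual smooth frame $\eta^1,\dots,\eta^r$ of $E^*$. For any $\omega\in\Omega(U)$ we may write $\omega = \sum_\alpha \omega_\alpha e_\alpha$, where a priori $\omega_\alpha \in \mathcal{D}'(U)$. But the coefficients are given by $\omega_\alpha = \eta^\alpha(\omega)$, and since $\eta^\alpha\in \Gamma(U,E^*) = \mathcal{S}(\Omega)(U)$, the defining property of $\mathcal{S}(\Omega)$ forces $\omega_\alpha = \eta^\alpha(\omega)\in \cinfty(U)$. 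Therefore $\omega$ is smooth on $U$, and since $U$ was an arbitrary trivializing open set, $\Omega$ is smooth.

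There is essentially no main obstacle; the argument is a formal consequence of the minimality of the $\crb$-hull together with the fact from \Cref{lem:smoothnessV} that pairing against $\mathcal{S}(\Omega)$ is closed under the action of $\crb$ via $D^*$. The only point one must be slightly careful about is that the $\crb$-hull $\hat{\mathcal{R}}$, as constructed in \Cref{lem:sclosure}, is indeed contained in every $\crb$-submodule extending $\mathcal{R}$, which is exactly the minimality property stated there and therefore applies directly to $\mathcal{S}(\Omega)$.
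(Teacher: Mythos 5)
Your proof is correct and follows essentially the same route as the paper's: invoke \Cref{lem:smoothnessV} to see that $\mathcal{S}(\Omega)$ is a $\crb$-module, use minimality of the $\crb$-hull to get $\Gamma(\cdot,E^*)=\hat{\mathcal{R}}\subset\mathcal{S}(\Omega)$, and conclude. The only addition is that you explicitly spell out the final step (pairing against a local dual frame to extract smoothness of coefficients), which the paper leaves implicit.
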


\begin{proof}
  Since $\Omega$ is a $\crb$-bundle, so is $\mathcal{S} (\Omega)$ by Lemma \ref{lem:smoothnessV}. Hence   $ \Gamma(\cdot, E^*) = \hat{\mathcal{R}} \subset \mathcal{S}(\Omega)$. 
\end{proof}

% Let's assume that $h\colon M\to M'$ is $(\crb,\crb')$-compatible. Then we define the
% infinitesimal deformations of $h$ as follows. Since $h$ is
% compatible, we have $h^* \eta \in \Gamma(h^{-1}(U),\holforms(M) ) $ if $\eta \in \Gamma(U,\holforms{M'})$.
% If $h = h_0$ is part of a family of compatible maps $h_t$ depending at least $\diffable{1}$ on $t\in (-\varepsilon, \varepsilon)$,
% we can differentiate the equation
% \[ h^*_t \eta (L) = 0  \]
% with respect to $t$. Writing in coordinates $\eta = \sum \eta^j dy_j$, $h_t = (h_{t,1},\dots, h_{t,n})$ and
% $L= \sum L_j \dop{x_j}$ we get (with the shorthand notation $\dot h = (\dot h_1, \dots, \dot h_n)= \frac{d}{dt}|_{t=0} h_t  \in \Gamma(\cdot,h^* TM')$)
% \[
%   \begin{aligned}
%     \frac{d}{dt}\biggr|_{t=0} h^*_t \eta (L) &= \frac{d}{dt}\biggr|_{t=0} \sum_{k} (\eta^k\circ h_t) L h_{t,k} \\
%                                              & = d\eta^k (\dot h ) Lh_k + (\eta^k \circ h) L\dot h_k \\
%     &= 
%   \end{aligned}
% \] 

% Section cr_modules (end)

\section{The characteristic sheaf}

We have already introduced the natural derivative operator on the bundle $\holforms(M)$, which we
are going to regard as an operator on the sheaf $\mathcal{T}'$ defined by
$\holsheaf(U) = \Gamma (U, \holforms(M))$. The characteristic sheaf $\mathcal{T}^0 \subset \mathcal{T}'$ of $M$ is defined by
\[ \charsheaf(U) := \left\{ \omega \in \holsheaf(U) \colon \omega_p \in\charforms_p M \right\}.
\]
We can now consider the $\crb$-hull $\hat{\mathcal{T}}^0$ and apply the decomposition described in \cref{lem:sclosure} to it,
yielding a disjoint union $ \hat M = \biguplus_{j=0}^{m} \hat M_j $ of the dense, open subset $\hat M$ into open
subsets on which the rank of $\hat{\mathcal{T}}^0$ is constant, meaning
\[ \hat \charsheaf(U) = \Gamma(U, E_j) , \quad  U \subset \hat M_j,   \]
where $E_j \subset \holforms(M)|_{\hat M_j} $ is a $\crb$-subbundle of $\holforms(M)$.

\begin{defn}\label{def:degeneracy}
  We say that $M$ is constant degeneracy $d$  at $p$ if $p\in \hat M_{m-d}$. We say that $M$ is nondegenerate at $p$ if it is (constantly) $0$-degenerate at $p$, i.e.
  if $p\in \hat M_{m}$. We say that $M$ is nondegenerate if $M= \hat M_m$. 
\end{defn}
An example of a nondegenerate structure was given in \cref{sub:example}.

We will need to introduce some more  terminology related to infinitesimal automorphisms. It turns out that
not all components of an infintesimal automorphisms will be smooth, and here we identify two obstructions:
First, the nondegeneracy conditions introduced in \cref{def:degeneracy} and second, ``purely real'' components of
infinitesimal automorphisms, which we are now going to introduce. In order to facilitate
notation, we define for any real vector field $X \in \vectorfields(U) := \Gamma(\cdot, TM)$)
an associated section $X^\sharp \in (\holsheaf)^*(U)$ by
\[ X^\sharp (\omega) = \omega(X).  \]
Note that any such $X^\sharp$ has $ X^\sharp(\imag \theta) = 0$ for $\theta \in \charsheaf(U)$ and that $X^\sharp$ is also defined for distributional vector fields. 

The kernel of the homomorphism $X \mapsto X^\sharp$ (where we consider $(\holsheaf)^*$ in a natural way as a real module) is
given by
\[ \mathfrak{R} (U) := \left\{ X \in \vectorfields (U) \colon X_p \in \crb_p \cap \bar \crb_p \quad \forall p\in U\right\}.
\]
Indeed, any such $X$ clearly satisfies $X^\sharp =0$, and on the other hand, if $X^\sharp =0$, then $\omega (X) = \bar \omega (X) = 0$
for any $\omega\in \holsheaf$, so $X\in \crb\cap \bar \crb$. Also note that $X^\sharp$ satisfies $\imag X^\sharp (\theta) = 0$ for
every $\theta\in \charsheaf$ and that any section $X$ of $\mathfrak{R}$ is automatically an infinitesimal automorphism:
we have $\omega([X,L]) = 0$ since $X\in\crb$ and $\crb$ is involutive. This corresponds to the fact that we do not, in
general, expect to be able to control fiber movements if we have positive-dimensional fibers on an open subset. Here are some examples.

\begin{exa}
  Consider the real involutive structure $\R_x $ generated by $\dop{x}$. Any map $x\mapsto \varphi (x)$ is an automorphism
  of this structure, and any real vector field $a(x) \dop{x}$ is an infinitesimal
  automorphism.

  A bit less trivial, we can consider product structures, for example
  the manifold $\R^4_{x,y,s,\xi}$ and the structure generated by $L=\dop{\bar z} -  i z \dop{s}$ (i.e. $z= x+iy$ and $w = s + i |z|^2$ are first
  integrals).  Any map $(x,y,s,\xi) \mapsto (x,y,s,\varphi (\xi)) $  is an automorphism
  of this structure, and any real vector field $a(\xi) \dop{\xi}$ is an infinitesimal
  automorphism. 
\end{exa}

Remember that $\imag X^\sharp (\theta)=0$ for $X \in \vectorfields$.
If we let $\mathfrak{S}\subset \holsheaf^*$ be the (real) submodule defined by 
 $Y\in \mathfrak{S} (U)$ if  $\imag Y (\theta) = 0$ for $\theta \in \charsheaf (U)$,
then we can put
\[ Y(\omega + \bar \eta) = Y(\omega) + \overline{Y(\eta)}, \quad \omega, \eta \in \holsheaf (U), \]
to extend $Y$ to $\holsheaf + \overline{\holsheaf}$ such that the extension
satisfies $\tilde Y(\bar \omega) = \overline{\tilde Y (\omega)}$. Indeed, if we are given two
representations $\omega_1 + \bar \eta_1 = \omega_2 + \bar \eta_2$, then $\omega_1 - \omega_2 = \bar \eta_2 - \bar \eta_1 \in \holsheaf(U) \cap \overline{\holsheaf}(U) $.
We claim that this means that the real and imaginary parts of $\omega_1 - \omega_2$ belong to $\charsheaf$. Assuming this claim, we get $Y(\omega_1) - Y(\omega_2)  =
\overline{Y(\eta_2)} - \overline{Y(\eta_1)} $, and so the action of $Y$ on $\holsheaf(U) + \overline{\holsheaf}(U)$ is well defined. In order to prove the claim,
let $\alpha\in \holsheaf(U) \cap \overline{\holsheaf}(U)$, then $\real \alpha (L) = \frac{1}{2} \left( \alpha (L) +\overline{\alpha (L)}  \right) =0$ as claimed.
By definition, this extension $\tilde Y$ of $Y$ satisfies $\tilde Y (\bar \omega) = \overline{\tilde Y(\omega)}$ as claimed.

 Let us now explain why we do not expect infinitesimal automorphisms to be smooth
at (constantly) degenerate points. This is due to the following observation. If $Y=X^\sharp $ is a section of $ \left( \faktor{\holsheaf}{\hat \charsheaf} \right)^* \simeq \ann \hat \charsheaf \subset \holsheaf^*$,
then in particular $Y(\theta) = 0$ for any $\theta \in \charsheaf$.  Thus generically, if $Y$ is such an infinitesimal automorphism, then $iY$ will be too; we assume that
$iY = (JX)^\sharp$ for some $JX$ (which, in the case of a CR structure, will actually be the complex structure operator applied to $X$). We now claim that for any solution $f$
of our structure, $fY$ is another solution; thus even smooth infinitesimal automorphisms might be ``as bad'' as solutions.
Indeed, if we write $f = \alpha + i \beta$, then
\[ \omega([\alpha X  -  \beta J X, L]) = \alpha \omega( [X,L]) + \beta \omega( [ JX,L]) - L (\alpha + i \beta) \omega(X) = 0. \]

\begin{exa}
  Consider the structure introduced in \cref{sub:example}. Remember that if $t = 0$, then $\mathcal{V}_{(x,y,0)} = \{a \frac{\partial}{\partial t} : a \in \mathbb{C} \}$,
  so $\mathcal{V}_{(x,y,0)} \cap \overline{\mathcal{V}}_{(x,y,0)} =  \{a \frac{\partial}{\partial t} : a \in \R \}$, 
  while $\mathcal{V}_{(x,y,t)}$. It follows that the sheaf $\mathfrak{R} = \{0\}$ (since every of its section is going to vanish outside of $t=0$),
  but there are nontrivial distributional infinitesimal automorphisms, such as $X=\delta_{0} \dop{t}$ (which is an element of $\distributions (M,\crb\cap\bar \crb)$ in
  this setting). Indeed, recall
  that we need to check that $L\omega (X) = d\omega (L,X)$ for any $\omega \in \holsheaf$; since $\holsheaf \R^3$ is spanned by
  $dZ = dx + i t^\ell dt$ and $dW = dy + i t^kdt$, and $L=\dop{t} - i t^\ell \dop{x} - it^k \dop{y}$, 
  this just boils down to checking that $L t^k \delta_0 = L t^\ell \delta_0 =0$, both of which are obvious.

  We will also use this example to shed some light on the image of the $\sharp$-operator. Note
  that the forms  $dZ , dW,d\bar Z, d\bar W$ spans $\holsheaf + \overline{\holsheaf}$, which is thus spanned by $dx, dy, t^k dt$ (if $k\leq \ell$).
  An element $Y$ of $\holsheaf^*$ given by $Y(dZ) = \alpha + i \beta$ and $Y(dW) = \gamma+ i\delta$
  satisfies $\imag Y(\theta) = 0$ if $t^k \beta = t^\ell \delta$; on the other hand, it is induced by
  by a vector field $X = a \dop{x} + b \dop{y} + c\dop{t}$ if $\alpha = a$, $\gamma = b$, $\beta = t^\ell c$, and $\delta = t^k c$.
  So even though $\imag Y(\theta) = 0$ is certainly necessary to be in the image of the $\sharp$-operator, it
  is definitely not sufficient. 
\end{exa}

This leads us to the following definition.

\begin{defn}
  We say that $M$ is regular at $p$ if for every neighbourhood $U$ of p and locally bounded vector field $X$ on $U$ giving rise to a smooth section $X^\sharp \in (\holsheaf)^*(U)$
  there exists an open neighbourhood $V$ of p such that $X|_V \in \vectorfields(V)$. 
\end{defn}

\begin{rmk}
  Regularity implies that $\dim \crb_q \cap \overline{\crb}_q =0$ almost everywhere near $p$. In particular,
   a regular nondegenerate structure is CR almost everywhere. We will discuss how to detect regularity later. 
\end{rmk}

We can now formulate the main theorem of this section and its corollaries.

\begin{thm}\label{thm:infinitesimals}
  Assume that $X$ is an infinitesimal automorphism of the involutive structure $(M, \crb)$ and that
  $X$ extends microlocally. Then there
  exists an open dense subset $\hat M$ of $M$ and a decomposition $\hat M= \biguplus_{j=0}^m \hat M_j $ such
  that on $\hat M_j$, we have that $X^\sharp \in \holsheaf$ is smooth modulo the sections $\ann \hat{\charsheaf}$ of
  an $m-j$ dimensional $\crb$-bundle. In particular, $X^\sharp$ is smooth on $\hat M_m$.  
\end{thm}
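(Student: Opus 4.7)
The starting point, already observed in the paper following \cref{def:infaut}, is that $X$ is an infinitesimal automorphism if and only if $X^\sharp$ is a distributional $\crb$-section of the canonical $\crb$-structure on $\holforms(M)^*$, so that
\[ L(X^\sharp(\omega)) = X^\sharp(D_L\omega), \qquad L\in\Gamma(U,\crb),\ \omega\in\holsheaf(U). \]
My plan is to consider the submodule
\[ \ssheaf(U) := \{\omega\in\holsheaf(U) : X^\sharp(\omega)\in\cinfty(U)\}, \]
namely the module of smoothness multipliers of the $\crb$-submodule $\cinfty\cdot X^\sharp\subset\mathcal{D}'_{\holforms(M)^*}$, and to show that $\hat\charsheaf\subset\ssheaf$. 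The generic decomposition of $\hat\charsheaf$ provided by \cref{lem:sclosure} will then immediately translate this inclusion into the claimed stratified smoothness of $X^\sharp$.

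By \cref{lem:smoothnessV}, $\ssheaf$ is itself a $\crb$-submodule of $\holsheaf$; consequently, by the minimality of the $\crb$-hull, it suffices to establish the inclusion $\charsheaf\subset\ssheaf$, that is, that for every real characteristic form $\theta\in\charsheaf(U)$ the scalar distribution $X^\sharp(\theta) = \theta(X)$ is a smooth function.

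This last claim is the main obstacle I anticipate, and it is where the microlocal extension hypothesis on $X$ enters. I would interpret ``$X$ extends microlocally'' as meaning precisely $WF(X^\sharp)\cap\charforms M = \emptyset$ for the $\crb$-section $X^\sharp$ of $\holforms(M)^*$, in the sense of \cref{thm:levi_microlocal_regularity}. Combined with microlocal hypo-ellipticity of the first-order overdetermined system satisfied by $X^\sharp$, whose characteristic variety is exactly $\charforms M$, this forces $X^\sharp$ to be smooth when paired against any smooth characteristic form, giving $\charsheaf\subset\ssheaf$.

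Granting this, I would then apply \cref{lem:sclosure} to extract an open dense subset $\hat M\subset M$ stratified as $\hat M = \biguplus_{j=0}^{m}\hat M_j$, with $\hat\charsheaf|_{\hat M_j}$ a $\crb$-subbundle of $\holforms(M)$ of rank $j$. On each $\hat M_j$, the inclusion $\hat\charsheaf\subset\ssheaf$ says that $X^\sharp$ is smooth against every section of $\hat\charsheaf$, which is exactly smoothness of $X^\sharp$ modulo the rank $(m-j)$ $\crb$-subbundle $\ann\hat\charsheaf\subset\holforms(M)^*$. On $\hat M_m$, $\hat\charsheaf = \holsheaf$ and $\ann\hat\charsheaf = 0$, so $X^\sharp$ is smooth, completing the argument.
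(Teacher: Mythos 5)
Your algebraic reduction matches the paper's: you form the smoothness multiplier module $\ssheaf(X^\sharp)$, invoke \cref{lem:smoothnessV} to conclude it is a $\crb$-module, reduce to showing $\charsheaf \subset \ssheaf(X^\sharp)$, and read off the stratification from \cref{lem:sclosure}. All of that is exactly what the paper does.

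The gap is in the key analytic step. You read ``$X$ extends microlocally'' as $WF(X^\sharp)\cap\charforms M=\emptyset$. This is too strong: since $\crb$-sections are automatically microlocally smooth off the characteristic set (that is the elliptic direction), one always has $WF(X^\sharp)\subset\charforms M$, so your reading would force $WF(X^\sharp)=\emptyset$ outright, making $X^\sharp$ smooth everywhere and the whole stratification vacuous. The hypothesis actually means that $WF(X^\sharp)$ avoids \emph{antipodal pairs} in $\charforms M$ (this is what \cref{thm:levi_microlocal_regularity} supplies, one direction at a time, under Levi-nonflatness: if $\sigma$ has a negative Levi eigenvalue then $\sigma\notin WF$, and for $-\sigma$ the Levi form changes sign). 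Your deduction ``combined with microlocal hypo-ellipticity \ldots this forces $X^\sharp$ to be smooth when paired against any smooth characteristic form'' does not follow from the premises; microlocal ellipticity off $\charforms M$ plus your (incorrect) reading just gives global smoothness, and under the correct reading it gives nothing without an extra ingredient. That extra ingredient — which the paper explicitly flags — is the observation $\Im X^\sharp(\theta)=0$ for $\theta\in\charsheaf$, since $\theta$ is a real form and $X$ is a real vector field. Because $\theta(X)$ is a \emph{real} distribution, its $\cinfty$ wave front set is symmetric under $\xi\mapsto -\xi$; since pairing with a smooth form gives $WF(\theta(X))\subset WF(X^\sharp)$ and the latter has no antipodal pairs by the extension hypothesis, one concludes $WF(\theta(X))=\emptyset$, i.e.\ $\theta(X)\in\cinfty$. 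This reality argument is what establishes $\charsheaf\subset\ssheaf(X^\sharp)$, and it is missing from your proposal.
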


\begin{proof}
  We already know that if $X$ is an infinitesimal automorphism, then $X^\sharp$, considered as a section of $\holsheaf$, is a
  $\crb$-section. Since $X$ extends microlocally, so does $X^\sharp$, and since $\imag X^\sharp (\theta)=0$, we
  conclude that $\charsheaf \subset \ssheaf (X^\sharp) $. Thus, $\hat \charsheaf \subset \ssheaf ( X^\sharp)$,
  and the theorem follows. 
\end{proof}

As discussed before, the conclusion of Theorem~\ref{thm:infinitesimals} is in a sense optimal without
further assumptions, but does not let us conclude that an infinitesimal automorphism
is actually smooth. 

\begin{cor}\label{cor:nondegenerate}
  If $(M,\crb)$ is an involutive structure and $p\in M$ is a
  regular nondegenerate point, then  any locally bounded  infinitesimal automorphism $X$ near $p$ is smooth
  near $p$. 
\end{cor}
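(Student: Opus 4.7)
The plan is to combine Theorem~\ref{thm:infinitesimals} with the definition of regularity, using nondegeneracy both to place $p$ in the top stratum $\hat M_m$ and to supply the microlocal extension hypothesis. First I would associate to $X$ its sharp-section $X^\sharp \in (\holsheaf)^*$ defined by $X^\sharp(\omega) = \omega(X)$. By the calculation preceding \cref{def:infaut}, the condition that $X$ is an infinitesimal automorphism is exactly that $X^\sharp$ is a $\crb$-section of the dual $\crb$-bundle $(\holsheaf)^*$ with the canonical dual derivative operator from \cref{sub:constructions_with_cr_bundles}. Local boundedness of $X$ transfers to $X^\sharp$, so $X^\sharp$ is in particular a distributional $\crb$-section to which the microlocal results apply.

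Next I would establish that $X^\sharp$ extends microlocally in an open neighborhood of $p$. Here the nondegeneracy hypothesis is used: were the Levi form to be flat enough in characteristic directions near $p$ that \cref{thm:levi_microlocal_regularity} could not remove some $\pm\sigma$ from $WF(X^\sharp)$, then the first step of the $\crb$-hull construction in \cref{lem:sclosure} applied to $\charsheaf$ would fail to produce enough new sections to reach $\holsheaf$, contradicting $p\in \hat M_m$. Given microlocal extension, I would invoke \cref{thm:infinitesimals} to obtain that $X^\sharp$ is smooth modulo sections of $\ann \hat{\charsheaf}$ on each stratum $\hat M_j$. Since nondegeneracy at $p$ is by definition $p\in \hat M_m$ and $\hat M_m$ is open, on a neighborhood $U$ of $p$ we have $\hat{\charsheaf} = \holsheaf$ and hence $\ann \hat{\charsheaf} = 0$; consequently $X^\sharp$ is smooth on $U$.

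Finally, I would apply the regularity hypothesis at $p$: by definition, smoothness of $X^\sharp$ on $U$ together with local boundedness of $X$ yields an open neighborhood $V\subset U$ of $p$ on which $X$ itself is smooth, which is the conclusion.

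The main obstacle I anticipate is the microlocal extension step, since the leap from abstract nondegeneracy to the concrete Levi-theoretic condition required by \cref{thm:levi_microlocal_regularity} is not formal. Pinning it down cleanly likely requires a careful analysis of the formula for $D_{\bar L}$ acting on characteristic forms (which encodes the Levi form), together with the stratified description of $\hat{\charsheaf}$ from \cref{lem:sclosure}, to show that openness of $\hat M_m$ forces the Levi form to be indefinite in every characteristic direction on a neighborhood of $p$.
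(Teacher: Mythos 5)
Your overall structure is the right one and, after discarding one step, matches what the paper intends: associate $X^\sharp\in(\holsheaf)^*$ to $X$, observe that $X^\sharp$ is a $\crb$-section of the dual bundle, note that nondegeneracy places $p$ in the open stratum $\hat M_m$ so that by Theorem~\ref{thm:infinitesimals} $X^\sharp$ is smooth near $p$, and then invoke regularity at $p$ together with local boundedness of $X$ to conclude $X$ itself is smooth near $p$. That is exactly the chain of deductions the corollary packages up.

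The gap you yourself flag --- and which you then try to paper over --- is the microlocal extension step, and your proposed fix does not work. You argue that if the Levi form were flat in some characteristic direction near $p$, then the $\crb$-hull construction of Lemma~\ref{lem:sclosure} ``would fail to produce enough new sections to reach $\holsheaf$'', so nondegeneracy forces Levi-nonflatness and hence microlocal extension via Theorem~\ref{thm:levi_microlocal_regularity}. This implication is false. Nondegeneracy is a finite-order condition: the $\crb$-hull $\hat\charsheaf$ is built by applying \emph{iterated} derivatives $D_L$ to characteristic forms, and these iterated derivatives can eventually span $\holforms$ even when the Levi form (which governs only the \emph{first} iterate paired against $\bar\crb$) vanishes identically at $p$. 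A standard CR example is the hypersurface $\{\Im w = |z|^4\}\subset\C^2$ at the origin: it is $2$-nondegenerate, but its Levi form vanishes at $0$, so Theorem~\ref{thm:levi_microlocal_regularity} gives nothing there. More generally, $D_L\theta$ for characteristic $\theta$ need not be controlled by the Levi form at all, since it may fail to vanish on directions outside $\crb_p\oplus\bar\crb_p$ even when the Levi form does. So nondegeneracy and Levi-nonflatness are genuinely independent conditions, and the paper treats them as such: Theorem~\ref{thm:mainautos} carries ``which extends microlocally'' as a \emph{separate} hypothesis, and Corollary~\ref{cor:mainautos} obtains microlocal extension from the \emph{additional} assumption of Levi-nonflatness, not from nondegeneracy.

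In other words, Corollary~\ref{cor:nondegenerate} is not self-contained as stated: it is positioned immediately after Theorem~\ref{thm:infinitesimals} and is really asserting that, \emph{given} the microlocal extension hypothesis of that theorem, regularity plus nondegeneracy upgrades ``$X^\sharp$ smooth on $\hat M_m$'' to ``$X$ smooth near $p$''. Your first and last paragraphs carry out exactly that upgrade correctly. The middle paragraph, attempting to manufacture the microlocal extension hypothesis out of nondegeneracy, should simply be deleted and replaced by citing the hypothesis (as Theorem~\ref{thm:mainautos} does) or adding the Levi-nonflatness assumption (as Corollary~\ref{cor:mainautos} does).
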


We are now going to discuss a nice sufficient condition for regularity,
starting with the following Lemma.

\begin{lem}\label{lem:simple}
  Assume that $f \in L^{\infty}_{\rm loc} (\Rn)$ satisfies that $x^{\alpha} f (x) = g(x)$ for some $g\in \cinfty(\Rn)$.
  Then $f\in \cinfty(\Rn)$. 
\end{lem}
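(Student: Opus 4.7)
The key observation is that on the open dense set where $x^{\alpha}\neq 0$ the function $f$ automatically equals $g/x^{\alpha}$, so it is smooth there. The only issue is the behavior across the coordinate hyperplanes $\{x_i = 0\}$ where some factor of $x^{\alpha}$ vanishes, and the plan is to use the boundedness of $f$ to show that $g$ itself vanishes to exactly the right order on these hyperplanes, after which Hadamard's lemma lets us exhibit $f$ (up to a null set) as a smooth function.

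First I would interpret $\alpha=(\alpha_1,\dots,\alpha_n)$ as a multi-index and show, for each coordinate $i$, that $g$ vanishes to order $\alpha_i$ along $\{x_i=0\}$. Fix $i$ and write $x = (x_i,x')$. From $g = x^\alpha f$ and $f\in L^\infty_{\rm loc}$ one gets the pointwise estimate
\[
|g(x_i,x')| \leq C\, |x_i|^{\alpha_i}
\]
locally uniformly in $x'$. Expanding the smooth function $g$ as a finite Taylor polynomial in $x_i$ around $x_i=0$ with remainder of order $O(|x_i|^{\alpha_i})$, the above estimate forces every coefficient $\partial_{x_i}^{j} g(0,x')$ with $j<\alpha_i$ to vanish identically in $x'$.

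Next I would factor $x^\alpha$ out of $g$ by iterated application of Hadamard's lemma: the vanishing statement just obtained allows us to write $g(x) = x_i^{\alpha_i}\,g_i(x)$ with $g_i\in\cinfty(\Rn)$, and repeating this for $i=1,\dots,n$ yields $g(x) = x^{\alpha}\,h(x)$ with $h\in\cinfty(\Rn)$. Some care is needed to verify that Hadamard's factorization preserves smoothness in the remaining variables, but this is the standard smooth division statement.

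Finally I would conclude: from $x^{\alpha}f = g = x^{\alpha}h$ we get $x^{\alpha}(f-h)=0$ as a distribution (equivalently as an $L^\infty_{\rm loc}$-function). Since the complement of the coordinate hyperplanes is open and dense of full measure, and $f-h$ is a bounded measurable function, this forces $f = h$ almost everywhere; hence $f$ coincides with the smooth function $h$. The main subtlety is verifying the order-of-vanishing step cleanly in the multi-index setting; the rest is essentially bookkeeping around Hadamard's lemma.
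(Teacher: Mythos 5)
Your proposal is correct, and the essential ingredients match the paper's: the local $L^\infty$ bound on $f$ forces $g$ to vanish on the coordinate hyperplanes to the appropriate order, Hadamard-type division extracts a smooth quotient $h$ with $g = x^\alpha h$, and a final measure-theoretic observation (that $x^\alpha(f-h)=0$ together with the fact that $\{x^\alpha = 0\}$ is null) gives $f = h$ a.e.

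The organization, however, is genuinely different. The paper argues by induction on $|\alpha|$: writing $\alpha = \tilde\alpha + e_j$, one needs only the observation that $g$ vanishes (to first order) on $\{x_j = 0\}$ and a single application of the elementary Hadamard lemma, after which $x^{\tilde\alpha} f = \tilde g$ and the inductive hypothesis finishes the job. Your version establishes the full vanishing order $\alpha_i$ along each hyperplane $\{x_i = 0\}$ up front, then performs an iterated higher-order Hadamard division. The inductive peeling has the advantage that each division step requires only the first-order Hadamard lemma and the subtlety you flag — that factorization in $x_1$ preserves vanishing along $\{x_2=0\}$ etc.\ — never arises, because the induction hypothesis takes over. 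If you want to carry your all-at-once factorization through cleanly, you should note that after writing $g = x_1^{\alpha_1} g_1$ via the integral Taylor remainder, the identity $\partial_{x_2}^k g|_{x_2=0}\equiv 0$ (for $k<\alpha_2$) implies $\partial_{x_1}^{\alpha_1}\partial_{x_2}^k g|_{x_2=0}\equiv 0$, which in turn gives $\partial_{x_2}^k g_1|_{x_2=0}\equiv 0$; this is the missing verification behind your sentence ``some care is needed.'' Once that is spelled out, your argument is complete; but the inductive formulation avoids having to say it at all.
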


\begin{proof} The (easy) proof is by induction on $|\alpha|$, with $|\alpha| = 0$ being trivial. Now assume
  that $\alpha = \tilde \alpha + e_j$ for some $|\tilde\alpha|< |\alpha|$. Then $g|_{x_j=0} = 0 $, so
  that we can write $g = x_j \tilde g$ with $\tilde g \in \cinfty (\Rn)$. Thus $x^{\tilde \alpha} f = \tilde g$.
  \end{proof} 

  Based on this Lemma, we introduce a notion of mild singularity of a quotient sheaf $\faktor{ \esheaf}{\ssheaf}$, where
  $\ssheaf\subset\esheaf\subset \Gamma (\cdot, E^*)$ are submodules of the sheaf of sections
  of the dual of a  vector bundle $E$ (we are choosing to think about sections of $E^*$ because it's going to be the setting
  we apply this in; the notion obviously also makes sense for sections of $E$). 
  
   \begin{defn}\label{def:normalcrossingsgen}
    We say that $\ssheaf \subset \esheaf$  are   {\em equal modulo normal crossings} at $p\in M$ if there
    exist coordinates $x \in \R^{n+m}$ for $M$ near p, a neigbourhood $U$ of $p$, 
    sections $\omega_1 , \dots , \omega_{N}  \in \esheaf (U)  $ spanning $\esheaf (U)$, and multiindices $\alpha_1, \dots, \alpha_N
    \in \N^{n+m}$  such that near $p$,  
    the sections $x^{\alpha_1} {\omega_1}, \dots, x^{\alpha_N} {\omega_N}$ are sections
    of $\ssheaf$. 
  \end{defn}
  The idea is  that in this case, the support of $\faktor{\esheaf}{\ssheaf}$ is
  ``not too bad'' with respect to sections of $\esheaf$  so that
  we can apply Lemma~\ref{lem:simple} to conclude regularity:

  \begin{lem}\label{lem:smoothness} Assume that $\ssheaf \subset \esheaf$ are equal modulo normal crossings near $p$. If $\eta$ is a locally bounded section
    of $E$ which  
    is smooth modulo $\ssheaf$ in the sense that $ \omega (\eta) \in \cinfty_p(M)$, for every $\omega \in \ssheaf_p$,  then $\eta$ is smooth modulo
    $\esheaf$, i.e. $\omega(\eta) \in \cinfty_p (M)$ for every $\omega \in \esheaf_p$. 
  \end{lem}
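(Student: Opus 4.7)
The plan is to reduce the statement directly to Lemma~\ref{lem:simple} applied component-by-component in a suitable frame, taking advantage of the normal crossing structure.

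First I would unpack the definition: choose coordinates $x$ for $M$ near $p$ and a frame $\omega_1, \dots, \omega_N \in \esheaf(U)$ provided by Definition~\ref{def:normalcrossingsgen}, together with multiindices $\alpha_1, \dots, \alpha_N$ such that $x^{\alpha_j} \omega_j \in \ssheaf(U)$ for each $j$. Since $\eta$ is a locally bounded section of $E$ and each $\omega_j$ is a smooth section of $E^*$, the scalar function $f_j := \omega_j(\eta) \in L^\infty_{\mathrm{loc}}(U)$ (shrinking $U$ if necessary). By the standing hypothesis applied to the element $x^{\alpha_j} \omega_j$ of $\ssheaf_p$, the function
\[
g_j := (x^{\alpha_j} \omega_j)(\eta) = x^{\alpha_j} f_j
\]
is smooth near $p$ for each $j = 1, \dots, N$.

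At this point each $f_j$ satisfies the hypotheses of Lemma~\ref{lem:simple}: it is locally bounded, and after multiplication by a monomial $x^{\alpha_j}$ becomes smooth. Lemma~\ref{lem:simple} therefore yields $f_j = \omega_j(\eta) \in \cinfty_p(M)$ for each $j$. Since $\omega_1, \dots, \omega_N$ span $\esheaf(U)$ over $\cinfty(U)$, any $\omega \in \esheaf_p$ can be written (after further shrinking $U$) as $\omega = \sum_j a_j \omega_j$ with $a_j \in \cinfty_p(M)$; hence $\omega(\eta) = \sum_j a_j f_j \in \cinfty_p(M)$, which is the desired conclusion.

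The only mildly nontrivial point is verifying that $f_j = \omega_j(\eta)$ is locally bounded, which requires that the pairing of a smooth section of $E^*$ with a locally bounded section of $E$ produces a locally bounded function — this is immediate in any local trivialization. Beyond that, the argument is purely formal: the normal crossing hypothesis is precisely what lets Lemma~\ref{lem:simple} bridge the gap between smoothness modulo $\ssheaf$ and smoothness modulo $\esheaf$, so there is no genuine obstacle once the frame from Definition~\ref{def:normalcrossingsgen} is in hand.
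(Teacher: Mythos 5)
Your argument is correct and follows exactly the route the paper takes: invoke the frame $\omega_1,\dots,\omega_N$ and monomials $x^{\alpha_j}$ from Definition~\ref{def:normalcrossingsgen}, observe that $x^{\alpha_j}\omega_j(\eta)$ is smooth by the hypothesis that $\eta$ is smooth modulo $\ssheaf$, and then apply Lemma~\ref{lem:simple} to each scalar component $\omega_j(\eta) \in L^\infty_{\mathrm{loc}}$. The two small touches you add --- making the local boundedness of $\omega_j(\eta)$ explicit, and spelling out that a general $\omega \in \esheaf_p$ is a $\cinfty$-combination of the $\omega_j$ --- are left implicit in the paper's proof but are exactly the right clarifications, so there is no substantive difference.
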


     \begin{proof}
       Let $\eta$ be a locally bounded section of $E$, and write $\omega_j (\eta) = \eta_j$, wehere the $\omega_j$ are generators of $\esheaf$
       as in Definition~\ref{def:normalcrossingsgen}.
       We have
    to show that the $\eta_j$ are smooth. By assumption, we know that
    $\eta$ is smooth modulo $\ssheaf$, and thus in local coordinates, $x^{\alpha_j} \omega (\eta) = x^{\alpha_j} \eta_j$ is smooth, and so
    the result follows from Lemma~\ref{lem:simple}.
  \end{proof}

  We will now define the type of degeneracies and characteristics that we can handle. 

  \begin{defn}\label{def:normalcrossings}
    We say that $M$ has {\em normal crossing exceptional locus} at $p\in M$ if
    $\holsheaf+ \overline\holsheaf \subset \C T^* M$ are equal modulo normal crossings at $p$; we say that
    $M$ has {\em normal crossing degeneracy locus} if $\hat \charsheaf \subset \holsheaf$ are equal modulo normal
    crossings. To be exact, $M$ has normal crossing exceptional locus at $p$ if there
    exist coordinates $x \in \R^{n+m}$ for $M$ near p, a neigbourhood $U$ of $p$, 
    forms $\omega_1 , \dots , \omega_{N}  \in \Gamma(U,\C T^*M)  $ spanning $\C T^*_p M$, and multiindices $\alpha_1, \dots, \alpha_N
    \in \N^N$  such that near $p$,  
    by $x^{\alpha_1} {\omega_1}, \dots, x^{\alpha_N} {\omega_N}$ are sections
    of $\holsheaf+ \overline\holsheaf$, and $M$ has normal crossing degeneracy locus at $p$ if  there
    exist coordinates $x \in \R^{n+m}$ for $M$ near p, a neigbourhood $U$ of $p$, 
    forms $\omega_1 , \dots , \omega_{N}  \in \holsheaf (U)  $ spanning $\holsheaf_p$, and multiindices $\alpha_1, \dots, \alpha_N
    \in \N^N$  such that near $p$,  
    by $x^{\alpha_1} {\omega_1}, \dots, x^{\alpha_N} {\omega_N}$ are sections
    of $\hat \charsheaf$.
  \end{defn}

  The following Lemma is immediate from Lemma~\ref{lem:smoothness}.

  \begin{lem}
    If $M$ has normal crossing exceptional locus at $p$, then it is regular at $p$. 
  \end{lem}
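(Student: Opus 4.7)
The plan is to deduce this directly from Lemma~\ref{lem:smoothness} by choosing the correct sheaves. Set $E = \C TM$ and regard $X$ as a locally bounded section of $E$ on a neighborhood $U$ of $p$; its dual sheaf $\Gamma(\cdot,E^*) = \Gamma(\cdot, \C T^*M)$. Let $\esheaf = \Gamma(\cdot,\C T^*M)$ and let $\ssheaf = \holsheaf + \overline{\holsheaf}$, considered as a submodule of $\esheaf$. By the assumption that $M$ has normal crossing exceptional locus at $p$, $\ssheaf \subset \esheaf$ are equal modulo normal crossings at $p$ in the sense of Definition~\ref{def:normalcrossingsgen}.

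The next step is to verify that $X$ is smooth modulo $\ssheaf$. By hypothesis $X^\sharp$ is a smooth section of $\holsheaf^*$, so $\omega(X) \in \cinfty_p(M)$ for every $\omega \in \holsheaf_p$. Since $X$ is a \emph{real} vector field, for any $\omega \in \holsheaf_p$ we have $\overline{\omega}(X) = \overline{\omega(X)} \in \cinfty_p(M)$ as well, and therefore $\omega(X)\in\cinfty_p(M)$ for every $\omega \in \ssheaf_p = (\holsheaf + \overline{\holsheaf})_p$ by $\cinfty$-linearity.

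Lemma~\ref{lem:smoothness} then applies and yields that $\omega(X) \in \cinfty_p(M)$ for every $\omega \in \esheaf_p = \Gamma_p(\cdot,\C T^*M)$. Evaluating in particular on a local coframe $dx_1,\dots,dx_{n+m}$ associated to the coordinate system provided by Definition~\ref{def:normalcrossings}, we conclude that each component $dx_j(X)$ of $X$ is smooth in a neighborhood $V$ of $p$, i.e.\ $X|_V \in \vectorfields(V)$, which is precisely the regularity of $M$ at $p$.

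There is no genuine obstacle beyond correctly matching the setup of Lemma~\ref{lem:smoothness}: the only point requiring care is the extension from $\holsheaf$ to $\holsheaf + \overline{\holsheaf}$, which uses the reality of $X$, and the interpretation of the normal crossing exceptional locus condition as precisely the hypothesis of Lemma~\ref{lem:smoothness} with the chosen $\ssheaf$ and $\esheaf$.
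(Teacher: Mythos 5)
Your proof is correct and takes essentially the same approach as the paper, which asserts that the lemma is immediate from Lemma~\ref{lem:smoothness}; you have correctly supplied the intended choices $E = \C TM$, $\esheaf = \Gamma(\cdot,\C T^*M)$, $\ssheaf = \holsheaf + \overline{\holsheaf}$, and used the reality of $X$ to pass from smoothness of $X^\sharp$ on $\holsheaf$ to smoothness modulo $\ssheaf$.
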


  The proofs of Theorem~\ref{thm:mainautos} and Corollary~\ref{cor:mainautos}  are now immediate, and
  we can sharpen Theorem~\ref{thm:infinitesimals} a bit in the generically nondegenerate case.

  \begin{thm}\label{thm:infinitesimals}
  Assume that $X$ is an infinitesimal automorphism of the involutive structure $(M, \crb)$ and that
  $X$ extends microlocally at $p$. If $M$ has normal crossing degeneracy locus at $p$, then $X^\sharp$ is smooth near $p$. If
  $M$ in addition has normal crossing exceptional locus at $p$ (or is regular at $p$), then $X$ is smooth near $p$. 
\end{thm}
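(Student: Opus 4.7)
The plan is to chain the earlier version of Theorem~\ref{thm:infinitesimals} with Lemma~\ref{lem:smoothness} applied to the two normal-crossing hypotheses. By the earlier Theorem~\ref{thm:infinitesimals}, the microlocal extendability of $X$ at $p$ yields $\hat{\charsheaf}\subset \ssheaf(X^\sharp)$ in a neighborhood of $p$; equivalently, $\omega(X^\sharp)\in \cinfty_p(M)$ for every $\omega\in \hat{\charsheaf}_p$. This is the only input we need from microlocal extendability, and from here on everything reduces to feeding the appropriate modules into Lemma~\ref{lem:smoothness}.

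First I would prove smoothness of $X^\sharp$. The assumption that $M$ has normal crossing degeneracy locus at $p$ is, by Definition~\ref{def:normalcrossings}, exactly that $\hat{\charsheaf}\subset \holsheaf$ are equal modulo normal crossings at $p$ in the sense of Definition~\ref{def:normalcrossingsgen}. I would therefore apply Lemma~\ref{lem:smoothness} with the bundle $E=\holforms(M)^*$, so that $\Gamma(\cdot,E^*)=\holsheaf$, and with $\ssheaf=\hat{\charsheaf}$, $\esheaf=\holsheaf$, $\eta=X^\sharp$. Since $X$ is locally bounded, so is $X^\sharp$ as a section of $E$, and the previous paragraph supplies the smoothness of $\omega(X^\sharp)$ for $\omega\in\ssheaf$. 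Lemma~\ref{lem:smoothness} then gives $\omega(X^\sharp)\in\cinfty_p(M)$ for every $\omega\in\holsheaf_p$, which is the assertion that $X^\sharp$ is smooth near $p$ as a section of $\holsheaf^*$.

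For the second claim, either hypothesis supplies regularity at $p$: if $M$ has normal crossing exceptional locus at $p$, the Lemma immediately preceding this theorem already asserts regularity, and otherwise regularity is assumed outright. Since $X$ is locally bounded and $X^\sharp$ has just been shown to be smooth in a neighborhood of $p$, the very definition of regularity produces a neighborhood $V$ of $p$ on which $X\in \vectorfields(V)$, i.e., $X$ is smooth near $p$. Note that using the normal crossing exceptional locus hypothesis directly (without going through regularity) would amount to running Lemma~\ref{lem:smoothness} a second time with $\ssheaf=\holsheaf+\overline{\holsheaf}$ and $\esheaf$ all of $\C T^*M$, after extending $X^\sharp$ to $\holsheaf+\overline{\holsheaf}$ by $X^\sharp(\bar\omega)=\overline{X^\sharp(\omega)}$ (which is legitimate because $X$ is real).

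The main point to check carefully is that everything lines up with the setup of Section~\ref{sec:cr_modules}, in particular that the decomposition $\hat M=\biguplus \hat M_j$ appearing in the first version of Theorem~\ref{thm:infinitesimals} plays no role here, because the normal crossing degeneracy hypothesis is a pointwise condition at $p$ that delivers the needed containment $\hat{\charsheaf}\subset\holsheaf$ modulo normal crossings throughout a neighborhood; beyond that, the argument is a straightforward bookkeeping exercise that reduces everything to the elementary Lemma~\ref{lem:simple}.
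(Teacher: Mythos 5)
Your proof is correct and follows essentially the same route as the paper's one-line argument. You correctly identify that the key input from the earlier Theorem~\ref{thm:infinitesimals} is the containment $\hat{\charsheaf}\subset\ssheaf(X^\sharp)$ (established in its proof rather than its statement), and you then feed the two normal-crossing hypotheses into Lemma~\ref{lem:smoothness} exactly as intended, with the final step handled via regularity (using the Lemma that normal crossing exceptional locus implies regularity). One minor point worth flagging: the theorem statement does not explicitly hypothesize that $X$ is locally bounded, yet both Lemma~\ref{lem:smoothness} and the definition of regularity require it; your tacit use of local boundedness is consistent with the paper's own implicit usage and with the surrounding corollaries, but it would be cleaner to note that boundedness is being assumed.
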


Theorem~\ref{thm:infinitesimals} follows since we know that $\theta (X)$ is smooth for every  $\theta \in \hat\charsheaf_p$; if
$M$ has normal crossing degeneracy locus, then $\omega(X)$ is smooth for every $\omega \in \holsheaf_p$. 
  
  We now discuss how one can check the conditions (regularity and nondegeneracy) in a locally
  integrable structure. Recall from section~\ref{sec:involutive} that if we assume that $M$ is locally
  integrable, $\dim \charforms_p M = d$, then there exists integers  $\nu, \mu$ with $2 \nu + d +\mu = m+n$,
  coordinates 
  $ (x,y,s,t) \in \R^\nu_x \times \R^\nu_y \times \R_s^d \times \R_t^\mu$, and a family of basic solutions of the
  form
  \begin{align*}
  &Z_k(x,y,s,t) = x_k + iy_k, \quad k = 1, \dots, \nu,\\
  &W_k(x,y,s,t) = s_k + i\phi_k (x,y,s,t), \quad k =1, \dots, d,
\end{align*}
for real-valued smooth functions $\phi_k$ satisfying $\phi_k(0) = 0$ and $d\phi_k (0)= 0$.

Thus, $\holforms M$ is spanned by $\{ dZ_1, \dots, dZ_\nu,  dW_1, \dots dW_d \}$, and
$\crb$ is spanned by the vector fields
\[
  L_j = \frac{\partial}{\partial \bar{z}_j} - i \sum_{k=1}^d \frac{\partial \phi_k}{\partial \bar{z}_j} N_k, \quad  j = 1, \dots, \nu; \qquad 
  L_{\nu+ j} = \frac{\partial}{\partial t_j} - i\sum_{k=1}^d \frac{\partial \phi_k}{\partial t_j} N_k, \quad j=1, \dots, \mu,
\]
where $
N_k = \sum_{\ell = 1}^d \big({}^tW_s^{-1}\big)_{k\ell} \frac{\partial}{\partial s_\ell}$ for
$k=1,\dots,d$.

Let us first check under which conditions we have a normal crossing locus. Recall that
\[ dZ_j = d x_j + i dy_j,\quad j = 1,\dots, \nu,\]
\[  dW_k = ds_k + i\sum_\ell \phi_{k,s_\ell } ds_\ell  + i \sum_r \phi_{k,t_r } dt_r + i\sum_j \phi_{k,x_j } dx_j  + i \sum_j \phi_{k,y_j } dy_j. \]
Then $\holsheaf+ \overline{\holsheaf}$ is generated by
\[ d x_j, dy_j, \quad j= 1, \dots, \nu, \qquad ds_k, \quad k =1,\dots, d, \qquad \sum_r \phi_{k,t_r} dt_r , \quad k=1,\dots, d. \]
If $\holsheaf+\overline{\holsheaf}$ is to agree with $\Gamma(\cdot, \C T^*M)$ generically, the
rank of the matrix $\phi_t$ therefore generically needs to be $\mu$. If for any choice $\kappa = (k_1, \dots, k_\mu)$ with  $1\leq k_1 < \dots < k_\mu \leq d $ we  denote
\[ \Delta_\kappa :=
  \begin{vmatrix}
    \phi_{k_1, t_1} &\dots & \phi_{k_1, t_\mu} \\
    \vdots & & \vdots \\
    \phi_{k_\mu, t_1} &\dots & \phi_{k_\mu, t_\mu}
  \end{vmatrix} \]
then we have for example that $\Delta_\kappa dt_r $, where $ r=1,\dots, \mu$, is a section of $\holsheaf+ \overline{\holsheaf}$.
We thus have the following simple checkable criterion:

\begin{lem}
  \label{lem:charnormal} If $M$ is a locally integrable structure, and for some $\kappa$ as
  above we can write $\Delta_\kappa = x^\alpha y^\beta s^\gamma t^\delta \tilde \Delta_\kappa$ with
  $\tilde \Delta_\kappa (0) \neq 0$, then $M$ has normal crossing exceptional locus (and in particular is regular) at $p$. 
\end{lem}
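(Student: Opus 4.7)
My plan is to produce an explicit family of generators verifying Definition~\ref{def:normalcrossings} for $\holsheaf + \overline{\holsheaf} \subset \C T^*M$ near $p$. The natural candidate set is the basis of coordinate differentials
\[ dx_1, \dots, dx_\nu, \; dy_1, \dots, dy_\nu, \; ds_1, \dots, ds_d, \; dt_1, \dots, dt_\mu, \]
which visibly spans $\C T^*_p M$. For the first three families the trivial multiindex suffices: $dx_j = \real dZ_j$, $dy_j = \imag dZ_j$, and $ds_k = \real dW_k$ are all already sections of $\holsheaf + \overline{\holsheaf}$. So the entire content of the lemma is to produce a nontrivial multiindex that works uniformly for the $dt_r$.

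For that, the plan is to apply Cramer's rule to the $\mu \times \mu$ submatrix $\Phi_\kappa = (\phi_{k_i, t_r})_{i,r=1}^\mu$. First I would observe that
\[ \imag dW_{k_i} = \sum_j \phi_{k_i,x_j}\, dx_j + \sum_j \phi_{k_i,y_j}\, dy_j + \sum_\ell \phi_{k_i,s_\ell}\, ds_\ell + \sum_r \phi_{k_i,t_r}\, dt_r \]
is a section of $\holsheaf + \overline{\holsheaf}$; subtracting the first three sums (each a smooth combination of generators already known to lie in $\holsheaf + \overline{\holsheaf}$) gives that
\[ \omega_i := \sum_{r=1}^\mu \phi_{k_i, t_r}\, dt_r \in \holsheaf + \overline{\holsheaf}, \qquad i = 1, \dots, \mu. \]
Cramer's rule applied to the system $\omega = \Phi_\kappa \cdot (dt_1, \dots, dt_\mu)^T$ then expresses $\Delta_\kappa\, dt_r$ as a $\cinfty$-linear combination of the $\omega_i$, so $\Delta_\kappa\, dt_r \in \holsheaf + \overline{\holsheaf}$ for every $r = 1, \dots, \mu$.

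To conclude, I invoke the factorization hypothesis $\Delta_\kappa = x^\alpha y^\beta s^\gamma t^\delta \tilde\Delta_\kappa$ with $\tilde\Delta_\kappa(0) \neq 0$. Since $\tilde\Delta_\kappa$ is nonvanishing in some neighborhood of $p$, its reciprocal is smooth there, and multiplication by smooth functions preserves sections of $\holsheaf + \overline{\holsheaf}$. Hence
\[ x^\alpha y^\beta s^\gamma t^\delta\, dt_r = \tilde\Delta_\kappa^{-1} \cdot \Delta_\kappa\, dt_r \in \holsheaf + \overline{\holsheaf}, \]
and using the multiindex $(\alpha, \beta, \gamma, \delta)$ for each $dt_r$ and the zero multiindex for the other generators exhibits the normal crossing structure in the sense of Definition~\ref{def:normalcrossings}. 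The regularity conclusion then follows immediately from the lemma preceding the statement. There is no genuine obstacle; the only mildly delicate step is ensuring that the subtraction producing $\omega_i \in \holsheaf + \overline{\holsheaf}$ is legitimate, which is handled by noting that $dx_j, dy_j, ds_\ell$ are themselves in $\holsheaf + \overline{\holsheaf}$ with trivial multiindex.
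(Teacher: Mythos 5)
Your proof is correct and follows exactly the route the paper has in mind: the paragraph preceding the lemma already establishes that $\holsheaf + \overline{\holsheaf}$ is generated by $dx_j$, $dy_j$, $ds_k$, and $\sum_r \phi_{k,t_r}\,dt_r$, and that Cramer's rule gives $\Delta_\kappa\,dt_r \in \holsheaf + \overline{\holsheaf}$; the lemma is then stated without a separate proof because dividing by the unit $\tilde\Delta_\kappa$ and reading off the multiindices is precisely the step you carry out. Your write-up simply makes this explicit, including the verification that $\imag dW_{k_i}$ minus its $(dx,dy,ds)$-part lands in $\holsheaf + \overline{\holsheaf}$.
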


We need a basis for $\charsheaf $ as well. For that,
assume that $\sum \alpha^j dZ_j + \sum \beta^j dW_j$ is real. This yields the following system of equations:
\[
  \begin{aligned}
    \alpha^\ell - \bar \alpha^\ell & =  -i \sum_j (\beta^j + \bar \beta^j)  \phi_{j,x_\ell},
    &
    \ell = 1, \dots, \nu \\
    \alpha^\ell + \bar \alpha^\ell & = - \sum_j (\beta^j + \bar \beta^j) \phi_{j,y_\ell} &
    \ell = 1, \dots, \nu  \\
    \beta^k - \bar \beta^k &= i\sum_j (\beta^j + \bar \beta^j) \phi_{j, s_k}  &
    k = 1, \dots, d \\
    0 & = \sum_j  (\beta^j + \bar \beta^j) \phi_{j, t_r} &
    r = 1, \dots, \mu     
  \end{aligned}
\]
The last line determines the real parts of the $\beta$, from which the $\alpha$ and the imaginary parts of the $\beta$ are determined by the first three lines. If we use matrix notation with $\alpha = (\alpha^1 , \dots, \alpha^\nu)$ and $\beta =(\beta^1 ,\dots, \beta^d)$, then
we can compactly write:
\[ \alpha = -i(\real \beta) \phi_z , \quad \imag \beta = (\real \beta) \phi_s , \quad 0 = (\real \beta) \phi_t.\]
Hence necessarily $\dim \charforms_p = \max(d - \rank \phi_t(p), 0)$, and we will
only have nontrivial characteristic forms if $\rank \phi_t(p):= r_p <d$.

If $(M,\crb)$ happens to be real-analytic, then we can find a basis $\{b_1, \dots b_s \}$ of
real-valued real-analytic vectors $b_j = (b_j^1 , \dots ,b_j^d)$, $j=1,\dots, s$ satisfying $b_ j\phi_t = 0$ by Cartan's theorem B. 

If $M$ is merely smooth, this is a bit more complicated, but if we let
$R := \limsup_{p\to 0} r_p$, then we can find a nice
basis of forms for $\charsheaf$ on the set where $\phi_t$ is of maximal rank $R$
using Cramer's rule as follows. For any choice $\delta = (d_1,\dots,d_R)$, with  $1\leq d_1 < \dots < d_R\leq d $  of $R$ integers and  $\gamma = (g_1, \dots ,g_R)$ with $1\leq g_1 < \dots < g_R \leq \mu$ we write 
\[
\Delta ( \delta, \gamma) = \det( \phi^\delta_{\gamma} ) = 
  \begin{vmatrix}
    \phi_{d_1,t_{g_1}} & \dots & \phi_{d_1,t_{g_R}} \\
    \vdots & \ddots & \vdots \\
\phi_{d_R,t_{g_1}} & \dots & \phi_{d_R,t_{g_R}} \\
  \end{vmatrix}
  , 
\]
and $(\phi^\delta_\gamma)^c$ for the classical adjoint of $\phi^\delta_\gamma $. We
then set if e.g. $\delta = \gamma = (1,\dots, R)$
\[b_{\delta,\gamma,\ell} = (\phi^{\ell}_\delta (\phi^\delta_\gamma)^c, 0, \dots, \overbrace{\Delta(\gamma,\delta)}^{{\ell}\text{th spot}}, \dots, 0), \quad R < \ell \leq \mu\]
with the obvious modifications for other choices of $\delta$. These vectors will necessarily span at each point $p$
of maximal rank of $\phi_t$ the kernel of $\phi_t$ but will typically not be reduced
(i.e. their components will typically contain a lot of common factors). 

In any case, each non-zero vector $b$ of real functions satisfying $b\phi_t= 0$ gives rise to a characteristic form
\[ \theta_b := -\sum_{j=1 }^\nu 2i b \phi_{z_j} dZ^j + \sum_{k=1}^s (b^k + i b \phi_{s_k} ) dW^k =:- 2i b \phi_z dZ + b(I+ i \phi_s) dW = b \left(-2i \phi_z dZ + (I+i\phi_s) dW\right).  \]

We thus have the following Lemma:

\begin{lem}
  Assume that $M$ is locally integrable, given by basic first integrals $Z_k$ and $W_k$ as above. Define the
  spaces $E_k\subset \C^{m} $ by
  \[ E_k := \mathrm{span} \left\{ L^\alpha (-2ib \phi_z, b(I + i \phi_s))|_0 \colon b\phi_t = 0, |\alpha| \leq k    \right\}. \]
  Then $M$ is nondegenerate at $0$ if and only if for some $k$ we have $E_k = \C^m$. 
  In particular, if $M$ is nondegenerate then necessarily
  \[F_k :=\mathrm{span} \left\{ L^\alpha b|_0 \colon b\phi_t = 0, |\alpha| \leq k    \right\}  \]
  satisfies $F_k = \C^d$ for some $k$. 
\end{lem}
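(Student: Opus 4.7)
The plan is to translate the $\crb$-hull construction of $\charsheaf$ into explicit coefficient computations in the basic frame $(dZ_1,\ldots, dZ_\nu, dW_1,\ldots, dW_d)$ of $\holforms M$. From the derivation preceding the Lemma, any characteristic form at a point $p$ near $0$ has the form $\theta_b = b\bigl(-2i\phi_z\, dZ + (I+i\phi_s)\,dW\bigr)$, where $b\in\R^d$ satisfies $b\phi_t(p) = 0$. Identifying $\holforms M|_p \cong \C^m$ via coefficient vectors in this frame, the $\C$-linear span of $\charsheaf_p$ inside $\holforms M|_p$ corresponds to the span of the vectors $\bigl(-2ib\phi_z,\,b(I+i\phi_s)\bigr)|_p$.

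The key observation is that since each $Z_k$ and $W_k$ is a $\crb$-solution, $D_L dZ_k = d(LZ_k) = 0$ and likewise $D_L dW_k = 0$ for every $L\in\Gamma(U,\crb)$; hence $D_L$ acts coordinate-wise as $L$ on coefficient vectors in the basic frame. Combined with the ascending-chain description $\hat{\ssheaf} = \bigcup_j \mathcal{S}_j$ from Lemma~\ref{lem:sclosure}, and the fact that at the single point $0$ all $\cinfty$-linear combinations collapse to $\C$-linear ones, an induction on $j$ identifies the stalk $(\mathcal{S}_j)_0$ with the subspace $E_j\subset\C^m$. Since $\C^m$ is finite-dimensional, the chain $(E_j)$ stabilizes; by Definition~\ref{def:degeneracy}, nondegeneracy at $0$ is exactly $\hat{\charsheaf}_0 = \holforms M|_0 \cong \C^m$, which becomes $E_k = \C^m$ for some $k$, proving the equivalence.

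For the \emph{in particular} claim, I exploit $d\phi(0) = 0$, which gives $\phi_z(0) = 0$ and $\phi_s(0) = 0$. The projection $\pi_2 : \C^m \to \C^d$ onto the last $d$ components sends $E_k$ to $\mathrm{span}\{L^\alpha(b(I+i\phi_s))|_0\}$. A Leibniz expansion at $0$, using $\phi_s(0) = 0$ to kill the zeroth-order contribution, yields
\[
L^\alpha\bigl(b(I+i\phi_s)\bigr)\big|_0 = L^\alpha b\big|_0 + i\sum_{\substack{\beta+\gamma=\alpha \\ |\gamma|\ge 1}}\binom{\alpha}{\beta}(L^\beta b)\big|_0 \cdot (L^\gamma \phi_s)\big|_0,
\]
so the correction terms involve $L^\beta b|_0$ with $|\beta|<|\alpha|$ twisted by the constant matrices $(L^\gamma \phi_s)|_0$. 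An inductive argument --- absorbing these into lower-order contributions of the $F$-chain --- gives $\pi_2(E_k)\subseteq F_k$, and hence $E_k=\C^m$ forces $F_k=\C^d$.

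The main obstacle I anticipate is verifying the containment $\pi_2(E_k)\subseteq F_k$: a priori, multiplication by the constant matrix $(L^\gamma \phi_s)|_0$ may take an element of $F_{|\beta|}$ outside the entire $F$-chain. The crux is either to realize these corrections as $L^{\alpha'}\! b'|_0$ for suitable auxiliary $b'$ constructed from the given $b$ and $\phi_s$, or to argue at the level of stabilized unions $\pi_2(E_\infty) \subseteq F_\infty$ using the finite-dimensionality of $\C^d$; this bookkeeping is the only nontrivial step and requires careful attention in a complete proof.
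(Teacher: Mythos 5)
Your argument for the main equivalence is correct: since $D_L\,dZ_j = D_L\,dW_k = 0$, the derivative operator on $\holforms$ acts on coefficient vectors in the basic frame $(dZ,dW)$ simply as $L$, so the value of $\mathcal{S}_j$ at $0$ equals $E_j$ and the stabilized union $\bigcup_j E_j$ is the value of $\hat{\charsheaf}$ at $0$. Once finitely many sections of $\hat{\charsheaf}$ span at $0$, by continuity they form a frame on a neighborhood, which gives $0\in\hat M_m$ and hence nondegeneracy; the routine points you gloss over (reordering compositions of the $L_j$ into ordered multi-indices $L^\alpha$ via the commutation relations, complexifying $\charsheaf$) are fine.

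The obstacle you flag for the \emph{in particular} clause, however, is fatal: the inclusion $\pi_2(E_k)\subseteq F_k$ on which your argument rests is \emph{false}, and the asserted implication (nondegeneracy $\Rightarrow F_k=\C^d$ for some $k$) fails without further hypotheses. Take $\nu=0$, $d=2$, $\mu=1$, with first integrals $W_1 = s_1 + i(t^2/2 + s_2^3)$ and $W_2 = s_2 + it^2/2$ on $\R^3$. Then $\phi_t = (t,t)^T$, so $b\phi_t\equiv 0$ forces $b = c\cdot(-1,1)$; consequently $F_k = \C\cdot(-1,1)\subsetneq\C^2$ for every $k$. On the other hand, $\phi_s$ has $\phi_{1,s_2}=3s_2^2$ as its only nonzero entry, so for $b=(-1,1)$ one gets $b(I+i\phi_s) = (-1,\,1-3is_2^2)$, and with $L = \partial_t - (it+3ts_2^2)\partial_{s_1} - it\partial_{s_2}$ a direct computation gives $L^4\bigl(b(I+i\phi_s)\bigr)\big|_0 = (0,18i)$. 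Together with the $\alpha=0$ value $(-1,1)$ this spans $\C^2=\C^m$, so $E_4=\C^m$ and the structure is nondegenerate at $0$, yet the $F_k$ never fill $\C^d$. Thus the correction terms $(L^{\alpha-\gamma}b|_0)\cdot(L^\gamma\phi_s|_0)$ genuinely escape the $F$-chain, and neither repair you suggest (rewriting corrections as $L^{\alpha'}b'|_0$, or passing to stabilized unions) is available. The projection argument does close when $\phi_s\equiv 0$ --- which is the case in the paper's explicit examples, where $\phi$ is independent of $s$ --- so the \emph{in particular} part needs that (or a similar) extra hypothesis, or a reformulation of $F_k$.
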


We can now generalize a bit our first example. 

\begin{exa}
  Assume that we are given a locally integrable structure defined by first integrals
  \[ W_k = s_k + i t^{\alpha^k} , \quad k = 1,\dots, d  \]
  on $\R^d_s \times \R^{\mu}_t$.
  Then
  \[ \phi_t =
    \begin{pmatrix}
      \alpha^1_1 \dfrac{t^{\alpha^1}}{t_1} & \dots  & \alpha^1_\mu \dfrac{t^{\alpha^1}}{t_\mu}  \\
      \vdots & & \vdots \\
      \alpha^d_1 \dfrac{t^{\alpha^d}}{t_1}  & \dots  & \alpha^d_\mu \dfrac{t^{\alpha^d}}{t_\mu}  \\
    \end{pmatrix}  
  \]
and we know that the rank of the matrix is less than $d$ if and only if
the $\alpha^j$ are linearly dependent (for example, if $\mu < d$ this is always satisfied). The structure has normal crossings exceptional locus if
$\alpha^1 , \dots, \alpha^d$ are not contained in a hyperplane in $\mathbb{Z}^\mu$.

In order to study its nondegeneracy, for any $\lambda = (\lambda_1 , \dots, \lambda_d)$
with $\sum \lambda_k \alpha^k =0$, we can define
\[b_\lambda = \left( \lambda_1 t^{\hat \alpha^1} , \dots , \lambda_d t^{\hat \alpha^d}\right), \quad \hat \alpha^j:= \sum_{k\neq j} \alpha^k.  \]
The vector fields in this case are given by
\[ L_\ell := \dop{t_\ell} - i \sum_{j=1}^d \alpha^j_\ell \frac{t^{\alpha_j}}{t_\ell} \dop{s_j}, \quad \ell = 1,\dots, \mu.  \]
Hence,
\[ \frac{1}{\alpha!} L^\alpha b_\lambda|_{t=0} = (\delta_\alpha^{\hat{\alpha_1}} \lambda_1 , \dots, \delta_\alpha^{\hat{\alpha_d}} \lambda_d), \]
and if the $\alpha^j$ are pairwise different and $\lambda_j \neq 0$ for all $j=1,\dots, d$, the structure will be nondegenerate at $0$. 

\end{exa}

\begin{exa}
  If we consider the structure defined by
  \[ W_1 = s_1 + i t_1^2 , \quad W_2 = s_2 + i t_1 t_2 , \quad W_3 = s_3 + i t_2^2  \]
  has normal crossings and therefore is regular at $0$. It is also nondegenerate at $0$.  
\end{exa}

\begin{exa}
  Assume that $\mu=1$ and $d=2$, and that we have solutions  $Z = x+ iy$ and 
  \[ W_1 = s_1 + i \frac{t^{k+1}}{k+1} |Z|^2 , \quad W_2 = s_2 + i\frac{ t^{\ell+1}}{\ell+1} |Z|^2.  \]
  Then
  \[ \phi_t =
    \begin{pmatrix}
     t^k |Z|^2 \\
      t^\ell |Z|^2
    \end{pmatrix}, \]
  and we see that we have normal crossings.
  We choose $b=(t^\ell, -t^k)$ to check for nondegeneracy and therefore
  have to compute the derivatives of 
  \[ u =  \left(-2i\left(\frac{1}{k+1} - \frac{1}{\ell+1}\right) t^{\ell + k + 1} \bar z, t^{\ell}
      , -t^k\right) \]
  with respect to the vector fields
  \[ L_1 = \dop{\bar z_j} - i z\frac{t^{k+1}}{k+1}   \dop{s_1} -  i z \frac{ t^{\ell+1}}{\ell+1} \dop{s_2}, \qquad L_2 =
    \dop{t} -  i t^k |z|^2 \dop{s_1} -  it^\ell |z|^2 \dop{s_2}.  \] 
  Thus $L_1 L_2^{k+\ell +1} u (0) $, $L_2^\ell u (0)$, and $L_2^k u (0)$ are linearly independent, and
  we have nondegeneracy. 
\end{exa}

%%%%%%%%%%%%%%%%%%%%%%%%%%%%%%%%%%%%%%%%%%%%%%%%%%%%%%%%%%%
%%%%%%%%%%%%%%%%%%%%%%%%%%%%%%%%%%%%%%%%%%%%%%%%%%%%%%%%%%%
%
\section{Microlocal regularity for approximate solutions of a class of non-homogeneous systems}
%
%%%%%%%%%%%%%%%%%%%%%%%%%%%%%%%%%%%%%%%%%%%%%%%%%%%%%%%%%%%
%%%%%%%%%%%%%%%%%%%%%%%%%%%%%%%%%%%%%%%%%%%%%%%%%%%%%%%%%%%
%
In this section we shall prove two results regarding the microlocal regularity of approximate solutions both for a complex vector field and for a class of systems of non-homogeneous equations, namely Theorems \ref{thm:microlocal_approximate_solution} and \ref{thm:microlocal_nonhomogeneous_system}.

Let $U \subset \mathbb{R}^{N+1}$ be an open neighborhood of the origin, and consider the following complex vector field:
\begin{equation}
	\mathrm{L} = \frac{\partial}{\partial t} + i\sum_{j = 1}^N b_j(x,t) \frac{\partial}{\partial x_j} =: \frac{\partial}{\partial t} + i b(x,t) \cdot \dop{x},
\end{equation}
where the coefficients $b_j$ are real-valued, smooth function on $U$, and we
introduce vector notation to be used below. 
We shall also consider the complex vector field $L_1$ on 
$U \times \mathbb{R}$ defined by
\begin{equation}
	\mathrm{L}_1 = \frac{\partial}{\partial s} + i \mathrm{L} = \dop{s} + i \frac{\partial}{\partial t}  - \sum_{j = 1}^N b_j(x,t) \frac{\partial}{\partial x_j}
	= 2 \dop{\bar w} - \sum_{j = 1}^N b_j(x,t) \frac{\partial}{\partial x_j}.
\end{equation}
More generally, given any continuous linear operator $A : \mathcal{C}^\infty(U; \mathbb{C}^r) \longrightarrow \mathcal{C}^\infty(U;\mathcal{C}^r)$, we shall denote by $A_1$ the operator acting on $\mathcal{C}^\infty(U \times \mathbb{R}; \mathbb{C}^r)$ given by 
\begin{equation*}
	A_1 =: \frac{\partial}{\partial s} + i A
\end{equation*}
\begin{defn}
Let $A$ be a continuous linear operator $A : \mathcal{C}^\infty(U; \mathbb{C}^r) \longrightarrow \mathcal{C}^\infty(U; \mathbb{C}^r)$. We say that a $\mathcal{C}^\infty$-smooth vector-valued function $u(x,t,s)$ on $U \times \mathbb{R}$ is an $s$-approximate solution for $A_1$ if for every compact set $K \subset U$ and $k \in \mathbb{Z}_+$, there exists a positive constant $C(k) > 0$ such that
	\begin{equation*}
		\| A_1 u(x,t,s)\| \leq C(k)|s|^k ,
		\eqno{\forall (x,t) \in K, \forall |s| \ll 1}
	\end{equation*}
\end{defn}
\begin{rmk}
When $A$ extends as a continuous map $A : \mathcal{C}^k(U; \mathbb{C}^r) \longrightarrow C^0(U; \mathbb{C}^r)$, for some $k \in \mathbb{N}$, then we can consider $\mathcal{C}^k$ approximate solutions for $A_1$.
\end{rmk}

%NOTE: I'm adding in something here which is really not necessary, but useful somehow to connect with Theorem 1.5? 

In order to prepare for our discussion of Theorem~\ref{thm:microlocal_approximate_solution} let us recall that  $s$-approximate solutions are easily constructed for given smooth data 
 $u(x,t,0)$. This is due to the fact
that solving the (homogeneous) equation $A_1 u = 0$ is straightforward in the space 
$\fpstwo{\cinfty(\Omega)}{s}$, being equivalent to solving the Cauchy problem  $\partial_s \hat u = -i A \hat u$, $\hat u (x,t,0) = u_0 (x,t)$ for
the formal power series expansion in $s$ of $u(x,t,s)$, i.e. 
\[ \hat u(x,t,s) = \sum_{k=0}^\infty \frac{\partial_s^k u (x,t,0)}{k!} s^k \in \fpstwo{\cinfty(U)}{s}. \]
This yields formally  
\[  \hat u (x,t,s) = e^{-i s \ourL} \hat u (x,t,0) = \sum_{k=0}^\infty \frac{ (-i A)^{k}  u_0}{k!} s^k.  \]
Choosing a cutoff function $\chi(s)$ and an appropriate rapidly increasing sequence $R_k$
provides an $s$-approximate solution   
\begin{equation}
\label{e:formalsoln} u(x,t,s) = \sum_{k=0}^\infty  \frac{ (-i A)^{k} u_0(x,t)}{k!} \chi \left( R_ks \right) s^k;
\end{equation}
%
%we will discuss the necessary details in our proof of Theorem~\ref{thm:vectorapproximate} below.
%
In order to make the present paper self-contained, we shall present a proof of the claim above.
\begin{lem}
For every $V \Subset U$, there exists a rapidly increasing sequence $\{R_k\}_{k \in \mathbb{N}}$ such that the function $u(x,t,s)$ giving by \eqref{e:formalsoln} is $\mathcal{C}^\infty$-smooth on $V \times \mathbb{R}$ with compact support on the $s$-variable, $u(x,t,0) = u_0(x,t)$ for every $(x,t) \in V$, and $u$ is a $s$-approximate solution for $A_1$. 
\end{lem}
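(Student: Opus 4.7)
The plan is to prove this by a Borel-type resummation. Define
\[ u_k(x,t) := \frac{(-iA)^k u_0(x,t)}{k!}, \qquad v_k(x,t,s) := u_k(x,t)\,\chi(R_ks)\,s^k, \]
where $\chi \in \mathcal{C}^\infty_c(\R)$ is a cutoff with $\chi \equiv 1$ on $[-1,1]$ and $\supp\chi \subset [-2,2]$, and $R_k \nearrow \infty$ is an increasing sequence to be chosen. The candidate is $u := \sum_k v_k$. Each $u_k$ is smooth on $U$ because $A$ preserves $\mathcal{C}^\infty$, and since $\chi$ is constant near $0$, once the sum converges in $\mathcal{C}^\infty$ every $s$-derivative at $s=0$ picks up exactly one term, namely $\partial_s^N u(x,t,0)=N!\,u_N(x,t)=(-iA)^N u_0(x,t)$, matching the formal solution of $\partial_s\hat u=-iA\hat u$, $\hat u|_{s=0}=u_0$.

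For smoothness and compact $s$-support I would fix $V \Subset U$ and estimate the mixed derivatives $\partial_{(x,t)}^\beta \partial_s^j v_k$ on $\overline V \times \R$. Since $\supp v_k \subset \overline V \times [-2/R_k, 2/R_k]$, a Leibniz expansion of $\partial_s^j(\chi(R_ks)s^k)$ produces terms of the form $R_k^m \chi^{(m)}(R_k s)\,s^{k-j+m}$, each bounded on the support by a constant (depending only on $\chi$) times $R_k^{j-k}$. Consequently, for $j \le k-1$ the full mixed derivative is bounded by $C\,\|u_k\|_{\mathcal{C}^{|\beta|}(\overline V)}\,R_k^{-1}$. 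Choosing $R_k$ large and increasing diagonally so that
\[ \sum_{|\beta|+j\le k-1} \|\partial_{(x,t)}^\beta \partial_s^j v_k\|_{L^\infty(\overline V\times\R)} \le 2^{-k}, \]
guarantees that for each fixed $m$ the tail $\sum_{k>m+1} v_k$ converges in $\mathcal{C}^m(\overline V\times\R)$, while the finite head is smooth, so $u\in\mathcal{C}^\infty(\overline V\times\R)$. Monotonicity of $R_k$ forces $\supp v_k \subset \overline V\times[-2/R_0,2/R_0]$ for every $k$, yielding compact $s$-support. The initial condition $u(x,t,0)=u_0(x,t)$ is automatic since only the $k=0$ summand survives at $s=0$ and $\chi(0)=1$.

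For the approximate-solution property I would use the identity $iA\cdot(-iA)^k=-(-iA)^{k+1}$ to verify
\[ \partial_s^N (A_1 u)(x,t,0) = (-iA)^{N+1}u_0 + iA\,(-iA)^N u_0 = 0, \qquad N \ge 0, \]
so that $A_1 u \in \mathcal{C}^\infty(V\times\R)$ is flat at $s=0$. Taylor's theorem then gives, on every compact $K\subset V$ and every $N\in\mathbb{Z}_+$, the desired estimate $\|A_1 u(x,t,s)\|\le C(N,K)\,|s|^N$ for $|s|$ small, which is exactly the definition of an $s$-approximate solution. The main obstacle is the diagonal choice of $R_k$ in the smoothness step: one must simultaneously dominate every $\mathcal{C}^m$-norm while keeping $R_k$ increasing so the supports nest. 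The saving grace is the localisation $|s|\le 2/R_k$, which supplies the $s^k$-factor that offsets the $R_k^j$ produced whenever $\partial_s^j$ lands on $\chi(R_ks)$, and keeping track of this uniformly in $k$, $\beta$, and $j$ is the bookkeeping core of the argument.
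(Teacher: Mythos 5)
Your proposal is correct and takes essentially the same route as the paper: a Borel-type resummation with a cutoff $\chi(R_k s)$, a diagonal choice of the rapidly increasing sequence $R_k$ to force $\mathcal{C}^\infty$-convergence of the tail on $\overline{V}\times\mathbb{R}$, and the observation that $A_1 u$ is flat at $s=0$ because the Taylor coefficients of $u$ in $s$ reproduce the formal solution. The only cosmetic difference is in bookkeeping: the paper packages the estimate via explicit constants $C(\alpha,l,m,k)$ and then writes $A_1 u = \sum_k \frac{s^k}{k!}G_k$ with each $G_k$ vanishing identically near $s=0$, whereas you exploit the support localisation $|s|\le 2/R_k$ directly and compute $\partial_s^N(A_1 u)|_{s=0}=0$ by term-by-term differentiation; both reach the same flatness conclusion and both need the same diagonal choice of $R_k$.
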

\begin{proof}
Let $V \Subset U$. For every $(\alpha, l, m, k) \in \mathbb{Z}_+^N \times \mathbb{Z}_+ \times \mathbb{Z}_+ \times \mathbb{Z}_+$ we set
\begin{equation}
	C(\alpha, l, m, k) =: \sup_{\substack{(x,t) \in \overline{V} \\ s \in [-1,1]}} \sum_{q = 0}^m \binom{m}{q} \frac{|\chi^{(q)}(s)|}{(k-m+q)!} \|\partial_x^\alpha \partial_t^l(-iA)^k u_0(x,t) \|.
\end{equation}
Note that the quantity above is well defined since $A$ is continuous on $\mathcal{C}^\infty(U)$. Choose $R_k$ such that for every $k$, 
\begin{equation}
	\frac{\sup_{|\alpha|+l+m \leq k}C(\alpha, l, m, k)}{R_k} \leq \frac{1}{2^k}.
\end{equation}
For every $n \in \mathbb{N}$ we denote by $u_n$ the following partial sum
\begin{equation*}
	u_n(x,t,s) =: \sum_{k = 0}^n \frac{(-iA)^ku_0(x,t)}{k!}\chi(R_k s)s^k.
\end{equation*}
We claim that the sequence $u_n$ is Cauchy in $\mathcal{C}^\infty(V\times]-1,1[)$. Indeed, let $\kappa \leq n \leq  n^\prime \in \mathbb{Z}_+$, and choose $(\alpha, l, m) \in \mathbb{Z}_+^N \times \mathbb{Z}_+ \times \mathbb{Z}_+$ with $|\alpha| + l + m \leq \kappa $.
Then
\begin{align*}
	\partial_x^\alpha \partial_t^l \partial_s^m (u_{n+n^\prime}&(x,t,s) - u_n(x,t,s))  = \\
	&= \sum_{k=n}^{n + n^\prime} \frac{1}{k!} \bigg(\sum_{q=0}^m \binom{m}{q} R_k^q \chi^{(q)}(R_k s) \frac{k!}{(k-m+q)!} s^{k-m+q} \bigg) \partial_x^\alpha \partial_t^l (-iA)^k e^j \\
	& = \sum_{k=n}^{n + n^\prime} s^{k-m} \bigg(\sum_{q=0}^m \binom{m}{q} \frac{(R_k s)^q \chi^{(q)}(R_k s)}{(k-m+q)!} \bigg) \partial_x^\alpha \partial_t^l (-iA)^k e^j.
\end{align*}	 
Now we notice that $\chi(R_k s) = 0$, if $|s| \geq 1/|R_k|$, so we have that for $(x,t) \in V$ and $s \in \mathbb{R}$,
\begin{align*}
	\big\| \partial_x^\alpha \partial_t^l \partial_s^m (u_{n+n^\prime}&(x,t,s) - u_n(x,t,s)) \big\| \leq \\
	& \leq \sum_{k=n}^{n + n^\prime} |s|^{k-m} \bigg(\sum_{q=0}^m \binom{m}{q} \frac{|R_k s|^q |\chi^{(q)}(R_k s)|}{(k-m+q)!} \bigg) \|\partial_x^\alpha \partial_t^l (-iD)^k e^j \| \\
	& \leq \sum_{k=n}^{n + n^\prime} \left(\frac{1}{R_k} \right)^{k-m} C(\alpha, l, m, k) \\
	& \leq \sum_{k=n}^{n + n^\prime} \frac{\sup_{|\alpha^\prime|+l^\prime + m^\prime \leq k}C(\alpha^\prime, l^\prime, m^\prime, k)}{R_k}  \\
	& \leq \sum_{k=n}^{n + n^\prime} \frac{1}{2^k}.
\end{align*}	 
We then conclude that $u_n$ converges in the $\mathcal{C}^\infty$ topology on $V\times \mathbb{R}$ to a $\mathcal{C}^\infty$-smooth (vector-valued) function $u$. To prove that $u$ is an $s$-approximate solution for $A_1$ we start writing
\begin{align*}
	A_1 u(x,t,s) &= \sum_{k=0}^\infty \frac{ R_k \chi^\prime (R_k s)s^{k}}{k!}(-iA)^k u_0(x,t) + \sum_{k=1}^\infty \frac{\chi^\prime(R_k s)k s^{k-1}}{k!} (-iA)^k e^j \\
	& - \sum_{k=0}^\infty \frac{\chi(R_k s) s^k}{k!}(-iA)^{k+1} u_0(x,t) \\
	& = \sum_{k=0}^\infty \frac{s^k}{k!}G_k(x,t,s),
\end{align*}
where $G_k(x,t,s) = R_k \chi^\prime(R_k s)(-iA)^k u_0(x,t) + (\chi(R_{k+1} s) - \chi(R_k s))(-iA)^{k+1} u_0(x,t)$, and in particular we have that $\partial_s^l G_k(x,t,0) = 0$, for every $l \in \mathbb{Z}_+$. Thus $\partial_s^\ell \mathrm{D}_1 {\omega}^j\big|_{s=0} = 0$, for every $\ell \in \mathbb{Z}_+$, and this means that $u$ is an $s$-approximate solution for $A_1$.
 \end{proof}
Let us go back to the vector field $\mathrm{L}$. In view of the previous lemma, fix $V \Subset U$ an open neighborhood of the origin, and let $Z_1(x,t,s), \dots, Z_N(x,t,s)$ be $\mathcal{C}^\infty$-smooth $s$-approximate solutions for $\mathrm{L}_1$ on $V$, with $Z_j(x,t,0) = x_j$, for $j = 1, \dots, N$. Then we can write $Z(x,t,s) = x + s\psi(x,t,s)$, where $\psi = (\psi_1, \dots, \psi_N)$ is a $\mathcal{C}^\infty$-smooth map. Now we shall enunciate a general and useful fact relating $\psi$ with the coefficients $b_j$s of $\mathrm{L}$.
\begin{prop}
	For every $l \in \mathbb{Z}_+$ and $j = 1, \dots, N$ we have that 
	\begin{equation}\label{eq:relation_psi_b}
		\partial_s^l \psi_j|_{s=0} = \frac{(-i\mathrm{L})^l b_j}{l+1}.
	\end{equation}	
\end{prop}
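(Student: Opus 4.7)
The key relation to exploit is that $Z_j$ being an $s$-approximate solution for $\mathrm{L}_1$ means $\partial_s^k(\mathrm{L}_1 Z_j)|_{s=0} = 0$ for every $k \geq 0$. The plan is to substitute the explicit form $Z_j = x_j + s\psi_j$ into $\mathrm{L}_1 Z_j$, then take successive $s$-derivatives at $s = 0$ to extract a recursion for $\partial_s^l \psi_j|_{s=0}$.

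First I will compute $\mathrm{L}_1 Z_j$ directly. Since $\mathrm{L} x_j = i b_j$ and $\partial_s Z_j = \psi_j + s\partial_s \psi_j$, a short calculation yields
\[
\mathrm{L}_1 Z_j = (\psi_j - b_j) + s\, \mathrm{L}_1 \psi_j.
\]
Setting $s=0$ gives $\psi_j|_{s=0} = b_j$, which is the $l=0$ case. For the inductive step, I apply $\partial_s^l$ to the identity above, noting that $b_j$ does not depend on $s$ and that by Leibniz $\partial_s^l(s\,\mathrm{L}_1\psi_j)|_{s=0} = l\,\partial_s^{l-1}(\mathrm{L}_1 \psi_j)|_{s=0}$. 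Using that the left-hand side vanishes at $s=0$ (approximate solution property), I get
\[
-\partial_s^l \psi_j|_{s=0} = l\,\partial_s^{l-1}(\mathrm{L}_1 \psi_j)|_{s=0}.
\]

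Next, expanding $\mathrm{L}_1 \psi_j = \partial_s \psi_j + i\mathrm{L}\psi_j$ and applying $\partial_s^{l-1}$ at $s=0$, I obtain
\[
\partial_s^{l-1}(\mathrm{L}_1 \psi_j)|_{s=0} = \partial_s^l \psi_j|_{s=0} + i\mathrm{L}\bigl(\partial_s^{l-1}\psi_j|_{s=0}\bigr).
\]
Combining the two displays and rearranging yields the recursion
\[
\partial_s^l \psi_j|_{s=0} = \frac{l}{l+1}(-i\mathrm{L})\,\partial_s^{l-1}\psi_j|_{s=0}.
\]
An immediate induction using the base case $\psi_j|_{s=0} = b_j$ then produces the claimed formula $\partial_s^l \psi_j|_{s=0} = (-i\mathrm{L})^l b_j/(l+1)$, with the factor $1/(l+1)$ arising telescopically from $\prod_{k=1}^{l} \frac{k}{k+1} = \frac{1}{l+1}$.

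There is no real obstacle here: the argument is a direct bookkeeping exercise once one writes $\mathrm{L}_1 Z_j$ in the form $(\psi_j - b_j) + s\,\mathrm{L}_1 \psi_j$. The only minor care required is keeping track of the factor of $i$ correctly in $\mathrm{L}_1 = \partial_s + i\mathrm{L}$ and in $\mathrm{L} x_j = i b_j$, and handling the Leibniz term produced by the factor of $s$, which is what forces the $l/(l+1)$ ratio in the recursion.
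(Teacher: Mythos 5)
Your proposal is correct and follows essentially the same route as the paper: compute $\mathrm{L}_1 Z_j = (\psi_j - b_j) + s\,\mathrm{L}_1\psi_j$, use the approximate-solution property $\partial_s^l(\mathrm{L}_1 Z_j)|_{s=0}=0$ together with the Leibniz rule on the $s\,\mathrm{L}_1\psi_j$ term and the commutation $[\partial_s,\mathrm{L}]=0$, and extract the recursion $\partial_s^l\psi_j|_{s=0} = \tfrac{l}{l+1}(-i\mathrm{L})\partial_s^{l-1}\psi_j|_{s=0}$, which the paper obtains in the identical (if slightly differently ordered) way. The only cosmetic difference is that you set $s=0$ before splitting $\mathrm{L}_1$ into $\partial_s + i\mathrm{L}$, whereas the paper collects the $\partial_s^{l_0+1}\psi_j$ coefficient first; both lead to the same recursion and the same conclusion.
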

\begin{proof}
%We observe that $-i \ourL x_j = b_j$ and therefore the formula follows by the
%observation above, in particular \eqref{e:formalsoln}, applied to $u(x,t,0) = x_j$. 
%
Let $j \in \{1, \dots, N\}$. Since $Z_j$ is an $s$-approximate solution for $\mathrm{L}_1$ we have that for every $l \in \mathbb{Z}_+$,
\begin{equation*}
	\partial_s^l \mathrm{L}_1 Z_j|_{s=0} = 0.
\end{equation*}
Since 
\begin{align*}
	\mathrm{L}_1 Z_j(x,t,s) & = (\partial_s +i\mathrm{L})(x_j + s\psi_j(x,t,s))
		\\
 	& = \psi_j(x,t,s) - b_j(x,t) + s\mathrm{L}_1\psi_j(x,t,s),
\end{align*}
 we have that $\psi_j|_{s = 0} = b_j$. Now we shall proceed by induction on $l$. So suppose that \eqref{eq:relation_psi_b} is valid for $l = 0, 1, \dots, l_0$. We notice that since the coefficients of $\mathrm{L}_1$ does not depend on $s$,
 \begin{align*}
 	\partial_s^{l_0+1} \mathrm{L}_1 Z_j(x,t,s) & = \partial_s^{l_0+1} \big(\psi_j(x,t,s) - b_j(x,t) + s\mathrm{L}_1\psi_j(x,t,s)\big) \\
 	& = \partial_s^{l_0+1} \psi_j(x,t,s) + \sum_{k = 0}^{l_0 + 1} \binom{l_0 + 1}{k} (\partial_s^{l_0 +1 - k} s) \mathrm{L}_1 \partial_s^k \psi_j(x,t,s) \\
 	& =  \partial_s^{l_0+1} \psi_j(x,t,s) + (l_0 + 1)\mathrm{L}_1 \partial_s^{l_0} \psi_j(x,t,s) + s \mathrm{L}_1 \partial_s^{l_0 +1} \psi_j(x,t,s) \\
 	& = (l_0 + 2)\partial_s^{l_0+1} \psi_j(x,t,s) + (l_0 + 1)i\mathrm{L} \partial_s^{l_0} \psi_j(x,t,s) + s \mathrm{L}_1 \partial_s^{l_0 +1} \psi_j(x,t,s)
 \end{align*}
Thus evaluating the equality above on $s = 0$ and using the induction hypothesis we obtain
 \begin{align*}
 	\partial_s^{l_0+1} \psi_j|_{s = 0} & = -\frac{(l_0 + 1)}{l_0 + 2}i\mathrm{L} \partial_s^{l_0} \psi_j|_{s = 0} \\
 	& =  \frac{(-i\mathrm{L})^{l_0+1} b_j}{l_0+2}
 \end{align*}
\end{proof}
\begin{rmk}
	Suppose that $\Im i\mathrm{L}b(x,t) = 0$, for every $(x,t)$ in some open neighborhood of the origin. Then near the origin $b(x,t) = b(x)$. Since on functions independent of $t$ the vector field $\mathrm{L}$ is purely imaginary,  $i \mathrm{L}$ becomes a real vector field on such functions, and therefore $\Im (i\mathrm{L})^k b \equiv 0$ for every $k \in \mathbb{N}$.
\end{rmk} 
%
%In order to facilitate the notation of the next results, we shall denote by $\mathcal{D}^\prime(V)\otimes_\ast\mathcal{C}^\infty(V\times]-\delta,\delta[)$ the pullback of $\mathcal{D}^\prime(V)\otimes\mathcal{C}^\infty(V\times]-\delta,\delta[)$ by the map $\mathfrak{i}: V \times ]-\delta, \delta[ \longrightarrow V \times V \times ]-\delta,\delta[ $ giving by $\mathfrak{i}(x,t,s) = (x,t,x,t,s)$, \textit{i.e.} the elements in $\mathcal{D}^\prime(V)\otimes_\ast\mathcal{C}^\infty(V\times]-\delta,\delta[)$ can be written as
%
%\begin{equation*}
%	\sum_{j = 1}^n u_j(x,t) v_j(x,t,s),
%\end{equation*}
%
%for some $u_1, \dots, u_n \in \mathcal{D}^\prime(V)$ and $v_1, \dots, v_n \in \mathcal{C}^\infty(V \times ]-\delta,\delta[)$. We shall also denote by $\mathcal{C}^\infty_{\text{flat}}(V\times]-\delta,\delta[)$ as the set of all $\mathcal{C}^\infty$-smooth functions on $V \times ]-\delta, \delta[$ that are flat at $s=0$.
%
For every open interval $I\subset \mathbb{R}$ containing the origin, we shall denote by $\mathcal{C}^\infty_{\text{flat}}(I; \mathcal{D}^\prime(V))$ the space of all $u \in \mathcal{C}^\infty(I; \mathcal{D}^\prime)$ such that, there exists $\delta > 0$ for which for every compact set $K \subset V$ there exists $m \in \mathbb{N}$ satisfying the following: for every $k \in \mathbb{N}$ there is a constant $C = C_{K,k} > 0$ such that
\[
	|\langle u(s), \phi(x,t) \rangle| \leq C_{K,k} |s|^k \sum_{|\alpha| \leq m} \sup_{K} |\partial^\alpha \phi|, \qquad \forall \phi \in \mathcal{C}^\infty_c(K), \forall |s| \leq \delta. 
\]	
\begin{thm}\label{thm:microlocal_approximate_solution}
	Let $\xi^0 \in \mathbb{R}^N \setminus \{ 0 \}$, and assume that $\frac{\partial b}{\partial t}(0) \cdot \xi^0 > 0$. Let $V\subset U$ be an open neighborhood of the origin, $\delta>0$, and $u \in \mathcal{C}^\infty(]-\delta, \delta[; \mathcal{D}^\prime(V))$ be such that $\mathrm{L}_1(u) \in \mathcal{C}_\text{flat}^\infty(]-\delta, \delta[; \mathcal{D}^\prime(V))$. Then $(\xi^0, 0) \notin \mathrm{WF} (u_0)$, where $u_0 = u|_{s=0}$.
\end{thm}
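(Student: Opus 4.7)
The plan is to adapt the FBI / Baouendi--Treves scheme used by Berhanu and Xiao to prove Hans Lewy's theorem on abstract CR manifolds, modified to the approximate-solution setting for the single complex vector field $\mathrm{L}_1$. First, I would use the $s$-approximate first integrals $Z_j(x,t,s) = x_j + s\psi_j(x,t,s)$ of $\mathrm{L}_1$ constructed just before the theorem, which by \eqref{eq:relation_psi_b} satisfy $\psi_j(x,t,0) = b_j(x,t)$ and $\partial_s \psi_j(x,t,0) = -i\mathrm{L}b_j/2$. Since $b$ is real, a second-order Taylor expansion in $s$ at the origin gives
\[
  \imag Z(x,t,s)\cdot\xi^0 = -\tfrac{s^2}{2}\bigl(\partial_t b(0)\cdot\xi^0\bigr) + O\bigl(s^3 + s^2(|x|+|t|)\bigr),
\]
so the hypothesis $\partial_t b(0)\cdot\xi^0 > 0$ yields constants $c,\rho,\sigma>0$ with $-\imag Z(x,t,s)\cdot\xi^0 \geq c\,s^2$ whenever $|(x,t)|<\rho$ and $0<|s|<\sigma$.

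Fix a cutoff $\chi\in\cinfty_c(V)$ equal to $1$ near the origin. For $(y,y_t)$ near $0\in\R^N\times\R$, $(\eta,\tau')$ in a small conic neighborhood $\Gamma$ of $(\xi^0,0)$, $\lambda\geq 1$, and $|s|<\sigma$, introduce the FBI-type pairing
\[
  T_\lambda u(y,y_t,\eta,\tau',s) := \bigl\langle u(\cdot,\cdot,s),\ \chi\exp\bigl(i\lambda[\eta\cdot(y-Z)+\tau'(y_t-t)] - \lambda|(\eta,\tau')|\,[(y-Z)^2+(y_t-t)^2]\bigr)\bigr\rangle.
\]
By the standard FBI characterization of the wave-front set, it suffices to show $T_\lambda u(\cdots,0) = O(\lambda^{-\infty})$ uniformly on a neighborhood of $(0,0;\xi^0,0)$ in $V\times\Gamma$. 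For some $s_0\in(0,\sigma)$ to be chosen later as a function of $\lambda$, I would compare $T_\lambda u$ at $s = s_0$ and $s = 0$. Using $\partial_s u = \mathrm{L}_1 u - i\mathrm{L}u$, the flatness of $\mathrm{L}_1 u$ at $s=0$, and integration by parts in $(x,t)$ to move $\mathrm{L}$ onto the kernel (whose $\mathrm{L}_1$-image is flat at $s=0$ because $\mathrm{L}_1 Z_j$ is), an iterated argument produces, for every $k,M\in\N$, a constant $C_{k,M}$ with
\[
  \bigl|T_\lambda u(\cdots,s_0) - T_\lambda u(\cdots,0)\bigr| \leq C_{k,M}\,s_0^{k+1}\,\lambda^{M}.
\]

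For the fixed $s_0$, direct expansion of the kernel's modulus gives the bound
\[
  \exp\!\Bigl(\lambda\eta\cdot\imag Z + \lambda|(\eta,\tau')|\bigl[|\imag Z|^2 - |y-\real Z|^2 - (y_t-t)^2\bigr]\Bigr);
\]
the Gaussian damping in the last two quadratic terms restricts the effective support of the integral to a $\lambda^{-1/2}$-neighborhood where, by the first step, $\eta\cdot\imag Z \leq -c|\eta|s_0^2$, while $|\imag Z|^2 = O(s_0^4)$ is absorbed once $s_0$ is small enough. Since $|\eta|\sim|(\eta,\tau')|$ on $\Gamma$, this yields $|T_\lambda u(\cdots,s_0)|\leq C e^{-c\lambda s_0^2/2}$. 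Choosing $s_0 = \lambda^{-1/4}$ makes both this estimate ($O(e^{-c\lambda^{1/2}/2})$) and the comparison estimate above ($O(\lambda^{-(k+1)/4 + M})$, which is $O(\lambda^{-\infty})$ for $k$ sufficiently large) rapidly decreasing, so $T_\lambda u(\cdots,0) = O(\lambda^{-\infty})$ and hence $(\xi^0,0)\notin \mathrm{WF}(u_0)$. The main difficulty I anticipate is the inductive integration-by-parts step producing the $s_0^{k+1}\lambda^{M}$ bound: one has to extract the $s_0^{k+1}$ factors systematically from the flatness of both $\mathrm{L}_1 u$ and $\mathrm{L}_1 Z_j$ while keeping control of the polynomial $\lambda$-growth from the many derivatives that land on the FBI kernel, and in a way that remains uniform given that $u$ is only of finite distributional order on compact sets.
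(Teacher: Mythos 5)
Your proposal is correct and takes essentially the same route as the paper: both rely on the $s$-approximate first integrals $Z_j = x_j + s\psi_j$ together with the relation $\Im\partial_s\psi_j|_{s=0} = -\tfrac12\partial_t b_j$, an FBI-type phase holomorphic in $Z$, moving the vector field onto the kernel (the paper packages this as a Stokes' theorem identity in $(x',t',s)$ via the auxiliary vector field $\mathrm{L}^\sharp = \mathrm{L}_1 - \sum_j \mathrm{L}_1(Z_j)\mathrm{M}_j$ and the extra integral $Z_{N+1}=t-is$), and the flatness of $\mathrm{L}_1 u$ in $s$. The one noticeable difference is technical: where you compare $T_\lambda u$ at $s=0$ and $s=s_0$ with a $\lambda$-dependent cutoff $s_0 = \lambda^{-1/4}$ and bound the error by $C_{k,M}s_0^{k+1}\lambda^M$, the paper keeps a fixed upper limit $\gamma$ and instead extracts the decay from $\int_0^\gamma s^k|\xi|^{m_1} e^{-\rho s^2|\xi|/16}\,ds \lesssim |\xi|^{-(k+1)/2 + m_1}$, i.e., from the Gaussian damping within the $s$-integral rather than from a short integration range. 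Both yield the same $O(\lambda^{-\infty})$ conclusion; the fixed-$\gamma$ variant is slightly cleaner since the flat error and the Gaussian factor appear together in one integral, whereas your version has to balance the two bounds after the fact. Two small points you should make explicit if you write this up: (i) the $s_0^{k+1}\lambda^M$ comparison estimate requires the $\mathrm{L}^\sharp$-type construction (not merely integrating by parts with $\mathrm{L}$ — the form $\chi u\,e^Q\,dZ$ and the holomorphy of $e^Q$ in $Z$ are what eliminate the divergence and kernel-derivative bookkeeping in one stroke), and (ii) the case $\tau<0$ requires integrating in $s$ over a one-sided interval on the other side of the origin, which the paper handles by replacing $[0,\gamma]$ with $[-\gamma,0]$.
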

%
%\begin{rmk}
%Note that $u|_0$ is well defined since $u = \sum_{\ell = 1}^{k_0} u_\ell h_\ell$, for some $u_\ell \in \mathcal{D}^\prime(V)$ and $h_\ell \in \mathcal{C}^\infty(V\times]-\delta,\delta[)$, so $u_0 = \sum_{\ell = 1}^{k_0} u_\ell h_\ell|_{s=0}$.
%\end{rmk}
%
\begin{proof}
	It is enough to prove that for some $\chi \in \mathcal{C}^\infty_c(V)$, satisfying $\chi \equiv 1$ in some neighborhood of the origin, some open cone $\Gamma \subset \mathbb{R}^{N+1} \setminus 0$ with $(\xi^0, 0) \in \Gamma$, some constants $\kappa > 0$ and $C_k > 0$, for $k \in \mathbb{Z}_+$, and some neighborhood of the origin $V_0 \subset V$ , the following estimate holds true:
\begin{equation*}
	|\mathfrak{F}_\kappa[\chi u_0](x,t;\xi,\tau)| \leq \frac{C_k}{|(\xi,\tau)|^k},
	\eqno{\forall (x,t) \in V_0, (\xi,\tau) \in \Gamma, |(\xi,\tau)| \geq 1, k \in \mathbb{Z}_+ ,}
\end{equation*} 
where
\begin{align*}
	\mathfrak{F}_\kappa[\chi u_0](x,t;\xi,\tau) = \left\langle \chi(x^\prime, t^\prime)u(0,x^\prime, t^\prime), e^{i (\xi,\tau) \cdot (x - x^\prime, t - t^\prime) - \kappa |(\xi, \tau)||(x - x^\prime, t - t^\prime)|^2}  \right\rangle.
\end{align*}
So let $V_0 \Subset V_1 \Subset V_2 \Subset V$ be open neighborhoods of the origin as small as we want, and let $\chi \in \mathcal{C}_c^\infty(V)$ be such that $\chi \equiv 1$ in $V_1$. Consider $Z_j(x,t,s) = x_j + s\psi_j(x,t,s)$, $j = 1, \dots, N$ as before, and set $Z_{N+1}(x,t,s) = t - is$. We can assume that $\mathrm{d}Z_1 \wedge \cdots \wedge \mathrm{d} Z_{N+1} \neq 0$ in $V_ 2\times \mathbb{R}$.
Thus $\{\mathrm{d}Z_1, \dots, \mathrm{d}Z_{N+1}\}$ defines a locally integrable structure on $V_2 \times [-\delta, \delta]$.  
 
Let $\{\mathrm{M}_1, \dots, \mathrm{M}_{N+1}\}$ be the dual vector fields of the family $\{\mathrm{d}Z_1, \dots, \mathrm{d}Z_{N+1}\}$, so the vector field
\begin{align*}
	\mathrm{L}^\sharp = \mathrm{L}_1 - \sum_{j = 1}^{N + 1} \mathrm{L}_1(Z_j) \mathrm{M}_j,
\end{align*}
satisfies $\mathrm{L}^\sharp Z_j = 0$, for $j = 1, \dots, N + 1$, and $\mathrm{d}s (\mathrm{L}^\sharp) = 1$. Since $\mathrm{L}_1 Z_j$ vanishes to infinite order in $s$ at $s = 0$, we also have that $\mathrm{L}^\sharp (u) \in \mathcal{C}_\text{flat}^\infty(]-\delta, \delta[; \mathcal{D}^\prime(V))$. % Let $k_0 \in \mathbb{N}$, $u_\ell \in \mathcal{D}^\prime(V)$ and $h_\ell \in \mathcal{C}^\infty(V\times]-\delta,\delta[)$, $\ell = 1, \dots, k_0$, be such that $u = \sum_{\ell = 1}^{k_0} u_\ell h_\ell$. For each $\ell = 1, \dots, k_0$, take $\{u_\ell^\nu\}_{\nu \in \mathbb{N}}\subset\mathcal{C}^\infty(V)$ such that $u_\ell^\nu \longrightarrow u_\ell$ in $\mathcal{D}^\prime(V_2)$ for $\ell = 1, \dots, k_0$. 
Since $\mathcal{C}^\infty([-\delta, \delta[) \otimes \mathcal{D}^\prime(V)$ is dense in $\mathcal{C}^\infty(]-\delta, \delta[; \mathcal{D}^\prime(V))$, there exists $\{u_\nu\}_\nu \subset \mathcal{C}^\infty(V \times ]-\delta,[)$ a sequence of smooth functions such that $u_\nu \longleftarrow u$ in $\mathcal{C}^\infty(]-\delta, \delta[; \mathcal{D}^\prime(V))$, and 
\[
	\iint_V u_\nu(x,t,s) v(x,t) \mathbb{d} x \mathrm{d}t \longrightarrow \left\langle u(s), v \right\rangle, \qquad  \forall v \in \mathcal{C}_c^\infty(V), \,\forall s \in ]-\delta, \delta[.
\]
Writing $Z = (Z_1, \dots , Z_{N+1})$, we define
\begin{align*}
	Q(x,t,x^\prime, t^\prime, \xi,\tau, s) & =  i (\xi,\tau) \cdot ((x,t) - Z(x^\prime, t^\prime,s)) - \kappa|(\xi, \tau)|\langle (x,t) - Z(x^\prime, t^\prime, s)\rangle^2,
\end{align*}
where $\langle Z \rangle = \sum_j Z_j^2$ and $\kappa$ is a positive constant to be chosen later.  We also set, for every $\nu \in \mathbb{N}$,
\begin{align*}
	q_\nu(x,t,x^\prime, t^\prime, \xi,\tau, s) = \chi(x^\prime,t^\prime) u_\nu(x^\prime, t^\prime,s)e^{Q(x,t,x^\prime, t^\prime, \xi,\tau, s)}\mathrm{d}Z(x^\prime,t^\prime, s),
\end{align*}
where $\mathrm{d}Z = \mathrm{d}Z_1 \wedge \dots \wedge \mathrm{d}Z_{N+1}$. Since $e^Q$ is a holomorphic function in the $Z$, and we know that $L^\sharp (Z_j) = 0$ and $\ourd s (L^\sharp) =1$, we have that
\begin{align*}
	\mathrm{d}_{(x^\prime, t^\prime, s)} q_\nu(x,t,x^\prime, t^\prime, \xi,\tau, s) & = \mathrm{L}^\sharp(\chi u_\nu)(x^\prime, t^\prime, s)e^{Q(x,t,x^\prime, t^\prime, \xi,\tau, s)} \mathrm{d} s \wedge \mathrm{d}Z(x^\prime,t^\prime, s),
\end{align*}
and therefore by Stokes' Theorem we have that, for any $0 < \gamma \leq \delta$,
\begin{align}\label{eq:Stokes1}
	\int_{V_2 \times [0, \gamma]}\mathrm{L}^\sharp(\chi u_\nu)(x^\prime, t^\prime, s) & e^{Q(x,t,x^\prime, t^\prime, \xi,\tau, s)}  \mathrm{d} s \wedge \mathrm{d}Z(x^\prime,t^\prime, s) = \\ \nonumber
	& = \int_{\partial(V_2 \times [0, \gamma])}(\chi u_\nu)(x^\prime, t^\prime, s)e^{Q(x,t,x^\prime, t^\prime, \xi,\tau, s)}  \mathrm{d}Z(x^\prime,t^\prime, s) \\ \nonumber
	& = \int_{V_2}(\chi u_\nu)(x^\prime, t^\prime, \gamma)e^{Q(x,t,x^\prime, t^\prime, \xi,\tau, \gamma)}  \mathrm{d}Z(x^\prime,t^\prime, \gamma) \\ \nonumber
	& - \mathfrak{F}_\kappa[\chi u_\nu|_{s=0}](x^\prime, t^\prime;\xi,\tau),
\end{align}
in other words,
\begin{align}\label{eq:Stokes_nu}
	\mathfrak{F}_\kappa[\chi u_\nu|_{s=0}](x^\prime, t^\prime;\xi,\tau)  &=  \int_{V_2}(\chi u_\nu)(x^\prime, t^\prime, \gamma)e^{Q(x,t,x^\prime, t^\prime, \xi,\tau, \gamma)}  \det Z_{x,t}(x^\prime,t^\prime, \gamma) \mathrm{d} x^\prime \mathrm{d} t^\prime \\ \nonumber
	& - \int_{[0, \gamma]} \iint_{V_2 }u_\nu(x^\prime,t^\prime,s)\mathrm{L}^\sharp(\chi)(x^\prime, t^\prime, s)  e^{Q(x,t,x^\prime, t^\prime, \xi,\tau, s)}   \det Z_{x,t}(x^\prime,t^\prime, s) \mathrm{d} x^\prime \mathrm{d} t^\prime \mathrm{d} s  \\ \nonumber
	& - \int_{[0, \gamma]} \iint_{V_2 }\chi(x^\prime,t^\prime)\mathrm{L}^\sharp(u_\nu)(x^\prime, t^\prime, s)  e^{Q(x,t,x^\prime, t^\prime, \xi,\tau, s)}   \det Z_{x,t}(x^\prime,t^\prime, s) \mathrm{d} x^\prime \mathrm{d} t^\prime \mathrm{d} s .
\end{align}
Taking $\nu \rightarrow \infty$ on both sides of \eqref{eq:Stokes_nu} we have that
\begin{align}\label{eq:Stokes2}
	\mathfrak{F}_\kappa[\chi u_0](x^\prime, t^\prime;\xi,\tau) & = \left\langle (\chi u)(x^\prime, t^\prime, \gamma), e^{Q(x,t,x^\prime, t^\prime, \xi,\tau, \gamma)} \det Z_{x,t}(x^\prime,t^\prime, \gamma) \right\rangle \\ \nonumber
	& - \int_{[0, \gamma]} \bigg\langle \big(u\mathrm{L}^\sharp(\chi)\big)(x^\prime, t^\prime, s),  e^{Q(x,t,x^\prime, t^\prime, \xi,\tau, s)}  \det Z_{x,t}(x^\prime,t^\prime, s) \bigg\rangle \mathrm{d} s = \\ \nonumber
	& - \int_{[0, \gamma]} \bigg\langle \big(\chi\mathrm{L}^\sharp(u)\big)(x^\prime, t^\prime, s),  e^{Q(x,t,x^\prime, t^\prime, \xi,\tau, s)}  \det Z_{x,t}(x^\prime,t^\prime, s) \bigg\rangle \mathrm{d} s. 
\end{align}
In view of \eqref{eq:relation_psi_b}, since the $b_j$s are real we have that $\Im \psi_j(x,t,0) = 0$, and $\Im \partial_s \psi_j(x,t,0) = - \partial_t b_j(x,t)/2$, for $j = 1, \dots, N$. Thus, for $j = 1, \dots, N$,
\begin{align*}
	\Im \psi_j(x^\prime, t^\prime, s^\prime) & = \Im \psi_j(x^\prime, t^\prime, 0) + \Im \partial_t \psi_j(x^\prime, t^\prime, 0) s^\prime + O((s^\prime)^2) \\
	& = -\frac{1}{2}\frac{\partial b_j}{\partial t}(x^\prime, t^\prime) s^\prime + O((s^\prime)^2).
\end{align*}
Now since $\frac{\partial b}{\partial t}(0) \cdot \xi^0 > 0$, choosing $V_2$ small enough,
we can assume that there exists $\rho >0$ such that
$\frac{\partial b}{\partial t}(x^\prime, t^\prime) \cdot \xi^0 > \rho |\xi^0|$ 
for every $(x^\prime, t^\prime) \in V_2$. 
%$\rho > 0$, which is going to be independent of any particular (small) $r$. 
Choose an open cone $\Gamma^\prime \subset \mathbb{R}^N \setminus 0$  containing $\xi^0$ such that $\frac{\partial b}{\partial t}(x^\prime, t^\prime) \cdot \xi > \frac{\rho}{2} |\xi|$ for every $(x,t)\in V_2$ and $\xi \in \Gamma^\prime$. Therefore, for some $\gamma > 0$,
\begin{equation*}
	\Im \psi(x^\prime, t^\prime, s) \cdot \xi \leq - \frac{\rho}{8} s |\xi|, \qquad \forall (x^\prime, t^\prime) \in V_2, \forall \xi \in \Gamma^\prime, 0 \leq s \leq \gamma.
\end{equation*}
So let $\Gamma \subset \mathbb{R}^{N + 1} \setminus 0$ be the open cone given by
\begin{align*}
	\Gamma = \{ (\xi, \tau) \in \mathbb{R}^{N + 1} \setminus 0 \; : \; \xi \in \Gamma^\prime, |\tau| < \frac{1}{\sqrt{2}} |\xi| \}.
\end{align*}
Note that if $(\xi,\tau) \in \Gamma$, then $|\xi|^2 \leq |(\xi,\tau)|^2 \leq 3/2|\xi|^2$.
Then for $(x,t) \in V_0$, $(x^\prime, t^\prime) \in V_2$, $(\xi, \tau) \in \Gamma$, with $\tau > 0$ (for the case where $\tau$ is negative we use the Stokes' Theorem with $[-\gamma, 0])$ in place of $[0, \gamma]$), and $0 \leq s \leq \gamma$, we can bound the real part of $Q(x,t, x^\prime, t^\prime, \xi, \tau, s)$ as follows: writing $Z^\prime = (Z_1, \dots, Z_N)$,
\begin{align*}
	\Re Q(x,t, x^\prime, t^\prime, \xi, \tau, s) & = \xi \cdot \Im Z^\prime(x^\prime, t^\prime, s) - \tau s \\
	& - \kappa|(\xi,\tau)|\Big(|x - \Re Z^\prime(x^\prime, t^\prime,s)|^2 + |t - t^\prime|^2 - |\Im Z^\prime(x^\prime, t^\prime, s)|^2 - s^2\Big) \\
	& = s \Im \psi(x^\prime, t^\prime, s) \cdot \xi - \tau s - \kappa|(\xi, \tau)|\Big( |x - x^\prime - s \Re \psi(x^\prime, t^\prime, s)|^2 + |t - t^\prime|^2 \Big) \\
	& + \kappa|(\xi,\tau)|\Big(|\Im Z^\prime(x^\prime, t^\prime, s)|^2 + s^2\Big) \\
	& \leq - \frac{\rho}{8}{s}^2 |\xi|  - \kappa|\xi|\Big( |x - x^\prime - s \Re \psi(x^\prime, t^\prime, s)|^2 + |t - t^\prime|^2 \Big) \\ 
	& + \frac{3 \kappa}{2}|\xi| s^2 \Big(|\Im \psi(x^\prime, t^\prime, s)| + 1 \Big).
\end{align*}
%
%We shall shrink $r$ and $\gamma$ for the last time: we shall assume that $|s^\prime| |\Re\psi(x^\prime, t^\prime, s^\prime)| < r/4$, for every $|s^\prime| \leq \gamma$, and $(x^\prime, t^\prime) \in B_{2r}(0)$.
Now we choose $\kappa > 0$ such that
\begin{align*}
	\frac{3\kappa}{2} \left(1 + \sup_{V_2 \times [-\delta, \delta]}|\Im \psi| \right)  < \rho / 16,
\end{align*}
then 
\begin{equation}\label{eq:bound_real_Q_commutaror_lenght_2}
	\Re Q(x,t, x^\prime, t^\prime, \xi, \tau, s)  \leq - \frac{\rho}{16}{s^\prime}^2 |\xi|  - \kappa|\xi|\Big( |x - x^\prime - s \Re \psi(x^\prime, t^\prime, s)|^2 + |t - t^\prime|^2 \Big) . 
\end{equation}
Now since $V_2 \Subset V$, there exist $C^\bullet > 0$ and $k_1>0$ such that, for every $0\leq s\leq \gamma$,
\begin{equation}\label{eq:distribuional_bound_u}
	\left|\left\langle \chi(x^\prime,t^\prime) u(x^\prime, t^\prime, s), \Psi(x^\prime, t^\prime, s) \right\rangle\right| \leq C^\bullet \sum_{|\alpha|\leq k_1} \sup_{(x^\prime,t^\prime) \in \overline{U_2}}|\partial_{x,t}^\alpha|\Psi(x^\prime, t^\prime, s)|,
\end{equation}
for all $\Psi\in \mathcal{C}^\infty_c(\overline{V_2})$. Also note that there exists $\epsilon_1 > 0$ such that, for every $(x,t) \in V_0$, and $(x^\prime, t^\prime) \in V_2\setminus V_1$,  $|x-x^\prime| + |t-t^\prime|) > 3\epsilon_1$. So shrinking $0 < \gamma$ if necessary, we can assume that $\epsilon_1 > \gamma\sup_{V_2\times]-\delta,\delta[}|\psi|$. Thus
\[
	|x-x^\prime - s \Re \psi(x^\prime, t^\prime, s)|^2 + |t-t^\prime|^2 \geq \epsilon_1^2,
\]
for every $(x,t) \in V_0$, and $(x^\prime, t^\prime) \in V_2\setminus V_1$. In view of \eqref{eq:bound_real_Q_commutaror_lenght_2} and \eqref{eq:distribuional_bound_u} we have that
\begin{align*}
	\left|\left\langle (\chi u)(x^\prime, t^\prime, \gamma), e^{Q(x,t,x^\prime, t^\prime, \xi,\tau, \gamma)} \det Z_{x,t}(x^\prime,t^\prime, \gamma) \right\rangle \right| & \leq \text{Const} \cdot (1+|\xi|)^{k_1} e^{-\frac{\rho\gamma^2}{16}|\xi|} \\
	& \leq \text{Const} \cdot e^{-\frac{\rho\gamma^2}{32}|\xi|}
\end{align*}
and
\begin{align*}
	\left|\int_{[0, \gamma]} \bigg\langle \big(u\mathrm{L}^\sharp(\chi)\big)(x^\prime, t^\prime, s),  e^{Q(x,t,x^\prime, t^\prime, \xi,\tau, s)}  \det Z_{x,t}(x^\prime,t^\prime, s) \bigg\rangle \mathrm{d} s \right| &  \leq \text{Const} \cdot  (1+|\xi|)^{k_1} e^{-\kappa \epsilon_1^2|\xi|} \\
	& \leq \text{Const} \cdot e^{-\frac{\kappa \epsilon_1^2}{2}|\xi|},
\end{align*}
for $\mathrm{L}^\sharp \chi(x^\prime, t^\prime) = 0$ if $(x^\prime, t^\prime) \in V_1$ and we are taking $(x,t) \in V_0$. Now recall that $\mathrm{L}^\sharp u \in \mathcal{C}^\infty_\text{flat}(]-\delta,\delta[;\mathcal{D}^\prime(V))$, therefore, there exists $m_1 \in \mathbb{N}$ such that for every $k \in \mathbb{Z}_+$, there is a constant $C_k > 0$ such that, assuming $\gamma$ small enough,
\begin{align}\label{eq:Lsharp_u}
	\bigg|\bigg\langle \big(\chi\mathrm{L}^\sharp(u)\big)(x^\prime, t^\prime, s), &  e^{Q(x,t,x^\prime, t^\prime, \xi,\tau, s)}  \det Z_{x,t}(x^\prime,t^\prime, s) \bigg\rangle \bigg| \leq \\ \nonumber
	& \leq C_k s^k \sum_{|\alpha|\leq m_1} \sup_{(x^\prime,t^\prime)\in V_2} \bigg| \partial_{x^\prime,t^\prime}^\alpha \chi(x^\prime,t^\prime)e^{Q(x,t,x^\prime, t^\prime, \xi,\tau, s)}  \det Z_{x,t}(x^\prime,t^\prime, s) \bigg|,
\end{align}
for every $ 0 \leq s \leq \gamma$. Now, in view of \eqref{eq:bound_real_Q_commutaror_lenght_2}, we obtain the following estimate
\begin{align*}
	\Bigg| \int_{[0, \gamma]} \bigg\langle \big(\chi\mathrm{L}^\sharp(u)\big)(x^\prime, t^\prime, s),  & e^{Q(x,t,x^\prime, t^\prime, \xi,\tau, s)}  \det Z_{x,t}(x^\prime,t^\prime, s) \bigg\rangle \mathrm{d} s \Bigg| \leq \\
	& \leq \int_{[0,\gamma]} C_k s^k \sum_{|\alpha|\leq m_1} \sup_{(x^\prime,t^\prime)\in V_2} \bigg| \partial_{x^\prime,t^\prime}^\alpha \chi(x^\prime,t^\prime) e^{Q(x,t,x^\prime, t^\prime, \xi,\tau, s)}  \det Z_{x,t}(x^\prime,t^\prime, s) \bigg| \mathrm{d}s \\
	& \leq \text{Const}(k,m_1) \cdot \int_0^\infty s^k |\xi|^{m_1} e^{-\frac{\rho {s}^2}{16}|\xi|} \mathrm{d} s \\
	& \leq \text{Const}(k,m_1) |\xi|^{\frac{-(k+1)}{2} + m_1},
\end{align*}
for every $k \in \mathbb{Z}_+$. Summing up, and noticing that $|\xi| \geq \frac{2}{3} |(\xi, \tau)|$ if $(\xi,\tau) \in \Gamma$, we have proved that for each $k \in \mathbb{Z}_+$ there exists $C^\sharp_k > 0$ such that
\begin{equation*}
	|\mathfrak{F}_\kappa[\chi u_0](x,t;\xi,\tau)| \leq \frac{C^\sharp_k}{|(\xi,\tau)|^k},
	\eqno{\forall (x,t)\in V_0, (\xi, \tau) \in \Gamma, |\xi| \geq 1, \tau > 0.}
\end{equation*}
As mentioned before, in order to obtain the same estimate for $\tau < 0$ it is enough to replace in \eqref{eq:Stokes1} the interval $[0, \gamma]$ by $[-\gamma, 0]$.
\end{proof}
Now let $r > 0$ and consider $\mathrm{A}(x,t)$ a $r \times r$ matrix with complex-valued, $\mathcal{C}^\infty$-smooth coefficients. Define
\begin{equation}
	\mathrm{D} = \mathrm{L} + \mathrm{A}.
\end{equation}
Now we can prove the main Theorem of this section:
\begin{thm}\label{thm:microlocal_nonhomogeneous_system}
	Let $V\subset U$ be an open neighborhood of the origin, and let $\sigma \in \mathcal{D}^\prime(V; \mathbb{C}^r)$ be a solution of $\mathrm{D}\sigma = 0$. Let $\xi^0 \in \mathbb{R}^N \setminus \{ 0 \}$, and assume that $\frac{\partial b}{\partial t}(0) \cdot \xi^0 > 0$. Then $(\xi^0, 0) \notin \mathrm{WF} (\sigma)$.
\end{thm}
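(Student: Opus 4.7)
The plan is to reduce Theorem~\ref{thm:microlocal_nonhomogeneous_system} to Theorem~\ref{thm:microlocal_approximate_solution} by producing a distributional $s$-extension $u$ of $\sigma$ to $V\times\,]-\delta,\delta[$ with $\mathrm{L}_1 u \in \mathcal{C}^\infty_{\mathrm{flat}}$. Although Theorem~\ref{thm:microlocal_approximate_solution} is stated for a scalar distribution, its proof proceeds componentwise: the vector field $\mathrm{L}^\sharp$ acts on each entry separately, and the FBI estimate is established one component at a time. Consequently the argument extends verbatim to a vector-valued $u$, and I will apply it in that form.

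The structural observation enabling the construction is that, although $\sigma$ is only a distribution, the equation $\mathrm{L}\sigma = -\mathrm{A}\sigma$ forces every iterate $\mathrm{L}^k\sigma$ to be a smooth matrix multiple of $\sigma$ itself. Define $r\times r$ smooth matrices $B_k$ on $V$ recursively by $B_0 = I$ and $B_{k+1} = \mathrm{L} B_k - B_k \mathrm{A}$. An easy induction yields $\mathrm{L}^k\sigma = B_k\sigma$ in $\mathcal{D}^\prime(V;\mathbb{C}^r)$, so in particular $(-i\mathrm{L})^k \sigma$ is a distribution of the same order as $\sigma$. I then set
\[
 u(x,t,s) \;=\; \sum_{k=0}^\infty \frac{(-i)^k B_k(x,t)\,\sigma(x,t)}{k!}\,\chi(R_k s)\, s^k ,
\]
where $\chi\in\mathcal{C}^\infty_c(]-1,1[)$ is identically $1$ near $0$ and the $R_k$ are to be chosen. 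Mimicking the proof of the smooth approximate-solution lemma, but bounding the pairing of each term against a test function $\phi$ via the finite distributional order of $\sigma$ on $\mathrm{supp}\,\phi$ together with the $\mathcal{C}^\infty$-seminorms of $B_k$ on $\mathrm{supp}\,\phi$, one can choose the $R_k$ so rapidly that the series converges in $\mathcal{C}^\infty(]-\delta,\delta[;\mathcal{D}^\prime(V;\mathbb{C}^r))$, that $u|_{s=0} = \sigma$, and that the same telescoping cancellation as in the smooth case produces $\mathrm{L}_1 u \in \mathcal{C}^\infty_{\mathrm{flat}}(]-\delta,\delta[;\mathcal{D}^\prime(V;\mathbb{C}^r))$.

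Applying Theorem~\ref{thm:microlocal_approximate_solution} componentwise to $u$ then yields $(\xi^0,0)\notin\mathrm{WF}(u^j|_{s=0}) = \mathrm{WF}(\sigma^j)$ for each $j=1,\dots,r$, whence $(\xi^0,0)\notin\mathrm{WF}(\sigma)$ as required. The main obstacle is the technical bookkeeping in the convergence step: one has to track the growth of the $\mathcal{C}^\infty$-seminorms of the iterated matrices $B_k$, control the distributional order of $B_k\sigma$ uniformly on compact sets, and then pick $R_k$ growing fast enough both to dominate this growth and to preserve the flatness of $\mathrm{L}_1 u$ at $s=0$. This is the distributional/vector-valued counterpart of the smooth approximate-solution lemma already carried out earlier in the paper, with the distributional order of $\sigma$ playing the role of the sup-norm bounds on $u_0$ used there.
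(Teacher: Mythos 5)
Your proposal is correct in principle, but it takes a genuinely different route from the paper's proof, and it is worth spelling out the contrast. The paper never constructs a distributional $s$-extension of $\sigma$ directly. Instead it applies the approximate-solution Lemma to the \emph{smooth} Cauchy data $e^1,\dots,e^r$ to obtain $\mathcal{C}^\infty$ approximate solutions $\omega^1,\dots,\omega^r$ of $\mathrm{D}_1=\partial_s+i\mathrm{D}$ with $\omega^j|_{s=0}=e^j$, and then writes $\sigma=\sigma_j M_k^j\,\omega^k$ with $M$ the smooth inverse of the frame matrix. Applying $\mathrm{D}_1$ to this identity and using $\mathrm{D}_1\sigma=0$ plus the Leibniz rule gives $\mathrm{L}_1(\sigma_j M_\ell^j)=-\sigma_j M_k^j\,\mathrm{D}^k_\ell(\mathrm{L}_1)$, and the right-hand side is automatically flat in $s$ because the $\omega^j$ are $s$-approximate solutions. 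Thus the only distributional object in the whole argument is $\sigma$ multiplied by smooth matrices; all the delicate cutoff/seminorm/$R_k$ bookkeeping is done once, for smooth data, inside the Lemma.

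Your approach by contrast extends $\sigma$ itself in $s$, exploiting the recursion $B_{k+1}=\mathrm{L}B_k-B_kA$ (so that $\mathrm{L}^k\sigma=B_k\sigma$) to keep the distributional order of the $s$-Taylor coefficients uniformly bounded. This is sound: the key structural fact that makes it work is exactly the one you isolate, namely that $\mathrm{D}\sigma=0$ turns each iterated derivative $\mathrm{L}^k\sigma$ into a smooth multiple of $\sigma$. But the price is that you must redo the cutoff-and-convergence argument in $\mathcal{C}^\infty(]-\delta,\delta[;\mathcal{D}'(V;\mathbb{C}^r))$ rather than in $\mathcal{C}^\infty(V\times\mathbb{R};\mathbb{C}^r)$: the seminorm $C(\alpha,l,m,k)$ of the Lemma has to be replaced by bounds involving the distributional order of $\sigma$ on compacts together with $\mathcal{C}^m$-norms of the $B_k$, the choice of $R_k$ has to dominate these, and the flatness of $\mathrm{L}_1 u$ has to be verified in the $\mathcal{C}^\infty_{\mathrm{flat}}(\,\cdot\,;\mathcal{D}')$ sense rather than pointwise. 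You acknowledge this as bookkeeping, and I agree it can be carried out, but it is a non-trivially heavier piece of analysis than the paper's algebraic reduction. Your observation that Theorem~\ref{thm:microlocal_approximate_solution} then applies componentwise to the vector-valued $u$ (since $\mathrm{L}_1$ acts diagonally and the FBI estimate is scalar) is correct and matches what the paper implicitly uses. In short: both routes work; the paper's confines all the analysis to smooth objects and then transfers to $\sigma$ by a single algebraic change of frame, whereas yours is more direct conceptually but requires a distributional analogue of the approximate-solution Lemma.
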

\begin{proof}
	Let $\{ e^1, \dots, e^r\}$ the canonical basis for $\mathbb{C}^r$. We set $\mathrm{D}_1 = \frac{\partial}{\partial s} + i \mathrm{D}$.
	%%%%%%%%%%%%%
	 \comment{We start by  constructing $\mathcal{C}^\infty$-smooth $s$-approximate solutions ${\omega}^1(x,t,s), \dots, {\omega}^r(x,t,s)$ for $\mathrm{D}_1$ which span $\mathbb{C}^r$ for every $(x,t,s)$ in a neighbourhood of the origin. For every $j = 1, \dots, r$ we define the formal series,
	\begin{equation*}
		\hat{\omega}^j(x,t,s) \doteq \sum_{k = 0}^\infty \frac{s^k}{k!}(-i\mathrm{D})^k e^j.
	\end{equation*}
	This family of formal series satisfies the following formal Cauchy problem:
	\begin{equation*}
		\begin{cases}
			\mathrm{D}_1 \hat{\omega}^j = 0;\\
			\hat{\omega}|_{s=0} = e^j.
		\end{cases}
	\end{equation*}
	Now let $\chi \in \mathcal{C}_c^\infty(]-1,1[)$ satisfying $\chi \equiv 1$ on $[-1/2, 1/2]$, and let $\{R_k\}_{k\in \mathbb{Z}_+}$ such that $R_k \nearrow +\infty$.
	The sequence $R_k$ will be chosen so that the series 
	\begin{equation*}
		{\omega}^j(x,t,s) \doteq \sum_{k = 0}^\infty \frac{\chi(R_k s) s^k}{k!}(-i\mathrm{D})^k e^j
	\end{equation*}
	converges in $\diffable{\infty} (U)$ for a suitable neighbourhood $U$.
	Let $U \Subset \Omega$ be a neighbourhood of the origin and set for $(\alpha, l, m, k) \in \mathbb{Z}_+^N \times \mathbb{Z}_+ \times \mathbb{Z}_+ \times \mathbb{Z}_+$ 
	\begin{equation}
		C(\alpha, l, m, k) \doteq \sup_{\substack{(x,t) \in \overline{U} \\ s \in [-1,1]}} \sum_{q = 0}^m \binom{m}{q} \frac{|\chi^{(q)}(s)|}{(k-m+q)!} \|\partial_x^\alpha \partial_t^l(-i\mathrm{D})^k e^j \|.
	\end{equation}
	Choose $R_k$ such that for every $k$, 
	\begin{equation}
		\frac{\sup_{|\alpha|+l+m \leq k}C(\alpha, l, m, k)}{R_k} \leq \frac{1}{2^k}.
	\end{equation}
	We will now show that with this choice of $R_k$, the sequence $\omega^j_n$ of 
	partial sums
	\begin{equation*}
		{\omega}^j_n(x,t,s) = \sum_{k=0}^n \frac{\chi(R_k s) s^k}{k!}(-iD)^k e^j,
	\end{equation*}
	converges to a smooth map 
	 ${\omega}^j(x,t,s)$ in $\mathcal{C}^\infty (\bar U \times \mathbb{R})$. 
	 Indeed, let $\kappa \leq n \leq  n^\prime \in \mathbb{Z}_+$, and choose $(\alpha, l, m) \in \mathbb{Z}_+^N \times \mathbb{Z}_+ \times \mathbb{Z}_+$ with $|\alpha| + l + m \leq \kappa $.
	Then
	\begin{align*}
		\partial_x^\alpha \partial_t^l \partial_s^m ({\omega}^j_{n+n^\prime}&(x,t,s) - {\omega}^j_n(x,t,s))  = \\
		&= \sum_{k=n}^{n + n^\prime} \frac{1}{k!} \bigg(\sum_{q=0}^m \binom{m}{q} R_k^q \chi^{(q)}(R_k s) \frac{k!}{(k-m+q)!} s^{k-m+q} \bigg) \partial_x^\alpha \partial_t^l (-iD)^k e^j \\
		& = \sum_{k=n}^{n + n^\prime} s^{k-m} \bigg(\sum_{q=0}^m \binom{m}{q} \frac{(R_k s)^q \chi^{(q)}(R_k s)}{(k-m+q)!} \bigg) \partial_x^\alpha \partial_t^l (-iD)^k e^j.
	\end{align*}	 
	Now we notice that $\chi(R_k s) = 0$, if $|s| \geq 1/|R_k|$, so we have that for $(x,t) \in U$ and $s \in \mathbb{R}$,
	\begin{align*}
		\big| \partial_x^\alpha \partial_t^l \partial_s^m ({\omega}^j_{n+n^\prime}&(x,t,s) - {\omega}^j_n(x,t,s)) \big| \leq \\
		& \leq \sum_{k=n}^{n + n^\prime} |s|^{k-m} \bigg(\sum_{q=0}^m \binom{m}{q} \frac{|R_k s|^q |\chi^{(q)}(R_k s)|}{(k-m+q)!} \bigg) \|\partial_x^\alpha \partial_t^l (-iD)^k e^j \| \\
		& \leq \sum_{k=n}^{n + n^\prime} \left(\frac{1}{R_k} \right)^{k-m} C(\alpha, l, m, k) \\
		& \leq \sum_{k=n}^{n + n^\prime} \frac{\sup_{|\alpha^\prime|+l^\prime + m^\prime \leq k}C(\alpha^\prime, l^\prime, m^\prime, k)}{R_k}  \\
		& \leq \sum_{k=n}^{n + n^\prime} \frac{1}{2^k}.
	\end{align*}	 
	We conclude that ${\omega}^j_n$ converges in the $\mathcal{C}^\infty$ topology on $\overline{U}\times \mathbb{R}$ to ${\omega}^j$ for $j=1,\dots, r$. 
	
	To prove that ${\omega}^j$ is an $s$-approximate solution for $\mathrm{D}_1$ we start writing
	\begin{align*}
		\mathrm{D}_1 {\omega}^j(x,t,s) &= \sum_{k=0}^\infty \frac{ R_k \chi^\prime (R_k s)s^{k}}{k!}(-i\mathrm{D})^k e^j + \sum_{k=1}^\infty \frac{\chi^\prime(R_k s)k s^{k-1}}{k!} (-i\mathrm{D})^k e^j \\
		& - \sum_{k=0}^\infty \frac{\chi(R_k s) s^k}{k!}(-i\mathrm{D})^{k+1} e^j \\
		& = \sum_{k=0}^\infty \frac{s^k}{k!}G_k(x,t,s),
	\end{align*}
	where $G_k(x,t,s) = R_k \chi^\prime(R_k s)(-i\mathrm{D})^k e^j + (\chi(R_{k+1} s) - \chi(R_k s))(-i\mathrm{D})^{k+1} e^j$, and in particular we have that $\partial_s^l G_k(x,t,0) = 0$, for every $l \in \mathbb{Z}_+$. Thus $\partial_s^\ell \mathrm{D}_1 {\omega}^j\big|_{s=0} = 0$, for every $\ell \in \mathbb{Z}_+$, and this means that ${\omega}^j$ is an $s$-approximate solution for $\mathrm{D}_1$, for every $j =1, \dots,r$.
	}
	%%%%%%%%%%%%%%%%%%%%%%
Let $V \Subset U$ be an neighborhood of the origin, and let ${\omega}^1(x,t,s), \dots, {\omega}^r(x,t,s)$ be $\mathcal{C}^\infty$-smooth approximate solutions for $\mathrm{D}_1$ on $V$, such that $\omega^j(x,t,0) = e^j$, for $j = 1, \dots, r$. Let $\delta>0$ be small enough so 
the vectors  ${\omega}^1(x,t,s), \dots, {\omega}^r(x,t,s)$ span $\mathbb{C}^r$ on $V \times ]-\delta , \delta[$. Therefore we can write $e^j = M_\ell^j \omega^\ell $, in $V \times]-\delta,\delta[$, where $M_\ell^j$ is an invertible matrix with $\mathcal{C}^\infty$-coeficients. Let $\sigma \in \mathcal{D}^\prime(V;\mathbb{C}^r)$ be a solution of $D\sigma =0$, \textit{i.e.} $\sigma = \sigma_j e^j$, where $\sigma_j \in \mathcal{D}^\prime(V)$. So we can write $\sigma = \sigma_j M_k^j \omega^k$. Now since $D_1\sigma = 0$, we have that
\begin{align*}
	(\mathrm{L}_1 \sigma_j M_k^j) \omega^k & = -  \sigma_j M_k^j \mathrm{D}_1 \omega^k\\
	& = - \sigma_j M_k^j \mathrm{D}_\ell^k(\mathrm{L}_1) \omega^\ell,
\end{align*}
where $\mathrm{D}_1 \omega^k =  \mathrm{D}_\ell^k(\mathrm{L}_1) \omega^\ell$. Since $\{\omega^1, \dots, \omega^r\}$ is a frame in $V\times]-\delta,\delta[$, we have that
\begin{equation*}
	(\mathrm{L}_1 \sigma_j M_\ell^j) = -  \sigma_j M_k^j \mathrm{D}_\ell^k(\mathrm{L}_1),
\end{equation*}
and since $\omega^j$ is an $s$-approximate solution for $\mathrm{D}_1$, we have that $\mathrm{D}_\ell^k(\mathrm{L}_1)$ is flat at $s=0$, therefore $(\mathrm{L}_1 \sigma_j M_\ell^j) \in \mathcal{C}_\text{flat}^\infty(]-\delta, \delta[; \mathcal{D}^\prime(V))$, for every $\ell = 1, \dots,r$, and $\sigma_j M_\ell^j\big|_{s=0} = \sigma_\ell$. By Theorem \ref{thm:microlocal_approximate_solution} we have that $(\xi^0,0) \notin \mathrm{WF}(\sigma_\ell)$, for every $\ell=1,\dots,r$, thus $(\xi^0,0) \notin \mathrm{WF}(\sigma)$.		 
\end{proof}
%
%
%%%%%%%%%%%%%%%%%%%%%%%%%%%%%%%%
%%%%%%%%%%%%%%%%%%%%%%%%%%%%%%%%
%
\section{Proof of Theorem \ref{thm:levi_microlocal_regularity}}
%
%%%%%%%%%%%%%%%%%%%%%%%%%%%%%%%%
%%%%%%%%%%%%%%%%%%%%%%%%%%%%%%%%
%
Let $p \in \Omega$ and let $\sigma \in \mathrm{T}^0_p$ be such that the Levi form in $\sigma$ has one negative eigenvalue. Therefore there exist $U \subset \Omega$ an open neighborhood of $p$, and a section $\mathrm{L} \in \Gamma(U, \mathcal{V})$ such that $\sigma([\mathrm{L}, \overline{\mathrm{L}}]_p)/2i < 0$. Now since $[-i\mathrm{L}, i\overline{\mathrm{L}}] = [\mathrm{L}, \overline{\mathrm{L}}]$, we can assume, shrinking $U$ if necessary, that $\Re \mathrm{L} \neq 0$ on $U$. Rectifying the $\Re \mathrm{L}$, we can find coordinates on $U$, $(x_1, \dots, x_N, t)$, vanishing on $p$, such that in this coordinate $\mathrm{L}$ is written as
\begin{equation*}
	\mathrm{L} = \frac{\partial}{\partial t} + i b(x,t)\cdot \frac{\partial}{\partial x},
\end{equation*}
where the map $b(x,t) = (b_1(x,t), \dots, b_N(x,t))$ is real-valued and $\mathcal{C}^\infty$-smooth. Since $\sigma \in \mathrm{T}^0_p = \mathrm{T}^\prime_0\cap\mathbb{C}T_p^\ast\Omega$, we have that $\sigma (\mathrm{L}_p) = 0$, which implies that, in the coordinate system $(x,t)$, $ \sigma = (\xi^0, 0)$, for some $\xi^0 \in \mathbb{R}^N \setminus \{0\}$. So the condition $\sigma([\mathrm{L}, \overline{\mathrm{L}}]_p)/2i < 0$ translates to $\frac{\partial b(0)}{\partial t}\cdot \xi^0 > 0$. Now let $u$ be a distributional section solution on $U$ (here we allow $U$ to be as small as necessary). Let $\{f^1, \dots, f^r$\} be a frame of $E$ on $U$. In the same notation of section \ref{sec:V_bundles} we have that $u$ is a solution of
\begin{equation*}
	\big(\mathrm{L} + D(\mathrm{L})\big) u = 0, 
\end{equation*}
where $D(\mathrm{L})$ is the connection matrix of $D_\mathrm{L}$ with respect to the frame $\{f^1, \dots, f^r\}$. Therefore Theorem \ref{thm:levi_microlocal_regularity} follows from Theorem \ref{thm:microlocal_nonhomogeneous_system}.

\end{document}